\documentclass[11pt,reqno]{amsart}
%\input{macros}

%-----------------------------------------------------------------------------------------------------------------------------------------------------------------%-----------------------------------------------------------------------------------------------------------------------------------------------------------------%-----------------------------------------------------------------------------------------------------------------------------------------------------------------

\usepackage{comment}
\usepackage[margin=1.3in]{geometry}
\usepackage{calc, xcolor, mathrsfs, mathtools, enumitem}
%\mathsurround=3pt
%\parindent=20pt

\usepackage{tikz, amsfonts, amsmath, amsthm, amssymb, physics}
%\usetikzlibrary{fadings}
%\usetikzlibrary{patterns}
%\usetikzlibrary{shadows.blur}
%\usetikzlibrary{shapes}
%\usepackage{lipsum}

%\usepackage[nodisplayskipstretch]{setspace}
%\setstretch{1.2}

%\usepackage{ifmtarg} %To-DO tags
%\makeatletter
%  \newcommand{\TODO}[1]{\@ifmtarg{#1}{\textcolor{red}{\emph{\textbf{TODO}}}~}{\emph{\textcolor{red}{\textbf{TODO:}~#1}~}}}
%\makeatother

%\usepackage[notref,notcite]{showkeys} %To show reference of theorems in margins

%"Stackable" tilde: 
\usepackage{stackengine}
\stackMath
\newcommand\ntilde[2][2]{%
 \def\useanchorwidth{T}%
  \ifnum#1>1%
    \stackon[-.5pt]{\ntilde[\numexpr#1-1\relax]{#2}}{\scriptscriptstyle\sim}%
  \else%
    \stackon[.5pt]{#2}{\scriptscriptstyle\sim}%
  \fi%
}
%\ntilde[k]{X} gives X with k tilde stacked on top. We only really need k=1 and k=2. So we write the following shortcuts:
\newcommand{\stilde}{\ntilde[1]} %for single tilde
\newcommand{\dtilde}{\ntilde[2]} %for double tilde 
\DeclareMathOperator{\Image}{Im} %for image

\usepackage{hyperref}

%For labelled matrices
%\newcommand\coolover[2]{\mathrlap{\smash{\overbrace{\phantom{%
%    \begin{matrix} #2 \end{matrix}}}^{\mbox{$#1$}}}}#2}
%\newcommand\coolunder[2]{\mathrlap{\smash{\underbrace{\phantom{%
%    \begin{matrix} #2 \end{matrix}}}_{\mbox{$#1$}}}}#2}
%\newcommand\coolleftbrace[2]{%
%#1\left\{\vphantom{\begin{matrix} #2 \end{matrix}}\right.}
%\newcommand\coolrightbrace[2]{%
%\left.\vphantom{\begin{matrix} #1 \end{matrix}}\right\}#2}

%\theoremstyle{plain}
\theoremstyle{definition}
\newtheorem{thm}{Theorem}[section]
\newtheorem{lem}[thm]{Lemma}
\newtheorem{prop}[thm]{Proposition}

\newtheorem{rem}[thm]{Remark}
\newtheorem{defn}[thm]{Definition}
\newtheorem{problem}{Problem}
%\newtheorem{rem}{Remark}
%\newtheorem*{rem*}{Remark}
%\usepackage[backend=biber,style=alphabetic,sorting=ynt]{biblatex}
%\addbibresource{citations(1).bib}

\allowdisplaybreaks

%-----------------------------------------------------------------------------------------------------------------------------------------------------------------%-----------------------------------------------------------------------------------------------------------------------------------------------------------------%-----------------------------------------------------------------------------------------------------------------------------------------------------------------%-----------------------------------------------------------------------------------------------------------------------------------------------------------------

\begin{document}

\title[Restricted invertibility of continuous matrix functions]{Restricted invertibility of continuous matrix functions}

%\author{Fan, Montemurro, Motakis, Praveen, Rusonik, Skoufranis, Tobin}

%-----------------------------------------------------------------------------------------------------------------------------------------------------------------

\author[Fan et al.]{Adrian Fan}

\address{A. Fan, Department of Mathematics, Evans Hall, 3840, University of California, Berkeley, CA 94720-3840}

\email{adrianfan@berkeley.edu}

%-----------------------------------------------------------------------------------------------------------------------------------------------------------------

\author[]{Jack Montemurro}

\address{J. Montemurro, Department of Mathematics, University of Toronto, Toronto, M5S 2E4, Canada.}

\email{jack.montemurro@mail.utoronto.ca}

%-----------------------------------------------------------------------------------------------------------------------------------------------------------------

\author[]{Pavlos Motakis}

\address{P. Motakis, Department of Mathematics and Statistics, York University, 4700 Keele Street, Toronto, Ontario, M3J 1P3, Canada}

\email{pmotakis@yorku.ca}

%-----------------------------------------------------------------------------------------------------------------------------------------------------------------

\author[]{Naina Praveen}

\address{N. Praveen, Department of Mathematics, Ashoka University, Rajiv Gandhi Education City, P.O.Rai, Sonepat, Haryana 131 029, India.}

\email{naina.praveen@alumni.ashoka.edu.in}

%-----------------------------------------------------------------------------------------------------------------------------------------------------------------

\author[]{Alyssa Rusonik}

\address{A. Rusonik, Department of Mathematics, University of Toronto, Toronto, M5S 2E4, Canada.}

\email{alyssa.rusonik@mail.utoronto.ca}

%-----------------------------------------------------------------------------------------------------------------------------------------------------------------

\author[]{Paul Skoufranis}

\address{P. Skoufranis, Department of Mathematics and Statistics, York University, 4700 Keele Street, Toronto, Ontario, M3J 1P3, Canada}

\email{pskoufra@yorku.ca}

%-----------------------------------------------------------------------------------------------------------------------------------------------------------------

\author[]{Noam Tobin}

\address{N. Tobin, Department of Mathematics and Statistics, York University, 4700 Keele Street, Toronto, Ontario, M3J 1P3, Canada}

\email{nsnow1@my.yorku.ca}

%-----------------------------------------------------------------------------------------------------------------------------------------------------------------

\thanks{
This research was conducted during a Fields Undergraduate Summer Research Program in 2021 and the first, second, fourth, and fifth authors were supported by this program. The third author was supported by NSERC Grant RGPIN-2021-03639. The sixth author was supported by NSERC Grant RGPIN-2017-05711. The seventh author was supported by an Undergraduate Student Research Award (NSERC)}

\date{\today}

%\pagenumbering{arabic}

\begin{abstract}
Motivated by an influential result of Bourgain and Tzafriri, we consider continuous matrix functions $A:\mathbb{R}\to M_{n\times n}$ and lower $\ell_2$-norm bounds associated with their restriction to certain subspaces. We prove that for any such $A$ with unit-length columns, there exists a continuous choice of subspaces $t\mapsto U(t)\subset \mathbb{R}^n$ such that for $v\in U(t)$, $\|A(t)v\|\geq c\|v\|$ where $c$ is some universal constant. Furthermore, the $U(t)$ are chosen so that their dimension satisfies a lower bound with optimal asymptotic dependence on $n$ and $\sup_{t\in \mathbb{R}}\|A(t)\|.$ We provide two methods. The first relies on an orthogonality argument, while the second is probabilistic and combinatorial in nature. The latter does not yield the optimal bound for $\dim(U(t))$ but the $U(t)$ obtained in this way are guaranteed to have a canonical representation as joined-together spaces spanned by subsets of the unit vector basis.
\end{abstract}

\maketitle

\section{Introduction}
This paper concerns itself with continuous matrix functions $A:\mathbb{R}\to M_{n\times n}$ restricted to linear subspaces and certain lower $\ell_2$-norm bounds satisfied on these subspaces. It is motivated by specific results in the non-continuous (static) setting.  Inspired by problems in harmonic analysis and the geometry of Banach spaces, a seminal 1987 article by Bourgain and Tzafriri (\cite{bourgain:tzafriri:1987}) proved the following.

\begin{thm}[\cite{bourgain:tzafriri:1987}] \label{OGBT}
There exist universal constants $c_0,d_0>0$ such that for any $A\in M_{n \times n}(\mathbb{R})$ with $\|Ae_i\|=1$ for $1\leq i\leq n$ there exists $\sigma \subset \{1,\dotsc, n\}$ with $|\sigma|\geq d_0{n\|A\|^{-2}}$ such that for any set of scalars $\{a_j\}_{j \in \sigma}$,
\begin{equation}
\label{OGBTeq1}
\norm{\sum_{j \in \sigma} a_jAe_j}_2 \ge c_0 \left(\sum_{j \in \sigma} |a_j|^2\right)^{1/2}.
\end{equation} 
Equivalently,  for $U_\sigma=\langle e_j : j\in\sigma \rangle$ and for any $v\in U_\sigma$, 
$\|A v\|\geq c_0\|v\|.$
\end{thm}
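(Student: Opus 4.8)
The plan is to recast the statement as a spectral problem about principal submatrices of the Gram matrix and then to extract the index set $\sigma$ by a greedy selection guided by a barrier potential, in the style of Spielman and Srivastava's elementary proof of restricted invertibility. Write $v_i=Ae_i\in\mathbb{R}^n$, so that $\|v_i\|=1$ and $\sum_{i=1}^n v_iv_i^{\mathsf T}=AA^{\mathsf T}$, whence $\|\sum_i v_iv_i^{\mathsf T}\|=\|A\|^2$. For a subset $\sigma$, the inequality \eqref{OGBTeq1} for all scalars $\{a_j\}_{j\in\sigma}$ is equivalent to the principal submatrix $(A^{\mathsf T}A)_\sigma=(\langle v_i,v_j\rangle)_{i,j\in\sigma}$ having least eigenvalue at least $c_0^2$, equivalently to $\sum_{i\in\sigma}v_iv_i^{\mathsf T}$ having least eigenvalue $\ge c_0^2$ on its range. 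If $n\le 4\|A\|^2$ the conclusion is immediate (take $\sigma=\{1\}$, using $\|v_1\|=1$ and $n\|A\|^{-2}\le 4$), so we may assume $k:=\lfloor n/(4\|A\|^2)\rfloor\ge 1$ and aim for $|\sigma|=k$; note $k$ is a fixed fraction of the stable rank $\|A\|_F^2/\|A\|^2=n/\|A\|^2$, the ratio the argument must track.

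For a symmetric matrix $M$ and a scalar $b$ with $bI\prec M$, define the lower potential $\Phi_b(M)=\operatorname{tr}\big((M-bI)^{-1}\big)$; a bound $\Phi_b(M)\le\Phi_0$ forces $\lambda_{\min}(M)-b\ge\Phi_0^{-1}>0$. I would build $\sigma=\{i_1,\dots,i_k\}$ one index at a time, maintaining strictly increasing barriers $b_0<b_1<\cdots<b_k$ together with the invariant $\Phi_{b_j}(M_j)\le\Phi_0$, where $M_j=\sum_{\ell\le j}v_{i_\ell}v_{i_\ell}^{\mathsf T}$, $M_0=0$, and $b_0<0$ is chosen so that $\Phi_{b_0}(0)=n/(-b_0)=\Phi_0$. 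Two rank-one identities drive the inductive step. By Sherman--Morrison, adding $v_iv_i^{\mathsf T}$ changes the potential by
\[
\Phi_{b}(M+v_iv_i^{\mathsf T})=\Phi_{b}(M)-\frac{v_i^{\mathsf T}(M-bI)^{-2}v_i}{1+v_i^{\mathsf T}(M-bI)^{-1}v_i},
\]
which is a decrease, while raising the barrier from $b$ to $b+\delta$ (keeping $(b+\delta)I\prec M$) increases it by $\int_b^{b+\delta}\operatorname{tr}\big((M-sI)^{-2}\big)\,ds$. So at step $j$ it suffices to find a new index $i$ whose addition compensates a barrier bump $\delta_j$.

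The compensating index is produced by averaging over all candidates using the isotropy-type identity $\sum_{i=1}^n v_iv_i^{\mathsf T}=AA^{\mathsf T}\preceq\|A\|^2 I$:
\[
\sum_{i=1}^n v_i^{\mathsf T}(M-bI)^{-2}v_i=\operatorname{tr}\!\big((M-bI)^{-2}AA^{\mathsf T}\big)\le\|A\|^2\operatorname{tr}\!\big((M-bI)^{-2}\big),
\]
and likewise $\sum_{i=1}^n v_i^{\mathsf T}(M-bI)^{-1}v_i\le\|A\|^2\,\Phi_b(M)\le\|A\|^2\Phi_0$. These two bounds let one choose an increment $\delta_j$ (depending only on $\|A\|^2$ and the current potential) and an index $i$ for which the drop in the first display at least matches the rise caused by the bump, so the invariant survives. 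Iterating $k$ times, the accumulated barrier satisfies $b_k-b_0=\sum_{j=1}^k\delta_j$; with $b_0$ and $\Phi_0$ calibrated against $k\le n/(4\|A\|^2)$ this yields $b_k\ge 1/4$. Hence $\sum_{i\in\sigma}v_iv_i^{\mathsf T}$ has least eigenvalue $\ge 1/4$ on its range, which is \eqref{OGBTeq1} with $c_0=1/2$ and $d_0=1/4$ (uniformly, since the excluded case was handled separately).

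The main obstacle is the bookkeeping inside the averaging step: the averages control the numerator $v_i^{\mathsf T}(M-bI)^{-2}v_i$ and the denominator $1+v_i^{\mathsf T}(M-bI)^{-1}v_i$ \emph{separately}, whereas one needs a single index that is simultaneously good in the ratio; making this precise, and choosing the schedule $\{\delta_j\}$ so that the final barrier lands at an absolute positive constant, is exactly where the optimal dependence $k\asymp n\|A\|^{-2}$ (and the shape $(1-\sqrt{k\|A\|^2/n})^2$ of the least-eigenvalue bound) is forced, and it is the delicate heart of the argument. An alternative is Bourgain and Tzafriri's original probabilistic route: take $\sigma$ to be a random $\delta$-selection with $\delta\asymp\|A\|^{-2}$, bound $\mathbb{E}\,\|(A^{\mathsf T}A-I)_\sigma\|$ via a noncommutative Khintchine/Rudelson-type inequality (using that each row of $A^{\mathsf T}A-I$ has $\ell_2$-norm $\le\|A\|$), and then derandomize via Markov's inequality; there the obstacle is a logarithmic loss in $|\sigma|$ that must be removed by an additional iteration. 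I would use the barrier proof, as it is elementary and delivers the optimal dimension directly.
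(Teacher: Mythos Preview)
The paper does not supply its own proof of Theorem~\ref{OGBT}; it is quoted from \cite{bourgain:tzafriri:1987} and used as a black box in Section~\ref{optimal}. What the paper \emph{does} write out (Lemmata~\ref{BTstep1}--\ref{BTstep3} in Section~\ref{BT-proof}) is the original Bourgain--Tzafriri probabilistic argument, adapted to yield the relative version Theorem~\ref{BTbigthm}: random $\delta$-sparsification with $\delta\asymp\|A\|^{-2}$ combined with Bernstein and the Hilbert--Schmidt rank bound $\|P_WA\|_{\mathrm{HS}}^2\le\|A\|^2\operatorname{rank}(P_W)$ to control projection norms, then Sauer--Shelah to pass from an averaged statement to a large subset with an $\ell_1$ lower bound, then Khintchine to upgrade to $\ell_2$. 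This is essentially the ``alternative'' you describe at the end (though the Sauer--Shelah/Khintchine route has no logarithmic loss to remove; the log you mention is a feature of the Rudelson noncommutative-Khintchine variant, not of Bourgain--Tzafriri). So your primary proposal---the Spielman--Srivastava barrier method from \cite{spielman:srivastava:2012}---is a genuinely different route from anything the paper carries out. The barrier method is deterministic and delivers the sharp constants, while the probabilistic argument is what the paper actually needs because it adapts naturally to finding $\sigma_2$ disjoint from a prescribed $\sigma_1$ with a lower bound on all of $U_{\sigma_1\cup\sigma_2}$.

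There is, however, a real slip in your barrier outline. You impose $b_jI\prec M_j$ throughout, but $M_j=\sum_{\ell\le j}v_{i_\ell}v_{i_\ell}^{\mathsf T}$ is an $n\times n$ matrix of rank at most $j<n$, so $0\in\operatorname{spec}(M_j)$ for every $j\le k$ and your invariant forces $b_j<0$ at every step; the claimed endpoint ``$b_k\ge 1/4$'' is therefore impossible under your own hypotheses, and the potential bound $\Phi_{b_k}(M_k)\le\Phi_0$ with $b_k<0$ yields nothing about $\lambda_k(M_k)$. In the actual Spielman--Srivastava proof the barrier is allowed to sit strictly between $\lambda_{j+1}(M_j)=0$ and $\lambda_j(M_j)$, so the resolvent $(M_j-b_jI)^{-1}$ exists but is indefinite; the Sherman--Morrison update and the averaging step then have to be carried out under the weaker hypothesis that $b_j$ merely avoids the spectrum, and the selection criterion must also guarantee that the new vector is linearly independent of the previous ones so that exactly one zero eigenvalue lifts above $b_{j+1}$. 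This is not just the ``bookkeeping'' you flag---it is a structural change in the invariant, without which the scheme as written cannot produce a positive lower bound.
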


For the remainder of the paper, let $c_0$ and $d_0$ be a pair of constants satisfying the conclusion of Theorem \ref{OGBT}. We point out that a dimensional estimate of order ${n\|A\|^{-2}}$ is optimal up to a constant (see Remark \ref{optimal dimension}). Intuitively, the Bourgain-Tzafriri result guarantees the existence of a ``large'' subspace of $\mathbb{R}^n $ on which $A$ does not shrink vectors ``excessively.'' The above result implies that $A$ is invertible when restricted to $U$ which was the original motivation for \cite{bourgain:tzafriri:1987}, and hence the term restricted invertibility. The $U$ from the above theorem is a particularly simple type of subspace namely one spanned by a subset of standard basis vectors.

Theorem \ref{OGBT} from \cite{bourgain:tzafriri:1987} and subsequent work of Bourgain and Tzafriri (\cite{bourgain:tzafriri:1989}, \cite{bourgain:tzafriri:1991}) are strongly related to the famous Kadison-Singer conjecture \cite{kadison:singer:1959}. This was a central problem in $C^*$-algebras that was restated by Anderson in \cite{anderson:1979} as a problem about matrices and it was proved by Casazza and Tremain in \cite{casazza:tremain:2009} that a certain statement that is related to Theorem \ref{OGBT} is equivalent to this conjecture. Within this context (and others), Theorem \ref{OGBT} has been studied, reproved, and generalized many times including results by Vershynin in \cite{vershynin:2001}, by Spielman and Srivastava in \cite{spielman:srivastava:2012} (who showed that for $0<\varepsilon<1$ one can choose $c_0 = \varepsilon^2$ and $d_0 = (1-\varepsilon)^2$), and by Naor and Youssef in \cite{naor:youssef:2017}. Using some of the techniques developed in \cite{spielman:srivastava:2012}, Marcus, Spielman, and Srivastava eventually solved the Kadison-Singer conjecture in \cite{marcus:spielan:srivastava:2015}.

The preservation of lower $\ell_2$-norm bounds on subspaces is also related to a problem from infinite-dimensional Banach space theory, namely the factorization property of bounded linear operators with large diagonal. This problem has its origins in Andrew's paper \cite{andrew:1979} and it has been further developed by Laustsen, Lechner, and M\"uller in \cite{laustsen:lechner:mueller:2015}, by Lechner in \cite{lechner:2018:factor-mixed}, by Lechner, M\"uller, Schlumprecht, and the third author in \cite{lechner:motakis:mueller:schlumprecht:2020} and \cite{lechner:motakis:mueller:schlumprecht:2021}, and others. In the finite-dimensional Euclidean setting, the problem can be stated as follows: Given $n\in\mathbb{N}$ and $\theta>0$, determine $C>0$ and $m\in\mathbb{N}$ such that every norm-one $n\times n$ matrix $A =(a_{i,j})$ with $\min|a_{i,i}|\geq \theta$ is a $C/\theta$-factor of the $m\times m$ identity matrix $I_m$. This means that there exist matrices $L$ and $R$ of appropriate dimension such that $\|L\|\|R\|\leq C/\theta$ and $LAR = I_m$. Theorem  \ref{OGBT} guarantees that one may take $C =  c_0$ and $m \geq d_0 n\theta^2$. The continuous version of this problem was investigated by Dai, Hore, Jiao, Lan, and the third author in \cite{dai:hore:jiao:lan:motakis:2020} where non-optimal estimates were given.

We turn out attention towards formulating a version of Theorem \ref{OGBT}  in the setting of a continuous matrix function $A:\mathbb{R}\to M_{n \times n} $. A point-wise application of Theorem \ref{OGBT} gives that there exists a choice of subspaces of constant dimension $U(t)\subset \mathbb{R}^n$, for all $t\in\mathbb{R}$ such that $v\in U(t)$, $\|A(t)v\|\geq c\|v\|$.  However, the collection $\{U(t)\}_{t\in\mathbb{R}}$ is not a priori known to satisfy any useful properties. The property of focus in this paper is continuity. We say that a collection $\{U(t)\}_{t\in \mathbb{R}}$ of subspaces varies continuously or is a continuous collection of subspaces if the matrix function $P:\mathbb{R}\to M_{n\times n}$, assigning to each $t\in \mathbb{R}$ the orthogonal projection $P(t)$ onto $U(t)$, is continuous (see Definition \ref{continuous spaces notions} and Theorem \ref{projection yields basis}). We focus on generalizing Theorem \ref{OGBT} by addressing the following main problem:

\begin{problem}\label{mainquestion}
Given a continuous matrix function $A: \mathbb R \to M_{n \times n} (\mathbb R)$ satisfying pointwise the same hypotheses as those in Theorem \ref{OGBT}, is it possible to find subspaces $\{U(t)\}_{t \in \mathbb{R}}$ that vary continuously  satisfying a similar lower $\ell_2$-norm bound? If so, are there obtainable bounds on $\dim(U(t))$? 
\end{problem}

We use techniques from linear algebra, analysis, and combinatorics, and employ a probabilistic construction to obtain the desired collection $\{U(t)\}_{t\in\mathbb{R}}$ with a lower bound of the dimension which is indeed optimal up to a constant.  Closely related to Problem  \ref{mainquestion},  is the following problem considered in \cite{dai:hore:jiao:lan:motakis:2020}.

\begin{problem}\label{equivalentquestion}
Given $n\in\mathbb{N}$ and $\theta >0$, determine $C>0$ and $m\in\mathbb{N}$ such that for every continuous matrix function $A = (a_{i,j}): \mathbb R \to M_{n  \times n}$ with $\|A(t)\| \le 1$ and $\min_i|a_{i,i}(t)|\geq \theta$ for every $t\in\mathbb{R}$ there exists continuous matrix functions $L: \mathbb R \to M_{m \times n}$, $R: \mathbb R \to M_{n \times m}$ so that $LAR = I_{m}$ and $\sup_t\|L(t)\|\|R(t)\| \leq C/\theta$.
\end{problem}

In Theorem \ref{C-factor_min-stretch} we will show that a solution to the first problem also offers one for the second.  We will provide two approaches to solving these problems.  The first one, presented in Section \ref{optimal}, allows us to recover the (up to universal constants) optimal bound:

\begin{thm}
\label{finalresultstatement}
Let $A: \mathbb{R} \rightarrow M_{n \times n}$ be a continuous matrix function, satisfying the property that for all $t \in \mathbb{R}$, $\|A(t)e_i\|=1$ for every  $ 1 \le i \le n$.  Let $\Lambda = \sup_t \|A(t)\|$ and $\gamma\in(0,1)$. Then, there exists a continuous family of $m$-dimensional subspaces $\{U(t)\}_{t \in \mathbb R}$ where $m \ge (d_0 n/7\Lambda^2)$ such that for every $t \in \mathbb{R},$ and every $v \in U(t)$,  $\|Av\| \geq \gamma c_0 \|v\|$.
\end{thm}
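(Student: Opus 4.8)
The plan is to pass from the pointwise Bourgain--Tzafriri theorem to a continuous selection by a local-to-global gluing argument, using a partition of unity together with a compactness/finiteness input coming from the fact that there are only finitely many subsets of $\{1,\dots,n\}$. Fix $t_0 \in \mathbb{R}$. By Theorem \ref{OGBT} applied to $A(t_0)$ there is $\sigma_0 \subset \{1,\dots,n\}$ with $|\sigma_0| \ge d_0 n \Lambda^{-2}$ such that $\|A(t_0) v\| \ge c_0 \|v\|$ for all $v \in U_{\sigma_0} = \langle e_j : j \in \sigma_0\rangle$. Since $A$ is continuous and the lower bound $\inf_{v \in U_{\sigma_0}, \|v\|=1}\|A(t)v\|$ is the square root of the smallest eigenvalue of $P_{\sigma_0}A(t)^{*}A(t)P_{\sigma_0}$ restricted to $U_{\sigma_0}$ (a continuous function of $t$), there is an open interval $I_{t_0} \ni t_0$ on which $\|A(t)v\| \ge \gamma c_0 \|v\|$ for all $v \in U_{\sigma_0}$; call this the \emph{good interval} for the label $\sigma_0$. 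Doing this for every $t_0$ produces an open cover of $\mathbb{R}$, and attached to each interval is one of the finitely many subsets $\sigma$ of size at least $d_0 n \Lambda^{-2}$.

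**Next** I would want to glue these locally constant choices $\sigma$ into a \emph{continuously varying} family of subspaces. The obstruction is that as $t$ moves from one good interval to an overlapping one with a different label $\sigma \ne \sigma'$, the subspaces $U_\sigma$ and $U_{\sigma'}$ may be genuinely different, so we cannot simply declare $U(t) = U_{\sigma(t)}$. The remedy is to build $U(t)$ on overlaps by \emph{rotating between} the finitely many candidate coordinate subspaces using a subordinate partition of unity. Concretely, I expect the paper to pick a locally finite refinement and a partition of unity $\{\varphi_k\}$ with $\operatorname{supp}\varphi_k \subset I_k$ where $I_k$ carries label $\sigma_k$, and then to define a continuous frame/basis $t \mapsto (u_1(t),\dots,u_m(t))$ by interpolating: on a region where only $\sigma_k$ and $\sigma_{k+1}$ are active, rotate an orthonormal basis of $U_{\sigma_k}$ to one of $U_{\sigma_{k+1}}$ along a path of isometries, reparametrized by $\varphi_{k+1}$. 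The constant $7$ and the precise dimension $m \ge d_0 n/(7\Lambda^2)$ presumably come from how much one must shrink $m$ to guarantee that \emph{all} the candidate subspaces appearing near a given $t$ can be simultaneously handled --- e.g. intersecting finitely many of the $\sigma$'s along a chain of overlaps, or controlling how the interpolation degrades the lower bound. This is the step I expect to be the main obstacle: ensuring the interpolated subspace $U(t)$ still satisfies $\|A(t)v\| \ge \gamma c_0 \|v\|$ and has dimension bounded below by a fixed $m$, uniformly in $t$, while remaining continuous.

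**To make the interpolation legitimate** I would use the tools flagged in the excerpt: Definition \ref{continuous spaces notions} and Theorem \ref{projection yields basis}, which say that a continuous family of subspaces is exactly one whose orthogonal projections $P(t)$ vary continuously, and that such a family admits a continuous orthonormal basis. So it suffices to produce a continuous projection-valued map $t \mapsto P(t)$ of constant rank $m$ with $\operatorname{Range}P(t) \subset \{v : \|A(t)v\| \ge \gamma c_0\|v\|\}$. One clean way: on each $I_k$ we have the constant projection $P_{\sigma_k}$; on overlaps patch via $P(t) = W_k(t) P_{\sigma_k} W_k(t)^{*}$ where $W_k(t)$ is a continuous path of orthogonal matrices built from $\varphi_k$. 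The lower bound is preserved as long as the total rotation keeps the range inside the (open, hence robust under small $\gamma$-loss) good cone of $A(t)$; the factor $\gamma \in (0,1)$ gives exactly the slack needed to absorb the interpolation error, and choosing the good intervals small enough (depending on $\gamma$) makes this work. The dimension count is then bookkeeping over the finite list of admissible $\sigma$'s.

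**Finally** I would note that the hypothesis $\|A(t)e_i\| = 1$ for all $i,t$ is only used pointwise through Theorem \ref{OGBT}, so no uniformity beyond $\Lambda = \sup_t\|A(t)\| < \infty$ is needed; and that the passage from $\mathbb{R}$ to the construction is legitimate because $\mathbb{R}$ is paracompact, so partitions of unity subordinate to any open cover exist. The main technical risk is getting a \emph{single} value of $m$ that works globally --- I would handle this by taking $m = \lceil d_0 n/(7\Lambda^2)\rceil$ from the start and, wherever Theorem \ref{OGBT} hands back a larger $\sigma$, simply discarding coordinates to cut down to exactly $m$ (the lower bound is inherited by coordinate subspaces), so that every label in play has the same size and the ranks match up across all overlaps.
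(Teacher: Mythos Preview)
Your outline correctly identifies the skeleton --- discretize, apply Theorem \ref{OGBT} locally, then stitch --- but the crucial step, the stitching itself, is where your argument has a genuine gap. You propose to rotate from $U_{\sigma_k}$ to $U_{\sigma_{k+1}}$ along a path of orthogonal matrices and assert that the slack $\gamma<1$ absorbs the ``interpolation error''. This does not work: the intermediate subspaces along an arbitrary rotation from $U_{\sigma_k}$ to $U_{\sigma_{k+1}}$ can intersect $\ker A(t)$ (the paper flags exactly this danger in the introduction), and the discrepancy is governed by the angle between the two coordinate subspaces, which is in general of order one and has nothing to do with the continuity modulus of $A$. No choice of small good intervals fixes this, because the subspaces $U_{\sigma_k}$ themselves do not move with $t$.

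What the paper actually does is engineer the pair of subspaces so that the interpolation is harmless. At a point $s_i$ in the overlap it applies Proposition \ref{halfreduc} \emph{twice}: first to the image spaces $A(s_i)(U_{\sigma_i})$ and $A(s_i)(U_{\sigma_{i+1}})$ to extract half-dimensional pieces with orthogonal images, and then again to their preimages to obtain $\tilde U_i^L\perp \tilde U_{i+1}^R$ of dimension $\lfloor m_0/4\rfloor$. With both domain and image orthogonality in hand, for $x=f+g\in \tilde U_i^L\oplus\tilde U_{i+1}^R$ one gets $\|A(t)x\|^2\approx\|A(t)f\|^2+\|A(t)g\|^2\ge c_0^2\|x\|^2$ up to a cross term $2\langle A(t)f,A(t)g\rangle$ that \emph{is} small (controlled by $\|A(t)-A(s_i)\|\le\varepsilon$); this is Lemma \ref{minimalstretch}. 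The factor $1/4$ from the double halving, together with rounding, is precisely the source of the constant $7$. Your proposal neither produces this orthogonality nor offers an alternative mechanism guaranteeing the lower bound along the path, so as written it does not close.
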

Note that the Hilbert-Schmidt norm of each $A(t)$ is $\sqrt{n}$ and therefore $\Lambda \leq \sqrt{n}$.

The second approach yields a more rigid collection of subspaces $\{U(t)\}_{t\in\mathbb{R}}$. As a trade-off, their dimensional estimate is weakened.

\begin{defn}
\label{convex combos of spaces}
An $m$-dimensional subspace $U$ of $\mathbb{R}^n$ is called a quadratic convex combination of disjoint basis vectors if there exist disjoint subsets $\sigma_1 = \{i_1<\cdots<i_m\}$, $\sigma_2 = \{j_1<\cdots<j_m\}$ of $\{1,\ldots,n\}$ and $\lambda\in[0,1]$ such that $U$ is spanned by the orthonormal sequence $u_k = \lambda^{1/2}e_{i_k}+(1-\lambda)^{1/2}e_{j_k}$, $k=1,\ldots,m$. If we wish to be more specific, we will say that $U$ is a $\lambda$-quadratic convex combination of $U_{\sigma_1}$ and $U_{\sigma_2}$.
\end{defn}

\begin{thm} \label{finalresultBTstatement}
There exists universal constants $c, d > 0$ such that for all continuous matrix functions $A: \mathbb{R} \rightarrow M_{n \times n}$ with the property that $\|A(t)e_i\|=1$ for all $t \in \mathbb{R}$ and $ 1 \le i \le n$, there exists a continuous family of $m$-dimensional subspaces $\{U(t)\}_{t \in \mathbb R}$ of $\mathbb{R}^n$ with $m \ge dn/\Lambda^4$ where $\Lambda = \sup_t \|A(t)\|$ such that $\|Av\| \geq c \|v\|$ for every $t \in \mathbb{R}$ and every $v \in U(t)$. Furthermore, each subspace $U(t)$ is a quadratic convex combination of disjoint basis vectors. 
\end{thm}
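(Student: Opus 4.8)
The plan is to build the continuous family $\{U(t)\}_{t\in\mathbb{R}}$ by a combinatorial/probabilistic argument that first discretizes the parameter $t$, then interpolates across the resulting ``pieces'' using quadratic convex combinations of basis vectors. First I would exploit continuity and compactness: although $\mathbb{R}$ is non-compact, the relevant data (the unit vectors $A(t)e_i$) lives in a compact set, so I can cover $\mathbb{R}$ by countably many intervals $I_k=[s_k,s_{k+1}]$ on each of which $A(t)$ is nearly constant — say $\|A(t)e_i-A(s_k)e_i\|<\delta$ for all $i$ and all $t\in I_k$, for some small $\delta$ depending only on $c_0,d_0,\Lambda$. On each such interval, apply Theorem \ref{OGBT} to the matrix $A(s_k)$ to extract a subset $\sigma_k\subset\{1,\dots,n\}$ with $|\sigma_k|\geq d_0 n\Lambda^{-2}$ on whose span $A(s_k)$ — and hence, after shrinking $c_0$ slightly to absorb $\delta$, also every $A(t)$, $t\in I_k$ — is bounded below by a universal constant. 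This gives a \emph{locally constant} (hence discontinuous) choice $t\mapsto U_{\sigma_k}$, and the whole problem is to repair the jumps at the endpoints $s_k$.

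The repair step is where the quadratic convex combination structure enters, and it is the heart of the argument. At a transition point $s_k$ we have two subsets $\sigma_k$ and $\sigma_{k+1}$, each of size $m_0\gtrsim n\Lambda^{-2}$, and we want a continuous path of subspaces from $U_{\sigma_k}$ to $U_{\sigma_{k+1}}$ staying inside the class of spans of vectors $\lambda^{1/2}e_{i}+(1-\lambda)^{1/2}e_{j}$ and keeping the lower norm bound. The natural idea: on a transition interval to the left of $s_{k+1}$, tune $\lambda$ from $1$ down to $0$ along the family spanned by $\lambda^{1/2}e_{i_\ell}+(1-\lambda)^{1/2}e_{j_\ell}$ where the $i_\ell$ enumerate (a subset of) $\sigma_k$ and the $j_\ell$ enumerate (a subset of) $\sigma_{k+1}$ — at $\lambda=1$ this is $U_{\sigma_k}$, at $\lambda=0$ it is $U_{\sigma_{k+1}}$, and the orthonormality of the $u_\ell$ requires only that $\{i_\ell\}$ and $\{j_\ell\}$ be \emph{disjoint} index sets. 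Since $\sigma_k$ and $\sigma_{k+1}$ need not be disjoint, I would first pass to disjoint sub-selections; this is exactly the point where a \emph{probabilistic} choice of the subsets $\sigma_k$ (rather than the deterministic one from Theorem \ref{OGBT}) is convenient: if one selects columns randomly with appropriate density, one can guarantee both the lower $\ell_2$-bound (via a moment/Chernoff estimate controlling the off-diagonal mass, losing a factor of $\Lambda^2$ — this is the source of the $\Lambda^{-4}$ rather than $\Lambda^{-2}$ in the dimension) and the disjointness of consecutive selections, possibly by a coloring/matching argument so that any two adjacent pieces use disjoint index sets of a common size $m$.

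The key estimate to verify along the transition is that for $v=\sum_\ell a_\ell u_\ell$ with $u_\ell=\lambda^{1/2}e_{i_\ell}+(1-\lambda)^{1/2}e_{j_\ell}$, one has $\|A(t)v\|\geq c\|v\|=c(\sum|a_\ell|^2)^{1/2}$ uniformly in $\lambda\in[0,1]$ and $t$ in the transition interval. Writing $A(t)v=\lambda^{1/2}\sum a_\ell A(t)e_{i_\ell}+(1-\lambda)^{1/2}\sum a_\ell A(t)e_{j_\ell}$, the first sum has norm $\gtrsim c\lambda^{1/2}\|a\|$ because $\{i_\ell\}\subset\sigma_k$ is a Bourgain–Tzafriri set for $A(t)$, $t\in I_k$, and similarly the second has norm $\gtrsim c(1-\lambda)^{1/2}\|a\|$; the cross term $2\lambda^{1/2}(1-\lambda)^{1/2}\langle\sum a_\ell A e_{i_\ell},\sum a_\ell Ae_{j_\ell}\rangle$ must be controlled, and here one uses that $\sigma_k\cup\sigma_{k+1}$ is itself (or can be arranged to be) a set on which $A$ is bounded below, equivalently that the Gram matrix of $\{Ae_i:i\in\sigma_k\cup\sigma_{k+1}\}$ is well-conditioned, so the cross term cannot cancel the diagonal contributions. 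Summing $\|A(t)v\|^2\geq(\text{diagonal})-(\text{cross})\geq c'\|a\|^2$ after choosing the selection densities (and hence $c$, $d$) appropriately finishes the pointwise bound. Finally one checks continuity of the resulting projection $P(t)$: on each closed piece it is constant, on each transition interval it is a continuous (indeed real-analytic) function of $\lambda$ which itself depends continuously on $t$, and the one-sided limits match by construction, so $P:\mathbb{R}\to M_{n\times n}$ is continuous and $\dim U(t)\equiv m\geq dn\Lambda^{-4}$. I expect the main obstacle to be the bookkeeping that makes \emph{every} consecutive pair of selected index sets disjoint \emph{and} of a common size while simultaneously keeping the union of any adjacent pair well-conditioned for all $A(t)$ in the two overlapping intervals — in other words, upgrading the pointwise Bourgain–Tzafriri selection to one that is compatible across neighbouring pieces without degrading the dimension by more than a constant factor.
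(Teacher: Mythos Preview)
Your outline---discretize via uniform continuity, apply Bourgain--Tzafriri at each node, then stitch adjacent pieces by quadratic convex combinations---matches the paper's skeleton, and you correctly identify the main obstacle: arranging that consecutive selections $\sigma_k,\sigma_{k+1}$ are \emph{disjoint and have well-conditioned union}. But this is not ``bookkeeping''; it is the whole technical content of the proof, and neither of your proposed mechanisms works. Your cross-term estimate is circular: a bare Cauchy--Schwarz gives $|\langle A x,A y\rangle|\le\Lambda^2\|a\|^2$, which swamps the diagonal contribution $c_0^2\|a\|^2$, so to control the cross term you are forced to assume $A$ is already bounded below on $U_{\sigma_k\cup\sigma_{k+1}}$---and once you have that, every $U(t)$ in the transition is a subspace of $U_{\sigma_k\cup\sigma_{k+1}}$ and the bound is immediate with no cross-term analysis at all (this is the paper's Lemma~\ref{columnswitch}). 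Lowering the random-selection density to $\sim\Lambda^{-4}$ does not make the union of two independently chosen BT sets well-conditioned, and a coloring/matching scheme handles disjointness but not the joint lower bound.

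What the paper actually does is prove a \emph{dependent} static result (Theorem~\ref{BTbigthm}): given $\sigma_1$ with $|\sigma_1|\le d_1 n\|A\|^{-2}$, re-run the full probabilistic Bourgain--Tzafriri machinery with the span $V=\langle Ae_j:j\in\sigma_1\rangle$ baked into every projection, producing $\sigma_2\subset\sigma_1^c$ with a one-sided lower bound on $U_{\sigma_1\cup\sigma_2}$ (Lemmas~\ref{BTstep1}--\ref{BTstep3}); then apply the same lemma \emph{again} to $A$ restricted to $U_{\sigma_1\cup\tilde\sigma_2}$, a space with $\sim n\Lambda^{-2}$ columns, to extract $\tau_1\subset\sigma_1$ and $\tilde\sigma_2\subset\sigma_2$ of size $\gtrsim n\Lambda^{-4}$ on which the lower bound holds in \emph{all} coordinates. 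The $\Lambda^{-4}$ is precisely the cost of this second BT pass on an $n\Lambda^{-2}$-dimensional problem, not of a sparser random selection. The continuous construction then proceeds iteratively: $\sigma_{i+1}$ is chosen relative to $\sigma_i$ at the node $t_{i+1}$, and because the refined subsets $\tilde\tau_{i+1}\subset\sigma_{i+1}$ (used to enter the next interval) and $\tilde{\tilde\sigma}_{i+1}\subset\sigma_{i+1}$ (used to exit the previous one) need not coincide, an additional stitch inside $\sigma_{i+1}$ via an auxiliary disjoint $\xi_{i+1}\subset\sigma_{i+1}$ is required---a subtlety your sketch does not anticipate.
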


The impact of this improved choice of subspaces on the dimensional estimate is on the exponent of $\Lambda$. To motivate the second method, which is presented in Section \ref{BT-proof}, we begin with a consideration of the issues that arise with a pointwise application of the Bourgain-Tzafriri result. Were we to try to merely apply Theorem \ref{OGBT} pointwise given $A : \mathbb R \to M_{n \times n}$, we would be able to generate a suitable collection $\{\sigma_t\}_{t\in\mathbb{R}}$ of subsets of $\{1, \dotsc, n\}$. Of course, this is not enough: $U_{\sigma_t}$ cannot be chosen uniformly with respect to $t$, so there is no reason to suppose that for two arbitrarily close points $t_1, t_2$ the corresponding $U_{\sigma_{t_1}}, U_{\sigma_{t_2}}$ would serendipitously satisfy the required continuous transition property. To see this, it is enough to notice that satisfactory sets $\sigma_t$ at a point need not be unique.  

Instead we find a suitable countable sequence of points $(t_i)_{i \in \mathbb Z}$, apply a static result \`a la  Bourgain-Tzafriri at each such $t_i$ to first find a subset $\sigma_i$ of $\{1,\ldots,n\}$ and then conclude by means of a ``stitching'' argument. By this, we mean that we continuously pass between $U_{\sigma_{i}}$ and $U_{\sigma_{{i+1}}}$ on $[t_i, t_{i+1}]$ via quadratic convex combinations $U(t)$ of $U_{\sigma_i}$ and $U_{\sigma_{i+1}}$ while simultaneously preserving the desired lower $\ell_2$-norm bound of $A(t)$ on $U(t)$. Importantly, it is necessary that on the whole interval $[t_i,t_{i+1}]$, $A(t)$ already satisfies this bound on both $U_{\sigma_{i}}$ and $U_{\sigma_{{i+1}}}$. But this alone is not sufficient. Without further stipulations, $U(t)$ could fail to preserve the desired properties at some point (e.g., $U(t)$ and $\mathrm{ker}A(t)$ may momentarily intersect for some $t \in [t_i, t_{i+1}]$, which would immediately violate the lower $\ell_2$-norm bound). From this, it becomes clear that there must be some relationship between $\sigma_{i}$ and $\sigma_{{i+1}}$ that would preserve the minimal stretch property during the ``stitching'' procedure. It is sufficient to require that for all $t\in[t_i,t_{i+1}]$, $A(t)$ satisfies the desired bound on the whole subspace $U_{\sigma_i\cup\sigma_{i+1}}$.

To achieve this, we rely on an iterative application of a modified Bourgain-Tzafriri argument. The modification introduces a dependence of each subsequent column set on the one that precedes it; the iteration consists of passing to subsets of the generated column sets, and allows for the modification to be bilateral. Then, each column set depends on both the one that precedes it and on the one that will follow. This is where the loss of optimality occurs.

Because the iterative application of the modified Bourgain-Tzafriri result incurs the loss of optimal dimension, in the proof of Theorem \ref{finalresultstatement}, we base the argument on a different way of refining the column sets. We introduce a dependence that allows for a different ``stitching'' argument to be made which takes place at the level of the spaces $A(t)(U_{\sigma_i})$ and $A(t)(U_{\sigma_{i+1}})$. This is based on nice properties of orthogonal subspaces, to which one may conveniently pass with a judicious construction.
As the Bourgain-Tzafriri result is applied once and subsequent modifications come only at the cost of worse universal constants, this optimally solves Problem \ref{mainquestion} (and, by extension, Problem \ref{equivalentquestion}).

\section{Preliminaries}
In this section we recall various norms, recall the concept of continuous matrix functions, and discuss continuously varying subspaces of $\mathbb{R}^n$. We denote the standard basis for $\mathbb{R}^n$ by $\{e_1,\dotsc,e_n\}$ and assume the norm on $\mathbb{R}^n$ to be the 2-norm: $\norm{x} = (\sum_{i=1}^n x_i^2)^\frac{1}{2}$ for all $x \in \mathbb{R}^n$. We begin by defining useful quantities on $M_{m \times n}$, the space of all real valued $m\times n$ matrices.

\begin{defn}
\label{normdefs}
Let $A =(a_{i,j})\in M_{m \times n}$.
\begin{enumerate}[leftmargin=23pt,label=(\roman*)]

\item The operator norm of $A$ is defined as
\[\|A\| = \sup\Big\{ \|Ax\|:x\in\mathbb{R}^n, \|x\| = 1\Big\}.\]
If $U$ is a subspace of $\mathbb{R}^n$, let $\|A|_U\| = \sup\{\|Ax\|:x\in U,\|x\| = 1\}$.

\item The minimal stretch of $A$ is defined as

\[m_A = \inf\Big\{ \|Ax\|:x\in\mathbb{R}^n, \|x\| = 1\Big\}.\]
If $U$ is a subspace of $\mathbb{R}^n$, let $m_{A|_U} = \inf\{\|Ax\|:x\in U,\|x\| = 1\}$.

\item\label{HSdef} The Hilbert-Schmidt norm of $A$ is defined as

\[\|A\|_{\text{HS}} = \Big(\sum_{i=1}^n\|Ae_i\|^2\Big)^{1/2} = \Big(\sum_{i=1}^m\sum_{j=1}^n a_{ij}^2\Big)^{1/2}.\]

\end{enumerate}
\end{defn}

We point out that these quantities can be equivalently defined as follows. Let $(\sigma_i)_{i=1}^{m\wedge n}$ denote sequence of the singular values of $A$.  Then $\|A\| = \max\sigma_i$ whereas $m_A = \min\sigma_i$ and $\|A\|_{\text{HS}} = (\mathrm{tr}(A^TA))^{1/2} = (\sum_{i=1}^{m\wedge n}\sigma_i^2)^{1/2}$ (see, e.g., \cite[Theorem 7.4.3]{leon:1980}). These yield the following estimates.
\[
\|A\| \leq \|A\|_\mathrm{HS}\leq \big(\mathrm{rank}(A)\big)^{1/2}\|A\|.
\]

\begin{defn}
\label{cntsmatrixfuncdef}
Let $I$ be an interval of $\mathbb{R}$. A matrix function $A = (a_{i,j}) : I \to M_{m \times n}$ is said to be continuous if its entries $a_{i,j}:I\to M_{m\times n}$ are continuous.
\end{defn}

It is well known that $A$ is continuous if and only if it is continuous as a function from the metric space $(I,|\cdot|)$ to the normed linear space $(M_{m \times n}, \|\cdot\|)$ (see, e.g., \cite[Lemma 3.1]{dai:hore:jiao:lan:motakis:2020}).

\begin{defn}
\label{continuous spaces notions}
Let $I$ be an interval of $\mathbb{R}$ and let $\{U(t)\}_{t \in I}$ be a family of subspaces of $\mathbb{R}^n$.
\begin{enumerate}[leftmargin=19pt,label=(\roman*)]

\item\label{continuous spaces} The family $\{U(t)\}_{t \in I}$ is said to be a continuous choice of subspaces of $\mathbb{R}^n$ if the function $P : I \to M_{n\times n}$ where $P(t)$ is the orthogonal projection onto $U(t)$ for all $t \in I$ is continuous.

\item\label{continuous bases} The family $\{U(t)\}_{t \in I}$ is said to admit a continuous choice of basis if, for some $m\in\mathbb{N}$, there exist continuous $\gamma_1,\ldots,\gamma_m:I\to \mathbb{R}^n$ so that for each $t\in I$, $\gamma_1(t),\ldots,\gamma_m(t)$ form a basis of $U(t)$.

\end{enumerate}
\end{defn}

\begin{lem}
\label{continuous gram-schmidt}
If $I$ is an interval of $\mathbb{R}$ and a family $\{U(t)\}_{t \in I}$ of subspaces of $\mathbb{R}^n$ admits a continuous choice of basis $\gamma_1,\ldots,\gamma_m:I\to \mathbb{R}^n$ then it also admits a continuous choice of orthonormal basis $u_1,\ldots,u_m:I\to\mathbb{R}^n$. Therefore $\{U(t)\}_{t \in I}$ is a continuous choice of subspaces of $\mathbb{R}^n$.
\end{lem}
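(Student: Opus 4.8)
The plan is to run the Gram--Schmidt orthonormalization process pointwise in $t$ and to check that it preserves continuity at every stage. Set $w_1(t) = \gamma_1(t)$ and, inductively for $2 \le k \le m$,
\[
w_k(t) = \gamma_k(t) - \sum_{j=1}^{k-1} \langle \gamma_k(t), u_j(t)\rangle\, u_j(t), \qquad u_k(t) = \frac{w_k(t)}{\|w_k(t)\|}.
\]
For each fixed $t \in I$ the vectors $\gamma_1(t),\ldots,\gamma_m(t)$ are linearly independent, since by hypothesis they form a basis of $U(t)$. Hence the usual Gram--Schmidt argument applies verbatim at that $t$: $w_k(t) \neq 0$ for every $k$, so the denominators above never vanish, and $u_1(t),\ldots,u_m(t)$ is an orthonormal basis of the span of $\gamma_1(t),\ldots,\gamma_m(t)$, which is exactly $U(t)$.

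Continuity of the $u_k$ then follows by induction on $k$. The map $t\mapsto w_1(t)=\gamma_1(t)$ is continuous by hypothesis, and $t\mapsto\|w_1(t)\|$ is a continuous, everywhere-positive scalar function, so $u_1=w_1/\|w_1\|$ is continuous. Assuming $u_1,\ldots,u_{k-1}:I\to\mathbb{R}^n$ are continuous, each scalar coefficient $t\mapsto\langle\gamma_k(t),u_j(t)\rangle$ is continuous (as a product and sum of continuous functions), so $w_k$ is continuous; and again $\|w_k(t)\|>0$ for all $t$ forces $u_k=w_k/\|w_k\|$ to be continuous. This yields the desired continuous orthonormal basis $u_1,\ldots,u_m:I\to\mathbb{R}^n$ of $\{U(t)\}_{t\in I}$.

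For the last assertion, express the orthogonal projection onto $U(t)$ through this basis as $P(t)=\sum_{k=1}^m u_k(t)u_k(t)^{T}$. Each entry of $P(t)$ is a finite sum of products of coordinate functions of the $u_k(t)$, hence continuous in $t$, so $P:I\to M_{n\times n}$ is continuous and $\{U(t)\}_{t\in I}$ is a continuous choice of subspaces in the sense of Definition \ref{continuous spaces notions}\ref{continuous spaces}.

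The argument is essentially routine; the only delicate point is the non-degeneracy at each Gram--Schmidt step, i.e.\ that $w_k(t)\neq 0$ for every $t\in I$, since otherwise the normalization would be undefined or could blow up. This is exactly where one uses that $\gamma_1(t),\ldots,\gamma_m(t)$ is a \emph{basis} of $U(t)$ for every $t$ (constant cardinality $m$ and pointwise linear independence), rather than merely a spanning or linearly independent set of possibly varying size.
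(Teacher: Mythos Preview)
Your proof is correct and follows essentially the same approach as the paper: both apply Gram--Schmidt pointwise, normalize, and then express the projection as $P(t)=W(t)W(t)^{T}$ (equivalently $\sum_k u_k(t)u_k(t)^T$). Your version is in fact more careful than the paper's, which omits the explicit verification that $w_k(t)\neq 0$ and the inductive continuity check.
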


\begin{proof}
Define $u_1(t) = \|\gamma_1(t)\|^{-1}\gamma_1(t)$. By induction on $k=2,\ldots,m$ let $\tilde u_k(t) = \gamma_k(t) - \sum_{i=1}^{k-1}\langle\gamma_k(t),u_i(t) \rangle u_i(t)$ and $u_k(t) = \|\tilde u_k(t)\|^{-1}\tilde u_k(t)$. This yields a continuous choice of orthonormal basis $u_1,\ldots,u_m:I\to\mathbb{R}^n$. Finally, note that for each $t\in I$, putting $W(t) = [u_1(t)\cdots u_m(t)]$ we have that $P(t) = W(t)W^T(t)$ the orthogonal projection onto $U(t)$ and $P:I\to M_{n\times n}$ is continuous.
\end{proof}

Our next goal is to show that \ref{continuous spaces} and \ref{continuous bases} of Definition \ref{continuous spaces notions} are in fact equivalent. Note it is essential that the domain $I$ is a subset of $\mathbb{R}$. Indeed if we replace $I$ with the unit circle $S^1$ then this would no longer be true since we are working with subspaces of $\mathbb{R}^n$.  For example, for all $x = (\cos(\theta),\sin(\theta))$ in $S^1$ let
\[U(\cos(\theta),\sin(\theta)) = \langle(\cos(\theta/2),\sin(\theta/2))\rangle\subset \mathbb{R}^2.\]
Then $\{U(x)\}_{x\in S^1}$ is a continuous choice of subspaces of $\mathbb{R}^2$ for which a continuous choice of basis is impossible.

\begin{thm}
\label{projection yields basis}
Let $I$ be an interval of $\mathbb{R}$.  A family of subspaces $\{U(t)\}_{t \in I}$ of $\mathbb{R}^n$ is continuous if and only if it admits a continuous choice of orthonormal basis.
\end{thm}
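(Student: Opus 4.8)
The plan is to prove the two implications separately, the one from a continuous orthonormal basis to a continuous family being immediate and the converse being the substance. If $u_1,\dots,u_m\colon I\to\mathbb{R}^n$ is a continuous choice of orthonormal basis, then with $W(t)=[u_1(t)\cdots u_m(t)]$ the orthogonal projection onto $U(t)$ is $P(t)=W(t)W(t)^T$, which is continuous; this is already contained in Lemma \ref{continuous gram-schmidt}. For the converse, suppose $P\colon I\to M_{n\times n}$ is continuous with $P(t)$ the orthogonal projection onto $U(t)$. First, $\dim U(t)=\mathrm{tr}\,P(t)$ is a continuous, integer-valued function on the interval $I$, hence constant; denote this value by $m$.

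The next step is a local statement: every $t_0\in I$ has a relatively open neighborhood $V_{t_0}\subseteq I$ on which $\{U(t)\}$ admits a continuous orthonormal basis. Fix an orthonormal basis $w_1,\dots,w_m$ of $U(t_0)$, so that $P(t_0)w_i=w_i$, and put $\gamma_i(t)=P(t)w_i\in U(t)$. These depend continuously on $t$, and since $\|P(t)-P(t_0)\|\to0$ as $t\to t_0$ we get $\gamma_i(t)\to w_i$, so the Gram matrix $\big(\langle\gamma_i(t),\gamma_j(t)\rangle\big)_{i,j=1}^m$ tends to the identity matrix and is therefore invertible for $t$ in some neighborhood $V_{t_0}$ of $t_0$. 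On $V_{t_0}$ the $\gamma_i(t)$ are linearly independent and, being $m$ vectors in the $m$-dimensional space $U(t)$, form a basis of it; Lemma \ref{continuous gram-schmidt} then upgrades this continuous choice of basis to a continuous choice of orthonormal basis over $V_{t_0}$.

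It remains to patch the local orthonormal bases into a global one, and this is the only place where the hypothesis that $I\subseteq\mathbb{R}$ is an interval enters (for parameter spaces such as $S^1$ the conclusion fails, as the example preceding the theorem shows). The basic gluing device is this: if $(u_i)_{i=1}^m$ is a continuous orthonormal basis of $\{U(t)\}$ on $[a,b]$ and $(v_i)_{i=1}^m$ one on $[b,c]$, let $R=\big(\langle u_i(b),v_j(b)\rangle\big)_{i,j}\in O(m)$; then $t\mapsto\big(\sum_{j=1}^m R_{ij}v_j(t)\big)_{i=1}^m$ is again a continuous orthonormal basis on $[b,c]$ that agrees with $(u_i)$ at $b$, so pasting yields a continuous orthonormal basis on $[a,c]$. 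Combining this with the trivial fact that a continuous $O(m)$-valued function on a subinterval of $I$ extends continuously to $I$ (extend it by constants past the endpoints), one proceeds in two stages: a connectedness argument — fix $t_0\in I$; the set of $t\in I$ admitting a continuous orthonormal basis on the closed subinterval between $t_0$ and $t$ is nonempty, relatively open, and relatively closed, hence all of $I$ — followed, if $I$ is not compact, by exhausting $I$ by an increasing sequence of compact subintervals containing $t_0$ and inductively rotating the basis obtained on the $(k+1)$-st to match the one already built on the $k$-th, which assembles a single continuous orthonormal basis on $I$. The dimension-constancy and the local-existence steps are routine; the genuine obstacle is this globalization, i.e.\ arranging the local orthonormal bases to be mutually consistent over the whole, possibly non-compact, interval.
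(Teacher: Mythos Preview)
Your proof is correct and follows the same overall architecture as the paper's (constant rank via the trace, local existence of a continuous basis, then local-to-global patching), but each step is implemented differently. For local existence you project a fixed orthonormal basis of $U(t_0)$ through $P(t)$ and invoke invertibility of the Gram matrix, whereas the paper selects $k$ linearly independent \emph{columns} of $P(t_0)$ and uses nonvanishing of a $k\times k$ minor. For the gluing step, the paper stitches bases on overlapping open intervals $(a,b)$ and $(c,d)$ by a constant change-of-basis matrix $F$ chosen at a single point in the overlap; you instead glue at a shared endpoint via the orthogonal matrix $R=(\langle u_i(b),v_j(b)\rangle)$, and for the exhaustion step you use the cleaner device of extending a continuous $O(m)$-valued transition function by constants past the endpoints. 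Finally, the paper globalizes by finite open covers of $[-m,m]$ and induction on $m$, while you use a connectedness argument on the set of $t$ reachable from $t_0$, followed by compact exhaustion. Your route is a bit more abstract and arguably tidier (working with orthonormal frames throughout avoids a separate Gram--Schmidt pass at the end), while the paper's argument is more explicitly constructive; both rely on the same essential fact that one-dimensionality of $I$ removes any cocycle obstruction.
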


The proof of the above requires some preparatory steps. For each of the following lemmata, let $I$ be an interval of $\mathbb R$ and $P : I \to M_{n \times n}$ be a continuous matrix function such that $P(t)$ is an orthogonal projection for all $t \in \mathbb R$.

\begin{lem}\label{7a}
The rank of $P(t)$ is constant for all $t \in I$.  
\end{lem}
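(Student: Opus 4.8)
The plan is to show that the function $t \mapsto \operatorname{rank}(P(t)) = \operatorname{tr}(P(t))$ is locally constant on the interval $I$, which forces it to be globally constant since $I$ is connected. The trace is a continuous (indeed linear) functional on $M_{n\times n}$, so $t\mapsto \operatorname{tr}(P(t))$ is continuous as a composition of continuous maps; but the rank of an orthogonal projection is an integer, and a continuous integer-valued function on a connected set is constant. That dispatches the statement almost immediately. To make the argument self-contained I would include the one-line justification that $\operatorname{rank}(P(t)) = \operatorname{tr}(P(t))$: since $P(t)$ is an orthogonal projection it is diagonalizable with eigenvalues in $\{0,1\}$, so its trace equals the multiplicity of the eigenvalue $1$, which is exactly the dimension of its range.

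An alternative, if one prefers to avoid invoking ``continuous integer-valued functions are constant'' as a black box, is to argue locally via a perturbation estimate: for projections $P, Q$ with $\|P - Q\| < 1$ one has $\operatorname{rank}(P) = \operatorname{rank}(Q)$. This can be seen by noting that if $\|P-Q\|<1$ then $P$ restricted to $\operatorname{Im}(Q)$ is injective (for $v\in\operatorname{Im}(Q)$, $\|Pv - v\| = \|(P-Q)v\| < \|v\|$ unless $v=0$), so $\operatorname{rank}(P)\geq\operatorname{rank}(Q)$, and by symmetry equality holds. Combined with continuity of $P$ at each point $t_0$ (choose a neighborhood where $\|P(t)-P(t_0)\|<1$), this gives that the rank is locally constant, hence constant on the connected set $I$.

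I do not anticipate a genuine obstacle here; the only thing to be careful about is making sure the route chosen does not secretly depend on something proved later (e.g.\ Theorem \ref{projection yields basis}, whose proof this lemma feeds into), so I would use the trace argument or the elementary perturbation argument above rather than anything involving continuous choices of bases. I would most likely present the trace version as the main proof for brevity, perhaps with the perturbation fact \eqref{} omitted or relegated to a remark, since it is this local rank-stability phenomenon that the subsequent lemmata will build on anyway.
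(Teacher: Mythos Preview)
Your proposal is correct and follows essentially the same approach as the paper: the paper's proof is the one-line trace argument you give first, observing that $\operatorname{rank}(P(t))=\operatorname{tr}(P(t))$ is continuous and integer-valued on the interval $I$. Your additional perturbation argument is a valid alternative but is not used in the paper.
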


\begin{proof} 
Since the rank of an orthogonal projection is equal to the trace of the orthogonal projection, and since $\tr(P(t)): I  \to \mathbb R$ is continuous when $P$ is continuous, the result follows.
\end{proof}

\begin{lem} \label{7b-lem1}
For all $t_0 \in I$ there is an $\varepsilon > 0$ and continuous functions $\gamma_1, \dotsc, \gamma_k : (t_0 - \varepsilon , t_0 + \varepsilon)\cap I \to \mathbb R^n$ such that for all $t \in (t_0 - \varepsilon, t_0 + \varepsilon)\cap I, \, \Im(P(t)) = \langle \gamma_1(t), \dotsc, \gamma_k(t)\rangle$.
\end{lem}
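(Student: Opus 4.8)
The plan is to build a local basis of $\Im(P(t))$ out of suitably chosen columns of $P(t_0)$ and then propagate it by continuity. Fix $t_0\in I$ and let $k=\mathrm{rank}(P(t_0))$, which by Lemma \ref{7a} equals $\mathrm{rank}(P(t))$ for every $t\in I$. Since the columns $P(t_0)e_1,\ldots,P(t_0)e_n$ span $\Im(P(t_0))$ and $\dim\Im(P(t_0))=k$, we may select indices $j_1<\cdots<j_k$ in $\{1,\ldots,n\}$ such that $P(t_0)e_{j_1},\ldots,P(t_0)e_{j_k}$ are linearly independent. I would then define $\gamma_i(t)=P(t)e_{j_i}$ for $i=1,\ldots,k$ and $t\in I$; each $\gamma_i$ is continuous because $P$ is continuous (composing with the fixed linear map $M\mapsto Me_{j_i}$), and clearly $\gamma_i(t)\in\Im(P(t))$ for all $t$.

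Next I would argue that linear independence of the $\gamma_i(t)$ persists on a neighborhood of $t_0$ — this is the one spot needing (mild) care, since it is exactly the statement that linear independence is an open condition. Form the $n\times k$ matrix $\Gamma(t)=[\,\gamma_1(t)\ \cdots\ \gamma_k(t)\,]$ and consider the Gram determinant $g(t)=\det\!\big(\Gamma(t)^{T}\Gamma(t)\big)$. The function $g:I\to\mathbb{R}$ is continuous, satisfies $g(t)\geq 0$ for all $t$, and $g(t_0)>0$ precisely because $\gamma_1(t_0),\ldots,\gamma_k(t_0)$ are linearly independent. Hence there is $\varepsilon>0$ such that $g(t)>0$, equivalently $\gamma_1(t),\ldots,\gamma_k(t)$ are linearly independent, for every $t\in(t_0-\varepsilon,t_0+\varepsilon)\cap I$. (One could just as well fix a nonzero $k\times k$ minor of $\Gamma(t_0)$ and invoke continuity of that minor.)

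Finally, on this neighborhood $\gamma_1(t),\ldots,\gamma_k(t)$ are $k$ linearly independent vectors all lying in $\Im(P(t))$, while $\dim\Im(P(t))=k$ by Lemma \ref{7a}; therefore $\Im(P(t))=\langle\gamma_1(t),\ldots,\gamma_k(t)\rangle$, which is the desired conclusion. I do not anticipate any real obstacle: the argument is entirely elementary, with the only substantive input being the openness of the linear-independence condition, handled by the continuity and strict positivity of the Gram determinant at $t_0$.
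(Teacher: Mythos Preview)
Your proof is correct and follows essentially the same approach as the paper: select $k$ linearly independent columns of $P(t_0)$, set $\gamma_i(t)=P(t)e_{j_i}$, and use continuity of a determinant to propagate linear independence to a neighborhood. The only cosmetic difference is that the paper picks a nonvanishing $k\times k$ minor of $\Gamma(t_0)$ rather than the Gram determinant, exactly the alternative you mention parenthetically.
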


\begin{proof}
Let $t_0 \in I$. By assumption, $\rank(P(t_0)) =k$, so there exist $k$ linearly independent columns of $P(t_0)$, $\{Pe_{i_j}(t_0)\}_{j=1}^{k}$, which span $\Im(P(t_0))$. That is, $$\Im(P(t_0)) = \langle Pe_{i_1}(t_0) \dotsc Pe_{i_k}(t_0) \rangle.$$ Since $\rank(Pe_{i_1}(t_0) \dotsc Pe_{i_k}(t_0))=k$, we can find $k$ suitable rows of the matrix $[Pe_{i_1}(t) \dotsc Pe_{i_k}(t)]$ to obtain $B : \mathbb R \to M_{k \times k}$ such that $B(t_0)$ is invertible (by virtue of satisfying $\det B(t_0) \neq 0$). 
The continuity of the determinant function guarantees the existence of an $\varepsilon > 0$ such that for any $t \in (t_0 - \varepsilon, t_0 + \varepsilon)\cap I, \, \det B(t) \neq 0$. 
Thus, for any $t \in (t_0 - \varepsilon, t_0 + \varepsilon)\cap I, \rank(Pe_{i_1}(t) \dotsc Pe_{i_k}(t)) = k$ and so $$\Im(P(t)) = \langle Pe_{i_1}(t) \dotsc Pe_{i_k}(t) \rangle  \text{ as } \rank(P(t))=k$$ (where $B(t)$ is the minor of $P(t)$ defined using the same components as $B(t_0)$).
\end{proof}

\begin{lem}\label{7b-lem2}
Let $a < c < b < d$ and $P: (a,d) \to M_{n \times n}$ be continuous and let $k=\rank (P(t))$ for all $t \in \mathbb R$. Assume that $\Im(P(t))$ admits a continuous choice of basis $\gamma_1, \dotsc, \gamma_k : (a, b) \to \mathbb R^n$ on $(a,b)$ and another continuous choice of basis on $(c,d)$. Then, $\gamma_1, \dotsc, \gamma_k$ may be extended to a continuous choice of basis on $(a,d)$.
\end{lem}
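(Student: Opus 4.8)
The plan is to glue the two given bases together via a change-of-coordinates matrix which one freezes at an interior point of the overlap, so that the frame stays honestly invertible throughout the transition. Write $\delta_1,\dots,\delta_k:(c,d)\to\mathbb{R}^n$ for the second continuous choice of basis of $\Im(P(t))$. On the overlap $(c,b)$ both $(\gamma_i(t))_{i=1}^k$ and $(\delta_j(t))_{j=1}^k$ are bases of the same $k$-dimensional space $\Im(P(t))$, so there is a unique $M(t)\in GL_k(\mathbb{R})$ with $\gamma_i(t)=\sum_{j=1}^k M(t)_{ij}\,\delta_j(t)$ for each $i$.

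First I would check that $t\mapsto M(t)$ is continuous on $(c,b)$: pairing the last identity with $\delta_l(t)$ gives $M(t)G(t)=H(t)$, where $G(t)=\big(\langle\delta_j(t),\delta_l(t)\rangle\big)_{j,l}$ is the Gram matrix of $\delta_1(t),\dots,\delta_k(t)$ --- continuous, symmetric and positive definite, hence invertible with continuous inverse --- and $H(t)=\big(\langle\gamma_i(t),\delta_l(t)\rangle\big)_{i,l}$ is continuous, so $M(t)=H(t)G(t)^{-1}$. Then I would fix any $t_0\in(c,b)$, put $M_0:=M(t_0)\in GL_k(\mathbb{R})$, and define $\tilde\gamma_i:(a,d)\to\mathbb{R}^n$ by $\tilde\gamma_i(t)=\gamma_i(t)$ for $t\in(a,t_0]$ and $\tilde\gamma_i(t)=\sum_{j=1}^k (M_0)_{ij}\,\delta_j(t)$ for $t\in[t_0,d)$. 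The two branches agree at $t_0$ by the defining property of $M_0$, so each $\tilde\gamma_i$ is continuous on $(a,d)$ and agrees with $\gamma_i$ on $(a,t_0]$. It then remains to check the basis property pointwise: for $t\le t_0$ the vectors $\tilde\gamma_i(t)$ are just the $\gamma_i(t)$, a basis of $\Im(P(t))$; for $t\ge t_0$ they are the image of the basis $\delta_1(t),\dots,\delta_k(t)$ under the invertible map with matrix $M_0$, hence again a basis of $\Im(P(t))$. This yields the desired continuous choice of basis on $(a,d)$, extending $\gamma_1,\dots,\gamma_k$ in the sense that it agrees with them on the initial subinterval $(a,t_0]$ (in particular on $(a,c]$, choosing $t_0\in(c,b)$).

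I expect the only real obstacle to be keeping the frame linearly independent across the transition. One cannot interpolate linearly between the two bases, since intermediate combinations could degenerate, and $M(t)$ need not even extend continuously to $[b,d)$ --- indeed $\gamma_i(t)$ itself may rotate without limit as $t\to b^-$, so honestly extending all of $\gamma_1,\dots,\gamma_k$ past $b$ is impossible in general, which is why the extension we produce agrees with them only on an initial subinterval rather than on all of $(a,b)$. Freezing the change of coordinates at a single point $t_0$ dissolves the difficulty at once, because then at every $t$ the frame $(\tilde\gamma_i(t))_i$ is an honest basis acted on by an honest invertible matrix; this is precisely the form of output that the inductive gluing in the proof of Theorem \ref{projection yields basis} uses.
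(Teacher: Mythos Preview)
Your proof is correct and follows essentially the same approach as the paper: pick a point $t_0$ in the overlap, record the change-of-basis matrix there, and splice the first basis on $(a,t_0]$ with the second basis transformed by that fixed matrix on $[t_0,d)$. Your argument that $t\mapsto M(t)$ is continuous on $(c,b)$ is correct but superfluous, since you only ever use $M_0=M(t_0)$ at the single point $t_0$; the paper simply writes down the coordinates of $\gamma_i(t_0)$ in the $\delta_j(t_0)$ basis without discussing how $M$ varies.
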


\begin{proof}
By Lemma \ref{7b-lem1}, it is sufficient to show that we can ``stitch together'' continuous bases with intersecting domains. Let
\begin{align*}\alpha_1, \dotsc, \alpha_k &: (a,b) \to \mathbb R^n \text{ s.t. } \forall t \in (a,b), \text{ Im}(P(t)) = \langle \alpha_1(t), \dotsc, \alpha_k(t) \rangle \\ 
\beta_1, \dotsc, \beta_k &: (c,d) \to \mathbb R^n \text{ s.t. } \forall t \in (c,d), \text{ Im}(P(t)) = \langle \beta_1(t), \dotsc, \beta_k(t) \rangle
\end{align*}
for some $a<c<b<d$. Let $t_0 \in (c,b)$.  Then we can write the $\alpha_i (t_0)$ as linear combinations of $\beta_1(t_0), \dotsc, \beta_k(t_0)$ since $\langle \alpha_1(t_0), \dotsc, \alpha_k(t_0)\rangle=\langle \beta_1(t_0), \dotsc, \beta_k(t_0) \rangle$. In other words, for each $i \in \{1, \dotsc, k\}$, there are $(\lambda_{ij})_{j=1}^k \text{ s.t. } \alpha_i(t_0) = \displaystyle \sum_{j=1}^k \lambda_{ij} \beta_j (t_0)$.

Since the $\left(\alpha_i(t_0)\right)_{i=1}^k$ and $\left(\beta_i(t_0)\right)_{i=1}^k$ are bases of Im$(P(t_0))$, the $\lambda_{ij}$ uniquely determine an invertible matrix $F = (\lambda_{ij}) \in M_{k \times k} (\mathbb R)$, for which 
$$ \begin{bmatrix} \alpha_1(t_0)^T \\ \vdots \\ \alpha_k(t_0)^T \end{bmatrix} = F \begin{bmatrix} \beta(t_0)^T \\ \vdots \\ \beta_k(t_0)^T \end{bmatrix}.$$ 
Define $(\gamma_1, \dotsc, \gamma_k) : (a,d) \to M_{n \times k}(\mathbb R)$ by: 
$$ \left(\gamma_1(t), \dotsc, \gamma_k(t)\right) =
    \begin{cases}
        \left(\alpha_1(t), \dotsc, \alpha_k(t)\right) & t\leq t_0 \\
        \left(\beta_1(t), \dotsc, \beta_k(t)\right) F^T & t > t_0. \\
        \end{cases}$$
Then for every $t \in (a,d),$ Im$(P(t)) = \langle \gamma_1(t), \dotsc, \gamma_k(t) \rangle$ (since $\det F \neq 0$), and the $\gamma_i$ are continuous.
\end{proof}

\begin{proof}[Proof of Theorem \ref{projection yields basis}]
Let $P: I \to M_{n\times n}$ be continuous such that for all $t \in I$, $P(t)$ is an orthogonal projection. We want to construct $u_i :I \to \mathbb R^n$ continuous such that $u_1(t), \dotsc, u_k(t)$ form a basis of Im$(P(t))$ for every $t \in I$. For simplicity, let us assume that $I = \mathbb{R}$ as the other cases are similar.

We work first with $[-1, 1]$. As it is compact, by Lemma \ref{7b-lem1}, $[-1, 1]$ admits a finite cover by open intervals $(a_j, b_j)$ so that there exist continuous $\gamma_{ji} : (a_j, b_j) \to \mathbb R^n$ such that for $t \in (a_j, b_j), (\gamma_{j1}(t), \dotsc, \gamma_{jk}(t))$ is a basis of Im$(P(t))$.
    
Without loss of generality, we assume $a_{j+1} < b_j$ to obtain a cover by ``interlocking" intervals (with non-empty sequential intersections). A finite, step-by-step application of Lemma \ref{7b-lem2} allows for the construction of continuous $\gamma_i : [-1, 1] \to \mathbb R^n$ such that $(\gamma_1(t), \dotsc, \gamma_k(t))$ is a basis of Im$(P(t))$ for each $t \in [-1, 1]$.
    
Suppose $\gamma_i(t)$ is defined for all $t \in [-m, m]$. We will extend $\gamma_i$ to $[-(m+1), m+1]$. We similarly use Lemma \ref{7b-lem1} and the compactness of $[-(m+1), m+1]$ to obtain a suitable finite open cover, whereupon we apply Lemma \ref{7b-lem2} as above to define $\gamma_i(t)$ for $t \in [-(m+1), m+1]$ without modifying $\gamma_i$ on $[-m, m]$. Notice that $\gamma_i$ is well-defined as it is independent of the choice of $m$ (subsequent extensions of $\gamma_i$ do not alter the behaviour of the function on a domain on which it was previously defined). 
\end{proof}

\section{Method I: Optimal Dimensional Bound}\label{optimal}
In this section we prove Theorem \ref{finalresultstatement} and then show how it can be used to provide a solution to Problem \ref{equivalentquestion}.

\subsection{The proof of Theorem \ref{finalresultstatement}}
We start by proving some statements that will allow us to find appropriate subspaces of $\mathbb{R}^n$ and then stitch them together.

\begin{prop}\label{halfreduc}
If $X$ and $Y$ are linear subspaces of $\mathbb{R}^n$ and $\dim{X} = \dim{Y} = m$, then there exist linear subspaces $\stilde X \subset X, \tilde Y \subset Y$ such that $\dim{\tilde X}, \dim{\stilde Y} \geq \lfloor \frac{m}{2} \rfloor$ and $\tilde X \perp \tilde Y$.
\end{prop}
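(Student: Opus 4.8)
The plan is to avoid building $\tilde X$ and $\tilde Y$ simultaneously; instead I would fix one of the two arbitrarily and then cut the other one down inside the orthogonal complement of the first. The only tool required is the elementary count $\dim(V\cap W)\ge \dim V+\dim W-n$ for subspaces $V,W\subseteq\mathbb{R}^n$, which is immediate from $\dim(V+W)=\dim V+\dim W-\dim(V\cap W)$ together with $\dim(V+W)\le n$.

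Concretely, I would first choose $\tilde Y\subseteq Y$ to be any subspace with $\dim\tilde Y=\lceil m/2\rceil$, which is possible since $\lceil m/2\rceil\le m=\dim Y$; note that $\dim\tilde Y\ge\lfloor m/2\rfloor$, so $\tilde Y$ already satisfies its required bound. Next I would look at $X\cap\tilde Y^{\perp}$. Since $\dim\tilde Y^{\perp}=n-\lceil m/2\rceil$, the dimension inequality gives
\[
\dim\big(X\cap\tilde Y^{\perp}\big)\ \ge\ \dim X+\dim\tilde Y^{\perp}-n\ =\ m+\big(n-\lceil m/2\rceil\big)-n\ =\ m-\lceil m/2\rceil\ =\ \lfloor m/2\rfloor.
\]
Finally I would let $\tilde X$ be any $\lfloor m/2\rfloor$-dimensional subspace of $X\cap\tilde Y^{\perp}$. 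Then $\tilde X\subseteq X$, $\dim\tilde X=\lfloor m/2\rfloor\ge\lfloor m/2\rfloor$, and since $\tilde X\subseteq\tilde Y^{\perp}$ we automatically have $\tilde X\perp\tilde Y$, which is exactly the assertion.

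I do not expect a genuine obstacle here: the one point to get right is the asymmetric move of fixing $\tilde Y$ first and intersecting $X$ with $\tilde Y^{\perp}$, rather than trimming $X$ and $Y$ in parallel. If the subsequent ``stitching'' argument were to need extra structure on $\tilde X,\tilde Y$ (for instance explicitly matched orthonormal bases), I would instead invoke the CS / principal-angle decomposition: there exist orthonormal bases $x_1,\dots,x_m$ of $X$ and $y_1,\dots,y_m$ of $Y$ with $\langle x_i,y_j\rangle=\sigma_i\delta_{ij}$ for some scalars $\sigma_1,\dots,\sigma_m\ge 0$ (read off from the singular value decomposition of $Q_X^{T}Q_Y$, where $Q_X,Q_Y$ are $n\times m$ matrices whose columns are orthonormal bases of $X,Y$). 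Then taking $\tilde X=\langle x_i : 1\le i\le\lfloor m/2\rfloor\rangle$ and $\tilde Y=\langle y_j : \lfloor m/2\rfloor< j\le 2\lfloor m/2\rfloor\rangle$ uses disjoint index sets, so $\langle x_i,y_j\rangle=0$ for every relevant $i,j$ and hence $\tilde X\perp\tilde Y$, with both dimensions equal to $\lfloor m/2\rfloor$.
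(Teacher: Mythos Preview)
Your first argument is correct and, for the bare statement of the proposition, is strictly more elementary than the paper's proof: you only use the dimension count $\dim(V\cap W)\ge\dim V+\dim W-n$, whereas the paper first proves a separate lemma (via the spectral theorem applied to $\tilde P^*\tilde P$, with $\tilde P$ the restriction to $X$ of the orthogonal projection onto $Y$) producing orthonormal bases $\{x_i\}$, $\{y_j\}$ of $X$ and $Y$ with $\langle x_i,y_j\rangle=0$ for $i\neq j$, and then splits the index set in half. Your second approach---the principal-angle/SVD decomposition---is exactly the content of that lemma, so there you recover the paper's route.

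The trade-off is precisely the one you flag. The paper needs the matched-basis lemma independently: it is invoked again in the subsequent proposition to build the continuous path of subspaces between $X$ and $Y$ (the ``stitching''), where one interpolates along the paired vectors $x_i$ and $y_i$. So while your dimension-count argument dispatches the proposition more cheaply, the principal-angle machinery is not wasted effort in the paper's development, and you would end up proving it anyway.
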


In order to show this theorem, we first prove a lemma that allows us to construct a convenient basis for our subspaces.

\begin{lem} \label{perpendicular}
Given two subspaces $X$ and $Y$ of $\mathbb{R}^n$ of dimension $m$, there exist orthonormal bases $\{ x_1,x_2, \ldots, x_m\}$ and $\{ y_1,y_2, \ldots, y_m\}$ of $X$ and $Y$ respectively, such that $\langle x_i, y_j \rangle =0$\ for any $i \neq j$.
\end{lem}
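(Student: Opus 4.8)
The natural tool here is the singular value decomposition applied to the "cross-correlation" between $X$ and $Y$. Concretely, fix arbitrary orthonormal bases and let $P_X, P_Y$ be the orthogonal projections onto $X$ and $Y$. Consider the linear map $T = P_Y|_X : X \to Y$, or equivalently its matrix representation $C = Q_Y^T Q_X \in M_{m \times m}$ where $Q_X, Q_Y$ are $n \times m$ matrices whose columns form orthonormal bases of $X, Y$. Apply the SVD to $C$: write $C = V \Sigma W^T$ with $V, W \in M_{m\times m}$ orthogonal and $\Sigma$ diagonal. The plan is to take $\{x_i\}$ to be the orthonormal basis of $X$ obtained from the columns of $Q_X W$ (i.e. $x_i = Q_X W e_i$) and $\{y_j\}$ to be the orthonormal basis of $Y$ obtained from the columns of $Q_Y V$ (i.e. $y_j = Q_Y V e_j$). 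Since $W, V$ are orthogonal and $Q_X, Q_Y$ have orthonormal columns, $\{x_i\}$ and $\{y_j\}$ are indeed orthonormal bases of $X$ and $Y$.

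It then remains to verify the orthogonality condition $\langle x_i, y_j\rangle = 0$ for $i \neq j$. Here I would simply compute: $\langle x_i, y_j \rangle = \langle Q_X W e_i, Q_Y V e_j \rangle = (V e_j)^T (Q_Y^T Q_X) (W e_i) = e_j^T V^T C W e_i = e_j^T V^T (V \Sigma W^T) W e_i = e_j^T \Sigma e_i$, which equals $0$ precisely when $i \neq j$ (and equals the $i$-th singular value $\sigma_i$ when $i = j$). This establishes the claim. Note in passing that the diagonal entries $\sigma_i = \langle x_i, y_i\rangle \in [0,1]$ are the cosines of the principal angles between $X$ and $Y$; this interpretation, while not needed for the statement, will presumably be useful in the proof of Proposition \ref{halfreduc}, where one discards the indices with $\sigma_i$ close to $1$.

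I do not anticipate a genuine obstacle here — the lemma is essentially the principal angle / CS-decomposition setup — so the only thing to be careful about is bookkeeping: making sure the basis vectors land in the correct space (columns of $Q_X W$ span $X$ since $W$ is invertible, and similarly for $Y$) and that one applies the SVD to $Q_Y^T Q_X$ in the order that makes the cross terms vanish. An alternative, more self-contained route avoiding the SVD black box would be to induct on $m$: pick unit vectors $x_1 \in X$, $y_1 \in Y$ realizing $\max\{\langle x, y\rangle : x \in X, y \in Y, \|x\|=\|y\|=1\}$, show via a first-order variation (Lagrange multiplier) argument that $P_Y x_1$ is parallel to $y_1$ and $P_X y_1$ is parallel to $x_1$, hence $x_1 \perp (Y \cap y_1^\perp)$ and $y_1 \perp (X \cap x_1^\perp)$, and then recurse on the pair $X \cap x_1^\perp$, $Y \cap y_1^\perp$. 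I would present the SVD argument as the main proof since it is shorter and cleaner.
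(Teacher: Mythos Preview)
Your proof is correct and takes essentially the same approach as the paper: both arguments amount to the singular value decomposition of $P_Y|_X$, with the paper obtaining the $x_i$ by diagonalizing $\tilde P^*\tilde P$ via the spectral theorem and then setting $y_i$ to be the normalized images $\tilde P x_i$ (extended where these vanish), whereas you invoke the SVD of $Q_Y^TQ_X$ directly. Your packaging is slightly cleaner in that the orthogonal factor $V$ automatically supplies a full orthonormal basis of $Y$, avoiding the paper's separate treatment of indices with $\tilde P x_i=0$; your parenthetical guess about Proposition~\ref{halfreduc}, however, is off---the paper simply splits the index set $\{1,\dots,m\}$ into two halves rather than thresholding on the size of $\sigma_i$.
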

\begin{proof}
Define $P: \mathbb{R}^n \rightarrow Y$ to be the projection map onto $Y$. Let $\stilde P= P|_{X}$, and so, $\stilde P: X \rightarrow Y$. Define $A=\stilde P^* \stilde P: X \rightarrow X$. Then $A$ is self-adjoint, so the Spectral Theorem implies there is an orthonormal basis $\{x_1, \ldots, x_m\}$ of $X$ such that each $x_i$ is an eigenvector of $A$. Now, let $\stilde y_i=\stilde Px_i$ and $\sigma = \{1\leq i\leq m: \tilde y_i\neq 0\}$. For $i\in\sigma$ let $y_i=\|\stilde y_i\|^{-1}\stilde y_i$. Note for $i, j\in\sigma$ with $i \neq j$ that
$$\langle \tilde y_i, \tilde y_j \rangle= \langle \stilde Px_i, \stilde Px_j \rangle= \langle x_i, \stilde P^* \stilde P x_j \rangle = \langle x_i, Ax_j \rangle = \lambda_j \langle x_i, x_j \rangle =0$$
so the sequence $(y_i)_{i\in\sigma}$ is orthonormal. Extend $(y_i)_{i\in\sigma}$ to an orthonormal basis  $(y_i)_{i=1}^m$ of $Y$. It now remains to check that $\langle x_i, y_j \rangle =0$  for $i \neq j$. If $\stilde P x_i=0$, then $x_i$ is orthogonal to every vector in $Y$. Otherwise, notice that $u_i \in \mathbb{R}^n= Y \oplus Y^{\perp}$, and so we can express it as $x_i=\tilde y_i+y'_i$ where $y'_i = x_i-\tilde y_i \in Y^{\perp}$. Then, it follows:
\[\langle x_i, y_j \rangle = \langle \tilde y_i + y'_i , y_j \rangle = \langle \tilde y_i, y_j \rangle + \langle y'_i, y_j \rangle = 0.\qedhere\]
\end{proof}

We can now proceed with the proof of Proposition \ref{halfreduc}

\begin{proof}

Lemma \ref{perpendicular} yields orthonormal bases $\{x_1, \ldots , x_m\} $ and $\{y_1, \ldots, y_m\}$ of $X$ and $Y$ respectively, such that $\langle x_i, y_j \rangle =0$ when $i \neq j$. When $m$ is odd, define $\stilde X= \langle\{x_1, \ldots, x_{\lfloor \frac{m}{2}\rfloor}\}\rangle$ and $\stilde Y= \langle \{ y_{\lceil \frac{m}{2}\rceil}, \ldots, y_{m-1}, y_m \}\rangle$. Then $\langle x_i, y_j \rangle =0$ for all $1 \leq i \leq \lfloor \frac{m}{2} \rfloor, \lceil \frac{m}{2} \rceil \le j \le m$.

If $m$ is even, define $\stilde X= \langle\{x_1, \ldots, x_{\frac{m}{2}}\}\rangle$ and $\stilde Y= \langle \{ y_{\frac{m}{2}+1}, \ldots, y_{m-1}, y_m\}\rangle$. Then $\langle x_i, y_j \rangle =0$ for all $1 \leq i \leq  \frac{m}{2},  \frac{m}{2}+1 \le j \le m$. Thus, $\stilde X \perp \stilde Y$.
\end{proof}

By Proposition \ref{halfreduc}, given any two subspaces, we can always find orthogonal subspaces at the cost of reducing the dimension by half. The following result again appeals to Lemma \ref{perpendicular} in order to show that given two subspaces of a fixed dimension, it is possible to traverse from one subspace to another through a continuously varying choice of subspaces of the same dimension.

\begin{prop}\label{continuouschoice} 
Let $X, Y \subset \mathbb{R}^n$, with $\dim(X)=\dim(Y)=m$.  Then, for any $a<b\in\mathbb{R}$, there exists a continuous choice of subspaces $\{U(t)\}_{t \in [a,b]}$ of $\mathbb{R}^n$, such that $X=U(a)$ and $Y=U(b)$, and $U(t)$ lies in $X+Y$ for all $t \in [a,b]$.
\end{prop}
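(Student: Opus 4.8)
The plan is to use Lemma~\ref{perpendicular} to pick orthonormal bases $\{x_1,\dots,x_m\}$ of $X$ and $\{y_1,\dots,y_m\}$ of $Y$ satisfying the ``pairwise'' orthogonality condition $\langle x_i,y_j\rangle=0$ for $i\neq j$. The point of this normal form is that the interaction between the two bases decouples index by index: the only possibly nonzero inner products are the diagonal ones $\langle x_i,y_i\rangle$. So the construction of the path $U(t)$ reduces to $m$ independent one-dimensional problems, one for each coordinate pair $(x_i,y_i)$, and the resulting one-dimensional subspaces remain mutually orthogonal throughout, so their span always has dimension exactly $m$.

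Concretely, for each $i$ I would rotate $x_i$ into $y_i$ inside the (at most two-dimensional) plane they span. For those $i$ with $x_i=\pm y_i$ this is trivial; otherwise set $\theta_i = \arccos\langle x_i,y_i\rangle \in (0,\pi)$ (choosing the sign of $y_i$, which we are free to do, so that $\langle x_i,y_i\rangle\geq 0$, i.e.\ $\theta_i\in[0,\pi/2]$), let $z_i$ be the unit vector in $\mathrm{span}\{x_i,y_i\}$ orthogonal to $x_i$ on the side of $y_i$, and define
\[
u_i(t) = \cos\!\Big(\tfrac{t-a}{b-a}\,\theta_i\Big)\,x_i + \sin\!\Big(\tfrac{t-a}{b-a}\,\theta_i\Big)\,z_i, \qquad t\in[a,b],
\]
so that $u_i(a)=x_i$ and $u_i(b)=y_i$. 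Each $u_i:[a,b]\to\mathbb{R}^n$ is continuous, and $u_i(t)$ lies in $\mathrm{span}\{x_i,z_i\}=\mathrm{span}\{x_i,y_i\}\subset X+Y$. Then put $U(t) = \langle u_1(t),\dots,u_m(t)\rangle$, with $U(a)=X$ and $U(b)=Y$, and $U(t)\subset X+Y$ for all $t$.

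The key verification is that $\{u_1(t),\dots,u_m(t)\}$ is orthonormal for every $t$ — then it is automatically a basis, $\dim U(t)=m$ as required, and $P(t)=W(t)W(t)^T$ with $W(t)=[u_1(t)\cdots u_m(t)]$ gives a continuous projection-valued map, so $\{U(t)\}$ is a continuous choice of subspaces by Lemma~\ref{continuous gram-schmidt} (or directly by Definition~\ref{continuous spaces notions}). Each $u_i(t)$ has unit norm since $x_i\perp z_i$ and $\|x_i\|=\|z_i\|=1$. For $i\neq j$, expand $u_i(t)$ in terms of $x_i,y_i$ and $u_j(t)$ in terms of $x_j,y_j$; every resulting inner product is one of $\langle x_i,x_j\rangle$, $\langle x_i,y_j\rangle$, $\langle y_i,x_j\rangle$, $\langle y_i,y_j\rangle$, all of which vanish for $i\neq j$ (the first and last because $\{x_i\}$ and $\{y_i\}$ are orthonormal, the middle two by the conclusion of Lemma~\ref{perpendicular}). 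Hence $\langle u_i(t),u_j(t)\rangle=0$.

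I expect the only real subtlety — not an obstacle so much as a point requiring care — is the choice of the auxiliary vectors $z_i$ and the handling of degenerate pairs. When $x_i$ and $y_i$ are parallel ($\langle x_i,y_i\rangle = 1$ after the sign normalization, i.e.\ $x_i=y_i$) one simply takes $u_i(t)\equiv x_i$; the case $\langle x_i,y_i\rangle=-1$ is excluded by the sign choice, but if one insists on a fixed basis it can be handled by rotating in any plane containing $x_i$ and lying in $X+Y$ (for instance using another basis vector), which still stays inside $X+Y$. One should also note $z_i \in \mathrm{span}\{x_i,y_i\} \subseteq X+Y$, so that $U(t)\subseteq X+Y$ genuinely holds. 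Everything else is the routine trigonometric bookkeeping of a family of commuting planar rotations, which I would not spell out in full.
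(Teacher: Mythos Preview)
Your proof is correct and follows essentially the same approach as the paper: both use Lemma~\ref{perpendicular} to decouple the problem into $m$ independent planar interpolations in the mutually orthogonal planes $\mathrm{span}\{x_i,y_i\}$. The only cosmetic difference is that the paper interpolates linearly, taking $u_i(t) = (1-\lambda)x_i + \lambda y_i$ (yielding an orthogonal but not orthonormal frame), whereas you rotate, yielding an orthonormal frame throughout; either works.
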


\begin{proof}
Using Lemma \ref{perpendicular}, we can find an orthonormal bases $\{x_1, \ldots x_m\}$ and $\{y_1, \ldots y_m\}$ of $X$ and $Y$ respectively such that $\langle x_i, y_j \rangle =0$ whenever $i \neq j$. Now define for $1 \le i \le m$,

$$u_{i}(t)=\left\{\begin{array}{ll}
\left(1-\dfrac{t-a}{b-a}\right) x_i + \left(\dfrac{t-a}{b-a}\right)y_i & \text{if $x_i$, $y_i$ are linearly independent} \\
y_i & \text{if $x_i$, $y_i$ are linearly dependent.}
\end{array}\right.
$$

Let $U(t)= \langle u_1(t), u_2(t), \ldots, u_m(t) \rangle$. Observe that $U(a)=X$ and $U(b)=Y$. Notice that for any $t \in [a,b]$, at no point do we have linear dependence between the $u_i$'s as, in fact, they are always orthogonal. Thus, $\{U(t)\}_{t \in [a,b]}$ as defined above is our required family of subspaces.
\end{proof}

The following lemma guarantees the existence of a discrete collection of points in $\mathbb{R}^n$ such that the application of Theorem \ref{OGBT} on all points of this collection preserves the desired minimal stretch property on overlapping intervals covering $\mathbb{R}$.

\begin{lem}
\label{compactification}
Let $\varepsilon>0$ and $A:\mathbb{R}\to M_{n\times n}(\mathbb{R})$ be a continuous matrix function. Then there exists a sequence $(t_i)_{i\in\mathbb{Z}}$ in $\mathbb{R}$ such that the following are satisfied.

\begin{enumerate}[leftmargin=19pt,label=(\roman*)]

\item\label{compactification item1} For all $i\in\mathbb{Z}$, $t_i < t_{i+1}$, $\sup_it_i = \infty$, and $\inf_it_i = -\infty$. 

\item\label{compactification item2} For all $i\in\mathbb{Z}$ and $t$ in $[t_{i-1},t_{i+1}]$, $\|A(t_i) - A(t)\| \leq \varepsilon$. 

\end{enumerate}
In particular, if $i\in\mathbb{Z}$, $c>0$, and $U$ is a subspace of $\mathbb{R}^n$ such that $m_{A(t_i)|_U}\geq c$ then for all $t\in[t_{i-1},t_{i+1}]$, $m_{A(t)|_U}\geq c - \varepsilon$.
\end{lem}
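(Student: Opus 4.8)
The plan is to build the whole sequence by marching outward from the origin in steps governed by a single continuous, positive function $\tilde r:\mathbb{R}\to(0,\infty)$ with the property that $\|A(t)-A(s)\|\le\varepsilon$ whenever $|t-s|\le\tilde r(s)$. Granting such a sequence, the final ``in particular'' assertion is immediate: if $v\in U$ with $\|v\|=1$ and $m_{A(t_i)|_U}\ge c$, then for every $t\in[t_{i-1},t_{i+1}]$ we get $\|A(t)v\|\ge\|A(t_i)v\|-\|(A(t_i)-A(t))v\|\ge c-\|A(t_i)-A(t)\|\ge c-\varepsilon$ using \ref{compactification item2}. So the real work is producing $(t_i)_{i\in\mathbb Z}$.

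For each $s\in\mathbb{R}$ I would put
\[
r(s)=\sup\bigl\{\rho\in(0,1]:\ \|A(t)-A(s)\|\le\varepsilon\text{ for all }t\text{ with }|t-s|\le\rho\bigr\}.
\]
Continuity of $A$ at $s$ forces $r(s)>0$, and a one-line limiting argument upgrades the defining inequality to: $\|A(t)-A(s)\|\le\varepsilon$ for all $|t-s|\le r(s)$. Uniform continuity of $A$ on compacta gives a local lower bound for $r$: for each $s$ there is $\delta=\delta(s)\in(0,1]$ with $\|A(p)-A(q)\|\le\varepsilon$ whenever $p,q\in[s-2,s+2]$ and $|p-q|\le\delta$, hence $r(u)\ge\delta$ for every $u\in[s-1,s+1]$. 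I would then replace $r$ by its $1$-Lipschitz regularization
\[
\tilde r(s)=\inf_{u\in\mathbb{R}}\bigl(r(u)+|u-s|\bigr).
\]
As an infimum of the $1$-Lipschitz functions $s\mapsto r(u)+|u-s|$ it is itself $1$-Lipschitz, hence continuous, and $\tilde r(s)\le r(s)$ by taking $u=s$. Positivity is the one subtle point: if $|u-s|>1$ then $r(u)+|u-s|>1$, while if $|u-s|\le1$ then $r(u)\ge\delta(s)$, so $\tilde r(s)\ge\min(\delta(s),1)>0$. In particular $\|A(t)-A(s)\|\le\varepsilon$ whenever $|t-s|\le\tilde r(s)$.

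Now set $t_0=0$ and, recursively for $i\ge0$,
\[
t_{i+1}=t_i+\tfrac12\tilde r(t_i),\qquad t_{-(i+1)}=t_{-i}-\tfrac12\tilde r(t_{-i}).
\]
All steps are positive, so $(t_i)_{i\in\mathbb Z}$ is strictly increasing. If $\sup_i t_i=L<\infty$ then $\tilde r(t_i)=2(t_{i+1}-t_i)\to0$, contradicting $\tilde r(t_i)\to\tilde r(L)>0$ by continuity; arguing the same way on the left gives $\inf_i t_i=-\infty$, which is \ref{compactification item1}. For \ref{compactification item2}, fix $i$ and $t\in[t_{i-1},t_{i+1}]$; it suffices to show $|t-t_i|\le\tilde r(t_i)$, since then $\|A(t_i)-A(t)\|\le\varepsilon$ because $\tilde r(t_i)\le r(t_i)$. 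By construction the gap at $t_i$ pointing away from $0$ equals exactly $\tfrac12\tilde r(t_i)\le\tilde r(t_i)$, while the other gap has length $\tfrac12\tilde r(t_j)$ for the neighbour $t_j$ of $t_i$ nearer $0$; the $1$-Lipschitz estimate $\tilde r(t_j)\le\tilde r(t_i)+|t_i-t_j|=\tilde r(t_i)+\tfrac12\tilde r(t_j)$ rearranges to $\tfrac12\tilde r(t_j)\le\tilde r(t_i)$, so both gaps are at most $\tilde r(t_i)$ and hence $|t-t_i|\le\tilde r(t_i)$.

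The main obstacle is the tension between two requirements: the points must not accumulate (so that $\sup_i t_i=\infty$ and $\inf_i t_i=-\infty$), yet each ``backward'' half $[t_{i-1},t_i]$ of the double interval around $t_i$ must still lie within the $\varepsilon$-control radius of $t_i$, even though the step into $t_i$ was dictated by the possibly much smaller control radius at $t_{i-1}$. Passing from the raw radius $r$ to its $1$-Lipschitz regularization $\tilde r$ is exactly what reconciles the two, and the only genuinely delicate verification is that this regularization remains strictly positive everywhere — which is where local uniform continuity of $A$ enters.
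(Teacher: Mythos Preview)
Your argument is correct, but you have taken a more elaborate route than the paper. The paper simply partitions $\mathbb{R}$ into unit intervals $[j,j+1]$, uses uniform continuity of $A$ on each to pick $k_j\in\mathbb{N}$ with $\|A(s)-A(t)\|\le\varepsilon/2$ whenever $s,t\in[j,j+1]$ and $|s-t|\le1/k_j$, and then takes the union over $j\in\mathbb{Z}$ of the finite sets $\{j+\ell/k_j:0\le\ell<k_j\}$ as the sequence $(t_i)$; the $\varepsilon/2$ absorbs the mismatch in mesh sizes at the integer endpoints. This is shorter and entirely elementary. Your approach instead builds an intrinsic ``control radius'' $r$ and passes to its $1$-Lipschitz regularization $\tilde r$ before marching outward in half-steps; the Lipschitz property is exactly what makes the backward gap at $t_i$ comparable to $\tilde r(t_i)$. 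What your method buys is that it avoids the artificial unit-interval scaffolding and the $\varepsilon/2$ trick, and it would transplant to more general domains; the price is the extra machinery of the regularization and the need to verify $\tilde r>0$. One small omission: your gap analysis treats the case $i\neq 0$ (where there is a ``neighbour nearer $0$''); at $i=0$ both adjacent gaps equal $\tfrac12\tilde r(t_0)$ by construction, so the bound $|t-t_0|\le\tilde r(t_0)$ is immediate there.
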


\begin{proof}
By the uniform continuity of $A$ on each compact interval $[i,i+1]$, there exists $k_i\in\mathbb{N}$ such that for all $s,t$ in $[i,i+1]$ with $|s - t| \leq 1/k_i$, $\|A(s) - A(t)\| \leq \varepsilon/2$. Denote $F_i = \{i+j/k_i: j=0,1,\ldots,k_i-1\}$ and define $F = \cup_{i\in\mathbb{Z}}F_i$. Clearly, this is a discrete subset of $\mathbb{R}$ that is unbounded above and unbounded below. Enumerating it in increasing order yields the desired sequence $(t_i)_{i\in\mathbb{Z}}$ that satisfies \ref{compactification item1} and \ref{compactification item2}.

For the final part let $i\in\mathbb{Z}$, $c>0$, and $U$ be a subspace of $\mathbb{R}^n$ such that $m_{A(t_i)|_U}\geq c$. Then, for all $x\in U$ with $\|x\| = 1$ and $t\in[t_{i-1},t_{i+1}]$,
\[\|A(t)x\| \geq \|A(t_i)x\| - \|A(t_i) - A(t)\|\|x\|\geq c-\varepsilon.\]
In other words, $m_{A(t)|_U}\geq c - \varepsilon$.
 \end{proof}

We can now prove Theorem \ref{finalresultstatement}, which we restate for convenience.

\begin{thm}
\label{finalresult}
Let $A: \mathbb{R} \rightarrow M_{n \times n}$ be a continuous matrix function, satisfying the property that for all $t \in \mathbb{R}$, $\|A(t)e_i\|=1$ for every  $ 1 \le i \le n$.  Let $\Lambda = \sup_t \|A(t)\|$ and $\gamma\in(0,1)$. Then, there exists a continuous family of $m$-dimensional subspaces $\{U(t)\}_{t \in \mathbb R}$ where $m \ge (d_0n)/(7\Lambda^2)$ such that for every $t \in \mathbb{R},$ and every $v \in U(t)$,  $\|Av\| \geq \gamma c_0 \|v\|$.
\end{thm}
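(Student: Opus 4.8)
The plan is to fix $\varepsilon := (1-\gamma)c_0/2$, apply Lemma~\ref{compactification} to obtain a sequence $(t_i)_{i\in\mathbb Z}$ adapted to $A$ at scale $\varepsilon$, and then build $\{U(t)\}_{t\in\mathbb R}$ from suitable subspaces $V_i$ by setting $U(t_i)=V_i$ and interpolating between $V_i$ and $V_{i+1}$ on each $[t_i,t_{i+1}]$ via Proposition~\ref{continuouschoice}. First I would apply Theorem~\ref{OGBT} \emph{once} at each $t_i$ (the columns of $A(t_i)$ are unit vectors and $\|A(t_i)\|\le\Lambda$), obtaining $\sigma_i\subseteq\{1,\dots,n\}$ with $|\sigma_i|\ge d_0 n/\Lambda^2$ and $m_{A(t_i)|_{U_{\sigma_i}}}\ge c_0$; after passing to subsets, all $|\sigma_i|$ may be taken equal to a common $m\ge d_0 n/\Lambda^2$. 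By the final assertion of Lemma~\ref{compactification}, $m_{A(t)|_{U_{\sigma_i}}}\ge c_0-\varepsilon$ for every $t\in[t_{i-1},t_{i+1}]$, so $m_{A(t_i)|_{U_{\sigma_j}}}\ge c_0-\varepsilon$ for $j\in\{i-1,i,i+1\}$; in particular $A(t_i)$ is injective on each of $U_{\sigma_{i-1}},U_{\sigma_i},U_{\sigma_{i+1}}$.

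The heart of the matter is a combinatorial refinement: to find subspaces $V_i\subseteq U_{\sigma_i}$, all of one common dimension $d\ge m/7$, such that for every $i$ both $V_i\perp V_{i+1}$ in $\mathbb R^n$ \emph{and} $A(t_i)V_i\perp A(t_i)V_{i+1}$. Since these constraints only couple consecutive indices, I would carry this out by $2$-colouring $\mathbb Z$: for odd $i$ choose $V_i$ to be an arbitrary $d$-dimensional subspace of $U_{\sigma_i}$, and for even $i$ choose $V_i$ inside $U_{\sigma_i}$ so as to avoid the linear conditions imposed by the already-fixed neighbours $V_{i-1},V_{i+1}$ and by the subspaces $A(t_{i-1})V_{i-1}$, $A(t_i)V_{i+1}$. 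Using the injectivity noted above, each such condition removes from $U_{\sigma_i}$ a subspace of codimension at most $d$, so with at most six conditions per even index a dimension count leaves $\dim V_i\ge m-6d$, which is $\ge d$ once $d=\lfloor m/7\rfloor$; the orthogonalisations involved are exactly of the type encapsulated in Proposition~\ref{halfreduc} and Lemma~\ref{perpendicular}.

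Given the $V_i$, the stitching is immediate. On $[t_i,t_{i+1}]$ take the continuous family of $d$-dimensional subspaces that Proposition~\ref{continuouschoice} produces between $X=V_i$ and $Y=V_{i+1}$. Since $V_i\perp V_{i+1}$, the orthonormal bases $\{x_k\}$ of $V_i$ and $\{y_k\}$ of $V_{i+1}$ used there are jointly orthonormal, so a vector $v=\sum_k a_k((1-s)x_k+sy_k)\in U(t)$, where $s=(t-t_i)/(t_{i+1}-t_i)$, satisfies $\|v\|^2=((1-s)^2+s^2)\sum_k a_k^2$; writing $p=\sum_k a_k x_k\in V_i$, $q=\sum_k a_k y_k\in V_{i+1}$ and using $A(t_i)V_i\perp A(t_i)V_{i+1}$ together with $\|A(t_i)p\|\ge c_0\|p\|$ and $\|A(t_i)q\|\ge(c_0-\varepsilon)\|q\|$ gives
\[
\|A(t_i)v\|^2=(1-s)^2\|A(t_i)p\|^2+s^2\|A(t_i)q\|^2\ge (c_0-\varepsilon)^2\,((1-s)^2+s^2)\sum_k a_k^2=(c_0-\varepsilon)^2\|v\|^2 .
\]
As $[t_i,t_{i+1}]\subseteq[t_{i-1},t_{i+1}]$ we have $\|A(t)-A(t_i)\|\le\varepsilon$, whence $\|A(t)v\|\ge(c_0-2\varepsilon)\|v\|=\gamma c_0\|v\|$. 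The pieces agree at each $t_i$ (both give $V_i$) and each is a continuous choice of subspaces, so by Lemma~\ref{continuous gram-schmidt} and Theorem~\ref{projection yields basis} they glue to a continuous family on $\mathbb R$ of constant dimension $d\ge m/7\ge d_0 n/(7\Lambda^2)$.

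The real work is the second paragraph: simultaneously enforcing a Euclidean orthogonality and an $A(t_i)$-orthogonality between consecutive members of a bi-infinite family of subspaces while paying only a constant factor in dimension. Once that is arranged, the single pointwise use of Theorem~\ref{OGBT}, the compactification, and the interpolation are routine, and I would expect the particular constant $7$ to be an artifact of the bookkeeping in that step rather than anything essential.
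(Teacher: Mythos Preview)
Your argument is correct and is a genuinely different (and somewhat cleaner) execution of the same core idea. Both proofs pick one Bourgain--Tzafriri set $\sigma_i$ per node $t_i$, pass to smaller pieces that are orthogonal both in the domain and, at a fixed reference time, in the image under $A$, and then interpolate via Proposition~\ref{continuouschoice}. The paper does this interval by interval: it fixes a midpoint $s_i\in(t_i,t_{i+1})$, applies Proposition~\ref{halfreduc} once to the images $A(s_i)U_{\sigma_i},A(s_i)U_{\sigma_{i+1}}$ and once more to the preimages, producing for each $i$ a ``left'' piece $\tilde U_i^L$ and a ``right'' piece $\tilde U_{i+1}^R$ of dimension $\lfloor m_0/4\rfloor$; it then needs a second stitch inside $U_{\sigma_{i+1}}$ to go from $\tilde U_{i+1}^R$ to $\tilde U_{i+1}^L$, and the minimal-stretch estimate (Lemma~\ref{minimalstretch}) is obtained by bounding the cross term $\langle A(t)f,A(t)g\rangle$ through exact orthogonality at $s_i$ plus a perturbation. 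Your $2$-colouring replaces all of this with a single codimension count: fix the odd $V_j$ freely and, for each even $i$, choose $V_i\subset U_{\sigma_i}$ in the orthogonal complement of the projections of $V_{i\pm1}$ and of $A(t_{i-1})^{T}A(t_{i-1})V_{i-1}$, $A(t_i)^{T}A(t_i)V_{i+1}$ (these, not the images $A(\cdot)V_{\cdot}$ themselves, are the correct ``bad'' subspaces). This yields one subspace per index, eliminates the second stitch, and gives the stretch bound by exact orthogonality at $t_i$ followed by a single $\varepsilon$-perturbation. Two minor remarks: there are only four linear conditions, not six, so your count actually delivers $d=\lfloor m/5\rfloor$; and, as in the paper, you should add a line covering the case $\lfloor m/7\rfloor=0$ (take $U(t)=\langle e_1\rangle$) so that the floor does not spoil the bound $d\ge d_0 n/(7\Lambda^2)$.
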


\begin{proof}
To begin, note it is always possible to find a continuous choice of one-dimensional subspaces by taking $U(t) = \langle e_1\rangle$, for all $t\in\mathbb{R}$. Let $m_0 = \lceil d_0n/\Lambda^2 \rceil$. We will show that there is continuous choice of $\lfloor m_0/4\rfloor$-dimensional subspaces that satisfies the conclusion. Therefore, we can achieve $1\vee\lfloor m_0/4\rfloor$ which dominates $m_0/7$ for all possible values of $m_0$.

Apply Lemma \ref{compactification} to $A$ for $0<\varepsilon < (1-c)c_0$ to find an increasing sequence of points $(t_i)_{i \in \mathbb Z}$ satisfying the conclusion of that lemma. We will later specify that $\varepsilon$ needs in fact to be taken even smaller. For each $i\in\mathbb{Z}$ apply Theorem \ref{OGBT} to $A(t_i)$ to obtain a set $\sigma_i$ of a common cardinality $m_0 \geq d_0n/\Lambda^2$. For each $i\in\mathbb{Z}$, by letting $U_i = \langle e_{j}: j \in \sigma_{i}\rangle$, we have for all $t\in[t_{i-1},t_{i+1}]$ that $m_{A(t)|_{U_i}}\geq \gamma c_0$. 

Let $i \in \mathbb Z$ be given. We will outline the mechanism by which one produces the required subspaces on $[t_i, t_{i+1}]$, and then conclude by extending the construction to all of $\mathbb R$ by working on each interval and ``stitching'' together at the boundaries.   Given $i$ and $t\in[t_{i-1},t_{i+1}]$ let $V_{i,t} :=\langle A(t)e_{j} : j \in \sigma_{i}\rangle = A(t)(U_i)$.

A diagram illustrating the choice of subspaces on the interval $[t_i, t_{i+2}]$ is included below for the reader's convenience.

\vspace{0.2in}

\tikzset{every picture/.style={line width=0.75pt}} %set default line width to 0.75pt        

\begin{tikzpicture}[x=0.75pt,y=0.75pt,yscale=-1,xscale=1]
%uncomment if require: \path (0,300); %set diagram left start at 0, and has height of 300

%Straight Lines [id:da8617071357803388] 
\draw    (99.63,199.79) -- (581.01,200.65) ;
%Straight Lines [id:da2897108822762964] 
\draw    (99.63,199.79) -- (99.83,210.75) ;
%Straight Lines [id:da5687516729342004] 
\draw    (340.32,200.22) -- (340.32,210.08) ;
%Straight Lines [id:da09204653008445907] 
\draw    (220.15,199.89) -- (220.15,209.75) ;
%Straight Lines [id:da4889252060508673] 
\draw    (581.01,200.65) -- (581.01,210.51) ;
%Straight Lines [id:da2583773313617783] 
\draw    (470.32,200.56) -- (470.32,210.42) ;
%Straight Lines [id:da03520762515588882] 
\draw    (189.77,199.86) -- (250.53,199.92) ;
\draw [shift={(250.53,199.92)}, rotate = 180.06] [color={rgb, 255:red, 0; green, 0; blue, 0 }  ][line width=0.75]      (5.59,-5.59) .. controls (2.5,-5.59) and (0,-3.09) .. (0,0) .. controls (0,3.09) and (2.5,5.59) .. (5.59,5.59) ;
\draw [shift={(189.77,199.86)}, rotate = 0.06] [color={rgb, 255:red, 0; green, 0; blue, 0 }  ][line width=0.75]      (5.59,-5.59) .. controls (2.5,-5.59) and (0,-3.09) .. (0,0) .. controls (0,3.09) and (2.5,5.59) .. (5.59,5.59) ;
%Straight Lines [id:da3226253339086156] 
\draw    (439.94,200.52) -- (500.7,200.59) ;
\draw [shift={(500.7,200.59)}, rotate = 180.06] [color={rgb, 255:red, 0; green, 0; blue, 0 }  ][line width=0.75]      (5.59,-5.59) .. controls (2.5,-5.59) and (0,-3.09) .. (0,0) .. controls (0,3.09) and (2.5,5.59) .. (5.59,5.59) ;
\draw [shift={(439.94,200.52)}, rotate = 0.06] [color={rgb, 255:red, 0; green, 0; blue, 0 }  ][line width=0.75]      (5.59,-5.59) .. controls (2.5,-5.59) and (0,-3.09) .. (0,0) .. controls (0,3.09) and (2.5,5.59) .. (5.59,5.59) ;
%Straight Lines [id:da7929902443011172] 
\draw  [dash pattern={on 4.5pt off 4.5pt}]  (100.5,90.33) -- (190.5,90.33) ;
%Straight Lines [id:da2585886197061671] 
\draw  [dash pattern={on 4.5pt off 4.5pt}]  (190.5,90.33) -- (249.5,150.33) ;
%Straight Lines [id:da5199519619498991] 
\draw  [dash pattern={on 4.5pt off 4.5pt}]  (249.5,150.33) -- (310.5,149.67) ;
%Straight Lines [id:da1749399111050196] 
\draw    (310.5,200.33) -- (340.32,200.22) ;
\draw [shift={(340.32,200.22)}, rotate = 359.79] [color={rgb, 255:red, 0; green, 0; blue, 0 }  ][line width=0.75]    (-5.59,0) -- (5.59,0)(0,5.59) -- (0,-5.59)   ;
\draw [shift={(310.5,200.33)}, rotate = 359.79] [color={rgb, 255:red, 0; green, 0; blue, 0 }  ][line width=0.75]      (5.59,-5.59) .. controls (2.5,-5.59) and (0,-3.09) .. (0,0) .. controls (0,3.09) and (2.5,5.59) .. (5.59,5.59) ;
%Straight Lines [id:da39209514850559324] 
\draw    (550.5,200.67) -- (581.01,200.65) ;
\draw [shift={(581.01,200.65)}, rotate = 539.97] [color={rgb, 255:red, 0; green, 0; blue, 0 }  ][line width=0.75]    (0,5.59) -- (0,-5.59)   ;
\draw [shift={(550.5,200.67)}, rotate = 359.97] [color={rgb, 255:red, 0; green, 0; blue, 0 }  ][line width=0.75]      (5.59,-5.59) .. controls (2.5,-5.59) and (0,-3.09) .. (0,0) .. controls (0,3.09) and (2.5,5.59) .. (5.59,5.59) ;
%Straight Lines [id:da5522405002885349] 
\draw  [dash pattern={on 4.5pt off 4.5pt}]  (340.5,90.33) -- (310.5,149.67) ;
%Straight Lines [id:da2149824155000355] 
\draw  [dash pattern={on 4.5pt off 4.5pt}]  (340.5,90.33) -- (440.5,90.33) ;
%Straight Lines [id:da9445964621338874] 
\draw  [dash pattern={on 4.5pt off 4.5pt}]  (440.5,90.33) -- (500.5,150.33) ;
%Straight Lines [id:da6738517562207154] 
\draw  [dash pattern={on 4.5pt off 4.5pt}]  (500.5,150.33) -- (539.49,150.59) -- (550.5,150.67) ;
%Straight Lines [id:da35924185954504995] 
\draw  [dash pattern={on 4.5pt off 4.5pt}]  (550.5,150.67) -- (579.5,90.67) ;
%Straight Lines [id:da9069354625164512] 
\draw  [dash pattern={on 0.84pt off 2.51pt}]  (219.5,86.67) -- (220,120.33) ;
%Straight Lines [id:da6156321220792123] 
\draw  [dash pattern={on 0.84pt off 2.51pt}]  (470.5,120.33) -- (470.5,112.33) -- (470.5,88.67) ;

% Text Node
\draw (93,233.4) node [anchor=north west][inner sep=0.75pt]    {$\sigma _{i}$};
% Text Node
\draw (324,235.4) node [anchor=north west][inner sep=0.75pt]    {$\sigma _{i}{}_{+}{}_{1}$};
% Text Node
\draw (564,236.4) node [anchor=north west][inner sep=0.75pt]    {$\sigma _{i}{}_{+}{}_{2}$};
% Text Node
\draw (94,215.4) node [anchor=north west][inner sep=0.75pt]    {$t_{i}$};
% Text Node
\draw (326,216.4) node [anchor=north west][inner sep=0.75pt]    {$t_{i}{}_{+}{}_{1}$};
% Text Node
\draw (566,215.4) node [anchor=north west][inner sep=0.75pt]    {$t_{i}{}_{+}{}_{2}$};
% Text Node
\draw (213,215.4) node [anchor=north west][inner sep=0.75pt]    {$s_{i}$};
% Text Node
\draw (455,215.4) node [anchor=north west][inner sep=0.75pt]    {$s_{i}{}_{+}{}_{1}$};
% Text Node
\draw (136,96.4) node [anchor=north west][inner sep=0.75pt]    {$U_{i}^{L}$};
% Text Node
\draw (266,158.4) node [anchor=north west][inner sep=0.75pt]    {$U_{i+1}^{R}$};
% Text Node
\draw (374,96.4) node [anchor=north west][inner sep=0.75pt]    {$U_{i+1}^{L}$};
% Text Node
\draw (510,158.4) node [anchor=north west][inner sep=0.75pt]    {$U{_{i+2}^{R}}$};
% Text Node
\draw (139,94) node [anchor=north west][inner sep=0.75pt]    {$\widetilde{\ \ }$};
% Text Node
\draw (269,156) node [anchor=north west][inner sep=0.75pt]    {$\widetilde{\ \ }$};
% Text Node
\draw (377,94) node [anchor=north west][inner sep=0.75pt]    {$\widetilde{\ \ }$};
% Text Node
\draw (513,156) node [anchor=north west][inner sep=0.75pt]    {$\widetilde{\ \ }$};
% Text Node
\draw (90,40.4) node [anchor=north west][inner sep=0.75pt]    {$U_{i}$};
% Text Node
\draw (322,40.4) node [anchor=north west][inner sep=0.75pt]    {$U_{i}{}_{+}{}_{1}$};
% Text Node
\draw (561,39.4) node [anchor=north west][inner sep=0.75pt]    {$U_{i}{}_{+}{}_{2}$};
% Text Node
\draw (194,61.4) node [anchor=north west][inner sep=0.75pt]  [font=\scriptsize]  {$V_{i}^{L} \perp \ V_{i+1}^{R}$};
% Text Node
\draw (438,42.4) node [anchor=north west][inner sep=0.75pt]  [font=\scriptsize]  {$U_{i+1}^{L} \perp \ U_{i+2}^{R}$};
% Text Node
\draw (194,41.4) node [anchor=north west][inner sep=0.75pt]  [font=\scriptsize]  {$U_{i}^{L} \perp \ U_{i+1}^{R}$};
% Text Node
\draw (438,62.4) node [anchor=north west][inner sep=0.75pt]  [font=\scriptsize]  {$V_{i+1}^{L} \perp \ V_{i+2}^{R}$};
% Text Node
\draw (196,59) node [anchor=north west][inner sep=0.75pt]    {$\tilde{\ }$};
% Text Node
\draw (233,59) node [anchor=north west][inner sep=0.75pt]    {$\tilde{}$};
% Text Node
\draw (198,39) node [anchor=north west][inner sep=0.75pt]    {$\tilde{}$};
% Text Node
\draw (233,39) node [anchor=north west][inner sep=0.75pt]    {$\tilde{}$};
% Text Node
\draw (442,60) node [anchor=north west][inner sep=0.75pt]    {$\tilde{}$};
% Text Node
\draw (442,40) node [anchor=north west][inner sep=0.75pt]    {$\tilde{}$};
% Text Node
\draw (487,40) node [anchor=north west][inner sep=0.75pt]    {$\tilde{}$};
% Text Node
\draw (483,60) node [anchor=north west][inner sep=0.75pt]    {$\tilde{}$};

\end{tikzpicture}
\vspace{0.2in}

Fix an $s_i \in (t_i, t_{i+1})$. At $s_i$, we apply Proposition \ref{halfreduc} to the subspaces $V_{i, s_i}$ and $V_{i+1, s_i}$ to obtain subspaces $\stilde V^{L}_{i, s_i} \subset V_{i,s_i}$ and $\stilde V^{R}_{i+1, s_i} \subset V_{i+1, s_i}$ such that $\stilde V^{L}_{i, s_i} \perp \stilde V^{R}_{i+1, s_i}$ and $\dim \stilde V^L_{i, s_i}= \dim \stilde V_{i+1, s_i}^R= \lfloor \frac{m_0}{2} \rfloor$.

We once more invoke Proposition \ref{halfreduc} at the pre-images of $\stilde V_{i+1, s_i}^R$ and $\stilde V_{i, s_i}^L$ under $A$ to obtain subspaces $\tilde U^{L}_{i} \subset (A(s_i)|_{U_i})^{-1}\left(\stilde V_{i+1, s_i}^R\right) \subset U_{i}$ and $\stilde U^{R}_{i+1} \subset (A(s_i)|_{U_{i+1}})^{-1}\left(\stilde V_{i, s_i}^L\right)  \subset U_{i+1}$ such that $\stilde U^{L}_{i} \perp \stilde U^{R}_{i+1}$. The iterative application of Proposition \ref{halfreduc} incurs a further loss of dimension: for all $t \in [t_i, t_{i+1}],$ $$\dim \stilde U_{i, t}^L= \dim \stilde U_{i+1, t}^R =\left\lfloor\frac{1}{2}\left\lfloor\frac{m_0}{2}\right\rfloor\right\rfloor =  \left \lfloor \frac{m_0}{4} \right \rfloor.$$

Now, we must find a way to continuously ``stitch'' $\stilde U_i^L$ and $\stilde U_{i+1}^R$ around $s_i$, and similarly between $\stilde U_{i+1}^R$ and $\stilde U_{i+1}^L$ to pass to the next interval. This second ``stitch'' will be necessary as there is no reason to suppose that $\stilde U_{i+1}^R = \stilde U_{i+1}^L$. 

To this end, we pick some $\eta_i \in (0, \min \{\frac{s_i - t_i}{2}, \frac{t_{i+1} - s_i}{2}\})$. This will be the margin through at which the ``stitching'' occurs (that is, there will be two instances of ``stitching together'': on $(s_i - \eta_i, s_i + \eta_i)$ and on $(t_{i+1} - \eta_i, t_{i+1}]$). 

Observe that an application of Proposition \ref{continuouschoice} allows us to switch from $\stilde U^{L}_{i}$ at $s_i - \eta_i$ to $ \stilde U^{R}_{i+1}$ at $s_i+\eta_i$, through subspaces of dimension $m_0$ without violating the minimal stretch property as the collection of subspaces are contained in $\stilde U_{i}^L \oplus \stilde U_{i+1}^{R}$. This will be verified in Lemma \ref{minimalstretch}. 

It therefore remains only to ``stitch'' together on $(t_{i+1}-\eta_i, t_{i+1}] \subsetneq (s_i, t_{i+1}]$. Notice that $\stilde U_{i+1}^R \subset U_{i+1}$, and $\stilde U_{i+1}^L \subset U_{i+1}$. As the minimal stretch property holds for all vectors in $U_{i+1}$, the minimal stretch property also holds in $\stilde U_{i+1}^R+\stilde U_{i+1}^L$. Thus, we may appeal to Proposition \ref{continuouschoice} once again, and find a  collection of subspaces of dimension $\lfloor \frac{m_0}{4} \rfloor$ to traverse from $\stilde U_{i+1}^R$ at $t_{i+1}-\eta_i$, to $\stilde U_{i+1}^L$ at $t_{i+1}$.

To recapitulate, on each $[t_i, t_{i+1}]$, we take:
\begin{align*}
    \stilde U_{i}^L & \text{ for } t \in [t_i, s_i -\eta_i], \\
    \text{``stitch'' } \stilde U_{i}^L \text{ and } \stilde U_{i+1}^R & \text{ for } t \in (s_i -\eta_i, s_i + \eta_i), \\
    \stilde U_{i+1}^R & \text{ for } t \in [s_i + \eta_i, t_{i+1} - \eta_i], \\
    \text{``stitch'' } \stilde U_{i+1}^R \text{ and } \stilde U_{i+1}^L & \text{ for } t \in (t_{i+1} - \eta_i, t_{i+1}), \\
    \stilde U_{i+1, t}^L & \text{ for } t = t_{i+1}. \qedhere
\end{align*}
\end{proof}

\begin{lem} \label{minimalstretch}
For any vector $x$ lying in $\stilde U_i^L\oplus\stilde U_{i+1}^R, \ i \in \mathbb{Z}$, the vector $x$ satisfies $\|A(t)x\|\ge \gamma c_0\|x\|$, when $t \in [t_i, t_{i+1}]$.
\end{lem}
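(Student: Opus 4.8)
The plan is to reduce the statement to an elementary perturbation estimate resting on two features of the construction: the spaces $\stilde U_i^L$ and $\stilde U_{i+1}^R$ are mutually orthogonal (this was arranged by the second application of Proposition~\ref{halfreduc}), and at the single point $s_i$ their images under $A(s_i)$ are mutually orthogonal as well. So first I would fix $t\in[t_i,t_{i+1}]$ and a nonzero $x\in\stilde U_i^L\oplus\stilde U_{i+1}^R$, and write $x=y+z$ with $y\in\stilde U_i^L$ and $z\in\stilde U_{i+1}^R$; since $\stilde U_i^L\perp\stilde U_{i+1}^R$, this decomposition is unique and $\|x\|^2=\|y\|^2+\|z\|^2$. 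Now $y\in\stilde U_i^L\subseteq U_i$, and by Theorem~\ref{OGBT} applied at $t_i$ together with Lemma~\ref{compactification} we have $m_{A(t)|_{U_i}}\geq c_0-\varepsilon$ for every $t\in[t_{i-1},t_{i+1}]\supseteq[t_i,t_{i+1}]$, so $\|A(t)y\|\geq(c_0-\varepsilon)\|y\|$; symmetrically $z\in\stilde U_{i+1}^R\subseteq U_{i+1}$ and $m_{A(t)|_{U_{i+1}}}\geq c_0-\varepsilon$ on $[t_i,t_{i+2}]\supseteq[t_i,t_{i+1}]$, so $\|A(t)z\|\geq(c_0-\varepsilon)\|z\|$. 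Expanding the square,
\[
\|A(t)x\|^2=\|A(t)y\|^2+2\langle A(t)y,A(t)z\rangle+\|A(t)z\|^2\geq(c_0-\varepsilon)^2\|x\|^2+2\langle A(t)y,A(t)z\rangle,
\]
so the whole argument reduces to bounding the cross term.

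For the cross term, the essential observation is that by construction $A(s_i)(\stilde U_i^L)\subseteq\stilde V^{R}_{i+1,s_i}$ and $A(s_i)(\stilde U_{i+1}^R)\subseteq\stilde V^{L}_{i,s_i}$, while $\stilde V^{L}_{i,s_i}\perp\stilde V^{R}_{i+1,s_i}$ by Proposition~\ref{halfreduc}; hence $\langle A(s_i)y,A(s_i)z\rangle=0$. For a general $t\in[t_i,t_{i+1}]$ I would use that $A$ varies very little on this interval: since $s_i$ and $t$ both lie in $[t_{i-1},t_{i+1}]$, Lemma~\ref{compactification} (via the triangle inequality through $A(t_i)$) gives $\|A(t)-A(s_i)\|\leq 2\varepsilon$. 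Writing $A(t)=A(s_i)+E$ with $\|E\|\leq 2\varepsilon$, subtracting the vanishing inner product, and using $\|A(s_i)\|,\|A(t)\|\leq\Lambda$ together with $2\|y\|\|z\|\leq\|y\|^2+\|z\|^2=\|x\|^2$, one obtains
\[
|\langle A(t)y,A(t)z\rangle|=|\langle Ey,A(t)z\rangle+\langle A(s_i)y,Ez\rangle|\leq\|Ey\|\,\|A(t)z\|+\|A(s_i)y\|\,\|Ez\|\leq 4\Lambda\varepsilon\,\|y\|\,\|z\|\leq 2\Lambda\varepsilon\,\|x\|^2.
\]

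Combining the two displays gives $\|A(t)x\|^2\geq\big((c_0-\varepsilon)^2-4\Lambda\varepsilon\big)\|x\|^2$, and the coefficient on the right tends to $c_0^2>(\gamma c_0)^2$ as $\varepsilon\to 0^+$. The proof then finishes by imposing on $\varepsilon$, in addition to the condition $\varepsilon<(1-\gamma)c_0$ already used in the proof of Theorem~\ref{finalresult}, the further requirement $(c_0-\varepsilon)^2-4\Lambda\varepsilon\geq(\gamma c_0)^2$ (for instance any $\varepsilon\leq c_0^2(1-\gamma^2)/(2c_0+4\Lambda)$ works); this is exactly the ``$\varepsilon$ taken even smaller'' promised there, and since it depends only on $c_0$, $\gamma$, $\Lambda$, it is uniform over $i\in\mathbb{Z}$. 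The one genuine obstacle is precisely this cross term: orthogonality of the $A$-images is built in only at the single point $s_i$, not throughout $[t_i,t_{i+1}]$, so the resulting error must be absorbed into the slack between the true lower bound $c_0-\varepsilon$ available on $U_i$ and $U_{i+1}$ and the weaker target $\gamma c_0$---which is why the points $(t_i)$ were chosen via Lemma~\ref{compactification} to make $A$ nearly constant on consecutive intervals.
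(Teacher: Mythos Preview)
Your proof is correct and follows essentially the same route as the paper's: decompose $x$ orthogonally, use the minimal stretch on $U_i$ and $U_{i+1}$ for the diagonal terms, and control the cross term via the orthogonality of the $A(s_i)$-images together with a perturbation estimate. You are in fact slightly more careful than the paper with constants---using $c_0-\varepsilon$ rather than $c_0$ for the pointwise stretch, and $2\varepsilon$ rather than $\varepsilon$ for $\|A(t)-A(s_i)\|$ via a triangle inequality through $A(t_i)$---which changes only the final admissible $\varepsilon$ and not the argument.
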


\begin{proof}
If $x \in \stilde U_{i}^L\oplus \stilde U_{i+1}^R$, then $x=f+g$, where $f \in \stilde U_i^L$ and $g \in U_{i+1}^R$. As $\stilde U_i^L$ and $\stilde U_{i+1}^R$ satisfy the minimal stretch property, we know that $\|A(t)f\| \ge c_0 \|f\|$ for $t \in [t_{i-1},t_{i+1}]$, and $\|A(t)g\| \ge c_0 \|g\|$ for $t \in [t_{i},t_{i+2}]$. Then for $t \in [t_{i-1},t_{i+1}]$, we have

\begin{align}
    \|A(t)x\|^2&=\|A(t)f\|^2+\|A(t)g\|^2+2\langle A(t)f, A(t)g\rangle \nonumber\\
    &\geq c_0^2\|f\|^2+ c_0^2\|g\|^2 + 2\langle A(t)f, A(t)g\rangle \nonumber\\
    &=c_0^2 \|x\|^2 +2\langle A(t)f, A(t)g\rangle. \label{eq7}
\end{align} 
Now recall that  $A(s_i)f \in \stilde V_{i+1, s_i}^R$ and $A(s_i)g \in \stilde V_{i, s_i}^L$ so
\begin{align*}
    |\langle A(t)f, A(t)g\rangle| &=|\langle A(t)f-A(s_i)f, A(t)g \rangle + \langle A(s_i)f, A(t)g -A(s_i)g \rangle+ \langle A(s_i)f, A(s_i)g \rangle|\\
    &\le \big(\big\|A(t)-A(s_i)\big\|\|f\|\big)\big(\big\|A(t)\big\|\|g\|\big)+ \big(\big\|A(s_i)\big\|\|f\|\big)\big(\big\|A(t)-A(s_i)\big\|\|g\|\big)\\
    &\phantom{\le} + 0\\
    &\leq \Lambda\|A(t)-A(s_i)\|\|x\|^2.
\end{align*}
From Lemma \ref{compactification} \ref{compactification item2}, we know that $\|A(t)-A(s_i)\| \le \varepsilon$. Substituting this in (\ref{eq7}), we obtain
\begin{align*}
    \|A(t)x\|^2 &\ge c_0^2\|x\|^2\Big(1-2\Lambda \varepsilon c_0^{-2}\Big).
\end{align*}
By taking $\varepsilon = c_0^2(1-\gamma^2)/(2\Lambda)$ in Lemma \ref{compactification}, we obtain $\|A(t)x\| \ge \gamma c_0\|x\|.$\end{proof}

\begin{rem}
\label{optimal dimension}
We sketch an argument that demonstrates that the dimensional estimate $n/\|A\|^2$ in Theorem \ref{OGBT} (and hence also in Theorem \ref{finalresultstatement}) is optimal, up to a constant. Assume that for $n\in\mathbb{N}$ and $1\leq \lambda \leq \sqrt{n}$, there is a $m(n,\lambda)\in\mathbb{N}$ such that for any $n\times n$ matrix $A$ with unit-length columns and $\|A\|\leq \lambda$, there exists $\sigma\subset\{1,\ldots,n\}$ with $|\sigma|\geq m(n,\lambda)$ and $m_{A|_{U_\sigma}} >0$. We will show that necessarily $m(n,\lambda) < 4 n/\lambda^2$. To find an $A$ that confirms this, take $m = \lceil n/\lambda^2 \rceil$ and using Euclidean division write $n = dm+r$. Find an orthonormal sequence $u_1,\ldots,u_m$ in $\mathbb{R}^n$ such that $\langle u_1,\ldots,u_m\rangle\perp \langle e_1,\ldots,e_r\rangle$. Define $U = [u_1\cdots u_m]\in M_{n\times m}$, $B = [U|\cdots|U]\in M_{n\times dm}$ (take $d$ copies of $U$), $D = [e_1\cdots e_r]\in M_{n\times r}$ and $A = [D|B]\in M_{n\times n}$. Then, $A$ has unit-length columns, $\|A\| = \sqrt{d}\leq\lambda$, and $\rank(A) = m+r < 2m\leq 4n/\lambda^2$. Therefore, for any $\sigma\subset\{1,\ldots,n\}$ such that $m_{A|_{U_\sigma}}>0$ must satisfy $m(n,\lambda)\leq |\sigma| \leq \rank(A) < 4n/\lambda^2$.
\end{rem}

\subsection{Continuous factorization of the identity}
In this subsection we explain how Theorem \ref{finalresultstatement} yields a solution to Problem \ref{equivalentquestion}. We begin with a lemma that will help us find the left matrix in the factorization of the identity.

\begin{lem}
\label{left} Let $I$ be an interval of $\mathbb{R}$ and $A:I\to M_{n \times m}$ be a continuous matrix function such that there exists $c>0$ with  $m_{A(t)} \geq c$ for all $t$. Then there exists a continuous matrix function $L:\mathbb{R}\to M_{m\times n}$ such that $L(t)A(t)=I_m$ and $\|L(t)\|\leq 1/c$ for all $t\in I$.
\end{lem}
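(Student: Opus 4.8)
The plan is to take $L(t)$ to be the Moore--Penrose pseudoinverse of $A(t)$, which in this situation is a genuine left inverse because the lower bound $m_{A(t)}\ge c>0$ forces $A(t)$ to have full column rank. First I would record the elementary consequences of the hypothesis: for each $t$ the inequality $m_{A(t)}\ge c>0$ says $A(t)$ is injective, so $\operatorname{rank}(A(t))=m$ (in particular $n\ge m$), and hence the $m\times m$ matrix $A(t)^TA(t)$ is invertible. Moreover, using the description of $m_{A(t)}$ and $\|A(t)\|$ via the singular values of $A(t)$ recorded after Definition \ref{normdefs}, the eigenvalues of the symmetric positive-definite matrix $A(t)^TA(t)$ are exactly the squares of the singular values of $A(t)$, so its smallest eigenvalue equals $m_{A(t)}^2\ge c^2$ and consequently $\|(A(t)^TA(t))^{-1}\|=m_{A(t)}^{-2}\le c^{-2}$.

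Next I would define
\[
L(t)=\bigl(A(t)^TA(t)\bigr)^{-1}A(t)^T,\qquad t\in I,
\]
and verify the three required properties. For continuity: $t\mapsto A(t)^T$ is continuous, hence so is $t\mapsto A(t)^TA(t)$; this latter function takes values in the open set of invertible $m\times m$ matrices, on which matrix inversion is continuous, so $t\mapsto(A(t)^TA(t))^{-1}$ is continuous; then $L$ is a product of continuous matrix functions and is therefore continuous. For the left-inverse property: $L(t)A(t)=(A(t)^TA(t))^{-1}(A(t)^TA(t))=I_m$. For the norm bound I would compute
\[
L(t)L(t)^T=\bigl(A(t)^TA(t)\bigr)^{-1}A(t)^TA(t)\bigl(A(t)^TA(t)\bigr)^{-1}=\bigl(A(t)^TA(t)\bigr)^{-1},
\]
so $\|L(t)\|^2=\|L(t)L(t)^T\|=\|(A(t)^TA(t))^{-1}\|\le c^{-2}$, i.e.\ $\|L(t)\|\le 1/c$.

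I do not expect a genuine obstacle here; the argument is essentially the observation that the pseudoinverse is continuous on the locus where $A(t)$ has constant full column rank, combined with the standard fact that it attains the optimal operator-norm bound $1/m_{A(t)}$. The only points that deserve a sentence of justification are (i) that the smallest eigenvalue of $A(t)^TA(t)$ is $m_{A(t)}^2$ and the largest singular value of a matrix $M$ satisfies $\|M\|^2=\|MM^T\|$, both immediate from the singular value decomposition, and (ii) that inversion is continuous on $\mathrm{GL}_m(\mathbb R)$, which follows from Cramer's rule together with continuity of the determinant (as already used in the proof of Lemma \ref{7b-lem1}).
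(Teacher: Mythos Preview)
Your proposal is correct and follows essentially the same approach as the paper: both define $L(t)=(A(t)^TA(t))^{-1}A(t)^T$, verify continuity via continuity of inversion on invertible matrices, and check $L(t)A(t)=I_m$ directly. The only cosmetic difference is in the norm estimate: the paper writes out the SVD $A(t)=U\Sigma V^T$ and computes $L(t)=V\tilde\Sigma U^T$ explicitly to read off $\|L(t)\|=\max_i\sigma_i^{-1}\le 1/c$, whereas you use the equivalent identity $L(t)L(t)^T=(A(t)^TA(t))^{-1}$ together with the eigenvalue bound---both routes amount to the same singular-value computation.
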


\begin{proof}
For each fixed $t\in I$ we have that $m_{A(t)}>0$ and thus $A(t)$ has trivial kernel. This implies that the $m\times m$ matrix $A^T(t)A(t)$ has trivial kernel and is thus invertible. Since the matrix function $A^TA:I\to M_{m\times m}$ is continuous and pointwise invertible, $(A^TA)^{-1}:I\to M_{m\times m}$ is continuous as well (see, e.g., \cite[Lemma 3.2]{dai:hore:jiao:lan:motakis:2020}). Then, $L = (A^TA)^{-1}A^T:I\to M_{m\times n}$ is continuous and for each $t\in I$, $L(t)A(t) = I_m$. To find $\| L(t) \|$, for fixed $t\in I$, write the singular value decomposition
$A(t) = U \Sigma V^T$ where $U, V$ are unitary and $\Sigma$ is rectangular diagonal. A direct computation yields $L(t) = V\tilde \Sigma U^T$, where $\tilde \Sigma = (\Sigma^T\Sigma)^{-1}\Sigma^T$, which is the matrix formed by taking reciprocals of all non-zero diagonal elements of $\Sigma$ and then taking the transpose. Since $V, U$ are unitary and thus preserve the matrix norm under multiplication, $\|L(t)\| = \|\tilde \Sigma\| \leq 1/c$.
\end{proof}

To find the right matrix, we will use Theorem \ref{finalresultstatement} and Theorem \ref{projection yields basis}.

\begin{thm}
\label{C-factor_min-stretch}
Let $A:\mathbb{R}\to M_{n \times n}$ be a continuous matrix function and $\theta>0$ with $\|A(t)e_i\|\geq \theta$ for $1\leq i\leq n$ and $\|A(t)\| \leq 1$ for $t \in \mathbb{R}$. Then for any $\gamma\in(0,1)$ and $m\leq \lceil (d_0/7)n\theta^2\rceil$ there exist continuous matrix functions $L,R$ of appropriate dimensions such that $L(t)A(t)R(t)=I_m$ with $\|L(t)\| \|R(t)\|\leq C/\theta$ for all $t\in\mathbb{R}$, where $C = (\gamma c_0)^{-1}$.
\end{thm}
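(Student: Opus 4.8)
The idea is to manufacture $R$ first, using Theorem \ref{finalresultstatement} together with Theorem \ref{projection yields basis}, and then invoke Lemma \ref{left} to produce $L$. I would proceed as follows.

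First, I would reduce to the case $m = \lceil (d_0/7)n\theta^2\rceil$: it suffices to find the factorization for the largest admissible value of $m$, since any smaller $m$ is handled by restricting $R$ and $L$ to the first $m$ coordinates of $I_{\lceil(d_0/7)n\theta^2\rceil}$ (equivalently, postcomposing with the coordinate projection and its inclusion, which are constant matrix functions of norm $1$). Next, normalize the columns: let $B(t) = A(t)\, \mathrm{diag}(\|A(t)e_1\|^{-1},\ldots,\|A(t)e_n\|^{-1})$, a continuous matrix function with $\|B(t)e_i\| = 1$ for all $t$ and $i$. Since $\|A(t)\|\le 1$ we have $\Lambda_B := \sup_t\|B(t)\| \le \theta^{-1}\sup_t\|A(t)\| \le \theta^{-1}$, so $(d_0n)/(7\Lambda_B^2) \ge (d_0/7)n\theta^2 \ge m$. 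Apply Theorem \ref{finalresultstatement} to $B$ with the given $\gamma$: there is a continuous family of $m$-dimensional subspaces $\{U(t)\}_{t\in\mathbb{R}}$ with $m_{B(t)|_{U(t)}} \ge \gamma c_0$ for all $t$. (If the $m$ produced by Theorem \ref{finalresultstatement} is strictly larger than the target $m$, just pass to an $m$-dimensional subspace of each $U(t)$; this is legitimate because the orthonormal-basis construction of Lemma \ref{continuous gram-schmidt}, applied to the first $m$ vectors of a continuous orthonormal basis, stays continuous.)

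Now, by Theorem \ref{projection yields basis}, the family $\{U(t)\}$ admits a continuous choice of orthonormal basis $u_1,\ldots,u_m:\mathbb{R}\to\mathbb{R}^n$. Set $R(t) = [u_1(t)\cdots u_m(t)]\in M_{n\times m}$, a continuous matrix function with $\|R(t)\| = 1$ (its columns are orthonormal) and with $m_{R(t)} = 1$. Consider $M(t) := A(t)R(t) = B(t)\,\mathrm{diag}(\ldots)\,R(t)$; more directly, for $v\in\mathbb{R}^m$ with $\|v\|=1$ we have $R(t)v\in U(t)$ with $\|R(t)v\| = 1$, so $\|A(t)R(t)v\| = \|B(t) D(t) R(t) v\|$ — here I should be a little careful, since Theorem \ref{finalresultstatement} gives the lower bound for $B$, not $A$. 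The cleanest route: apply the lower-stretch conclusion to $B$, i.e. $m_{B(t)|_{U(t)}}\ge\gamma c_0$, and note that $A(t) = B(t)D(t)^{-1}$ where $D(t) = \mathrm{diag}(\|A(t)e_i\|^{-1})$ has $\|D(t)^{-1}\|\le 1$ — wait, that goes the wrong way. Instead observe $B(t) = A(t)D(t)$ with $\|D(t)\|\le\theta^{-1}$, hence for $v\in U(t)$, $\gamma c_0\|v\| \le \|B(t)v\| $ does not immediately bound $\|A(t)v\|$ from below. To fix this, I would instead apply Theorem \ref{finalresultstatement} directly to $A$ after first observing we may as well assume $\|A(t)e_i\| = 1$: indeed, running the whole argument with $B$ in place of $A$ yields $L_B, R$ with $L_B(t)B(t)R(t) = I_m$, $\|L_B(t)\|\le(\gamma c_0)^{-1}$, $\|R(t)\| = 1$; then set $L(t) = L_B(t)$ acting on $A(t)D(t)$... the diagonal $D$ is in the way between $A$ and $R$. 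The genuinely correct bookkeeping is: $B(t)R(t) = A(t)D(t)R(t)$, so with $\tilde R(t) := D(t)R(t)$ we get $m_{A(t)|_{\Image \tilde R(t)}}$ controlled only after dividing by $\|D(t)^{-1}\| = \max_i\|A(t)e_i\|\le 1$ — and since $\|A(t)e_i\|\le\|A(t)\|\le 1$, we have $\|D(t)^{-1}\|\le 1$, giving $\|A(t)\tilde R(t) v\| = \|B(t)R(t)v\|\ge \gamma c_0\|v\|$ for unit $v$, i.e. $m_{A(t)\tilde R(t)}\ge\gamma c_0$. Also $\|\tilde R(t)\| \le \|D(t)\|\,\|R(t)\| \le \theta^{-1}$.

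Finally, apply Lemma \ref{left} to the continuous matrix function $t\mapsto A(t)\tilde R(t)\in M_{n\times m}$, whose minimal stretch is $\ge\gamma c_0$ everywhere: this produces a continuous $L:\mathbb{R}\to M_{m\times n}$ with $L(t)\big(A(t)\tilde R(t)\big) = I_m$ and $\|L(t)\|\le(\gamma c_0)^{-1}$. Setting $R := \tilde R$, we get $L(t)A(t)R(t) = I_m$ with
\[
\|L(t)\|\,\|R(t)\| \le \frac{1}{\gamma c_0}\cdot\frac{1}{\theta} = \frac{C}{\theta}, \qquad C = (\gamma c_0)^{-1},
\]
for all $t\in\mathbb{R}$, as desired. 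The main obstacle I anticipate is exactly the diagonal-rescaling bookkeeping above: one must route the column normalization through $R$ (absorbing $D(t)$ into the right factor) rather than through $A$, and check that $\|D(t)\|\le\theta^{-1}$ while $\|D(t)^{-1}\|\le 1$ so that the lower bound for $B$ transfers to $A\tilde R$ without loss and the norm of $R = \tilde R$ picks up only the factor $\theta^{-1}$. Everything else — the reduction in $m$, the passage to a continuous orthonormal basis, the application of Lemma \ref{left} — is routine given the results already established.
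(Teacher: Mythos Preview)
Your proposal is correct and follows essentially the same route as the paper: normalize the columns via a diagonal $D(t)$ to obtain $B = AD$ with unit columns and $\|B\|\le\theta^{-1}$, apply Theorem~\ref{finalresultstatement} to $B$, extract a continuous orthonormal basis $W$ for the resulting subspaces via Theorem~\ref{projection yields basis}, set $R = DW$ so that $AR = BW$ has minimal stretch at least $\gamma c_0$ and $\|R\|\le\theta^{-1}$, and then invoke Lemma~\ref{left} to produce $L$ with $\|L\|\le(\gamma c_0)^{-1}$. Aside from the exploratory detours in your write-up (the ``wait, that goes the wrong way'' passage is unnecessary once you land on $\tilde R = DR$, since $A\tilde R = BR$ directly), the bookkeeping and the final bounds match the paper exactly.
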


\begin{proof}
Let $a_k(t)$ denote the $k$th column of $A(t)$,
\[D(t) = \bigg[\frac{e_1}{\|a_1(t)\|}\cdots \frac{e_n}{\|a_n(t)\|}\bigg]\quad\text{ and }\tilde A(t) = A(t)D(t).\]
Note that
\[\|\tilde A(t)\| \leq \|A(t)\| \max_{1\leq i\leq n}\|a_i(t)\| \leq \frac{1}{\theta}.\]
Also, $A$ satisfies the hypothesis of Theorem \ref{finalresult}, so that we have the existence of a collection of $m$-dimensional subspaces $\{U(t)\}_{t\in \mathbb{R}}$ that vary continuously such that $m_{\tilde{A}(t)|_{U(t)}}\geq \gamma c_0$
for all $t\in\mathbb{R}$, where
\[m\geq \frac{d_0}{7} \frac{n}{\sup_t\|\tilde{A}(t)\|^2} \geq \frac{d_0}{7} n\theta^2.\]

By Theorem \ref{projection yields basis} there exists a continuous choice of orthonormal basis $u_1,\ldots,u_m:\mathbb{R}\to \mathbb{R}^n$. Define $W(t) = [u_1(t)\cdots u_m(t)]$, which is a continuous matrix function with the property $\Image(W(t)) = U(t)$ and for all $x\in\mathbb{R}^m$, $\|W(t)x\| = \|x\|$. Let $R(t) = D(t)W(t)$, which is continuous and satisfies $\|R(t)\| = \|D(t)\| \leq 1/\theta$.

Now since 
\begin{align*}
m_{A(t)R(t)} &= \inf\Big\{\|\tilde A(t)W(t)x\|: x\in\mathbb{R}^m,\|x\| = 1\Big\}\\
&= \inf\Big\{\|\tilde A(t)x\|:x\in U(t),\|x\| = 1\Big\} \geq \gamma c_0
\end{align*}
we can apply Lemma \ref{left} to $A(t)R(t)$ to show the existence of a continuous left inverse $L(t)$ of $A(t)R(t)$ that satisfies $\|L(t)\| \leq (\gamma c_0)^{-1}$ for all $t\in\mathbb{R}$.
\end{proof}

\section{Method II: Column Space Approach} \label{BT-proof}
In this section, we prove Theorem \ref{finalresultBTstatement}. The goal is to attain a continuous choice of subspaces on which a given matrix function satisfies the desired minimal stretch property that more closely resembles spaces spanned by a subset of the unit vector basis. These will be quadratic convex combinations of disjoint basis vectors (see Definition \ref{convex combos of spaces}). Therefore, we require a statement that guarantees the existence of suitable pairs of subspaces spanned by disjoint basis vectors that behave sufficiently well with one another so that they can be ``stitched'' together.

\subsection{A Bourgain-Tzafriri Theorem for disjoint subsets of the basis}
We first prove the static result that is necessary in the proof of Theorem \ref{finalresultBTstatement}.

\begin{thm}\label{BTbigthm}
There exist constants $1>d_1 > d_2 > d_3 > 0$ such that the following holds. For every $n\times n$ matrix $A$ with $\|A e_{i}\|=1$ for $i=1,\ldots,n$, and for every $\sigma_1\subset\{1,\ldots,n\}$ with $|\sigma_1|\leq d_1n\|A\|^{-2}$, there exists
\[\sigma_2\subset\{1,\ldots,n\}\setminus\sigma_1\text{ with }|\sigma_2|\geq d_2n\|A\|^{-2}\]
such that for any choice of scalars $\{a_{j}\}_{j \in {\sigma_1 \cup \sigma_{2}}}$, we have 
\[\Big\|\sum_{j \in \sigma_1 \cup \sigma_{2}} a_j Ae_j \Big\|\geq \frac{1}{16\sqrt{2}} \Big(\sum_{j \in \sigma_{2}} |a_j|^2\Big)^{1/2}.\]
If we additionally assume that $|\sigma_1| \geq d_2n\|A\|^{-2}$ and $d_1d_2n\|A\|^{-4}\geq 1$  then there also exist
\[\tau_1 \subset \sigma_1\text{ and }\stilde \sigma_{2} \subset \sigma_{2}\text{ with }|\tau_1|  \ge d_3 n\|A\|^{-4}\text{ and }|\stilde \sigma_{2}| \geq d_3 n\|A\|^{-4}\]
such that for any choice of scalars $\{a_{j}\}_{j \in \tau_1 \cup \stilde \sigma_{2}}$, we have 
\[\Big\|\sum_{j \in \tau_1 \cup \stilde \sigma_{2}} a_j Ae_j\Big\|\geq \frac{1}{32} \Big(\sum_{j \in \tau_1 \cup \stilde \sigma_{2}} |a_j|^2\Big)^{1/2}.\]
\end{thm}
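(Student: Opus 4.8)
The plan is to extract Theorem~\ref{BTbigthm} from the original Bourgain--Tzafriri theorem (Theorem~\ref{OGBT}) by a restriction-and-refinement scheme. The first claim --- given a small ``forbidden'' set $\sigma_1$, producing $\sigma_2$ disjoint from it that behaves well together with $\sigma_1$ --- I would obtain by applying Theorem~\ref{OGBT} not to $A$ itself but to the submatrix $A'$ obtained by deleting the columns indexed by $\sigma_1$. Since $|\sigma_1|$ is small ($\le d_1 n\|A\|^{-2}$), the matrix $A'$ still has $n' = n - |\sigma_1| \ge (1 - d_1)n$ columns, each of unit length, and $\|A'\| \le \|A\|$, so Theorem~\ref{OGBT} yields $\sigma_2 \subset \{1,\dots,n\}\setminus\sigma_1$ with $|\sigma_2| \ge d_0 n' \|A\|^{-2} \ge d_0(1-d_1) n\|A\|^{-2}$, which gives the required lower bound with $d_2 = d_0(1-d_1)$ provided $d_1$ is chosen small enough that $d_0(1-d_1) < d_1$. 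The lower $\ell_2$ bound from Theorem~\ref{OGBT} controls $\|\sum_{j\in\sigma_2} a_j A e_j\|$ directly; to upgrade this to a bound on $\|\sum_{j\in\sigma_1\cup\sigma_2} a_j Ae_j\|$ in terms of only $(\sum_{j\in\sigma_2}|a_j|^2)^{1/2}$, I would use a projection trick: the point is that we are allowed to throw away the $\sigma_1$-part of the sum at the cost of a constant, because what we really need is a lower bound, and one can project $\sum_{j\in\sigma_1\cup\sigma_2} a_j Ae_j$ onto a subspace on which the $\sigma_1$-columns have small influence. Concretely, I expect the clean way is to additionally shrink $\sigma_2$ (or rather, to select $\sigma_2$ inside the range where $A$ restricted to $U_{\sigma_1 \cup \sigma_2}$ is well-conditioned in a one-sided sense) so that $\mathrm{dist}(A e_j, \langle Ae_i : i \in \sigma_1\rangle)$ stays bounded below for $j \in \sigma_2$; a standard averaging/pigeonhole argument over a random further subset of $\sigma_2$ (à la the probabilistic core of Bourgain--Tzafriri) shows one can keep a positive proportion of $\sigma_2$ with this property, and the constant $1/(16\sqrt2)$ is the bookkeeping output of that argument.

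For the second, bilateral claim, the idea is to symmetrize: we now have $\sigma_1$ and $\sigma_2$ disjoint, each of size comparable to $n\|A\|^{-2}$, and $A$ restricted to $U_{\sigma_1\cup\sigma_2}$ satisfying a one-sided lower bound that controls the $\sigma_2$-coordinates. I would apply the \emph{first} part again, but now with the roles reversed and with the ambient matrix being $A$ restricted to the columns $\sigma_1\cup\sigma_2$ --- i.e., treat $\sigma_2$ as the ``forbidden'' set inside $\sigma_1\cup\sigma_2$ --- to extract $\tau_1 \subset \sigma_1$ such that $A$ on $U_{\tau_1\cup\sigma_2}$ controls the $\tau_1$-coordinates. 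Combining the two one-sided estimates (one controlling $\sigma_2$-coordinates on $U_{\sigma_1\cup\sigma_2}$, hence on $U_{\tau_1\cup\sigma_2}$, the other controlling $\tau_1$-coordinates on $U_{\tau_1\cup\sigma_2}$) gives a genuine two-sided lower bound on $U_{\tau_1\cup\sigma_2}$: for $v = \sum_{j\in\tau_1\cup\tilde\sigma_2} a_j Ae_j$ one gets $\|v\| \gtrsim (\sum_{j\in\tau_1}|a_j|^2)^{1/2}$ and $\|v\| \gtrsim (\sum_{j\in\tilde\sigma_2}|a_j|^2)^{1/2}$ separately, so $\|v\| \gtrsim \frac{1}{\sqrt2}(\sum_{j\in\tau_1\cup\tilde\sigma_2}|a_j|^2)^{1/2}$, which with the $1/16$-type constants from the two applications yields $1/32$. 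The dimension drops to $n\|A\|^{-4}$ because each application costs a factor $\|A\|^{-2}$: the first gave sets of size $\sim n\|A\|^{-2}$, and re-applying inside a matrix whose column count is itself only $\sim n\|A\|^{-2}$ produces subsets of size $\sim (n\|A\|^{-2})\cdot\|A\|^{-2} = n\|A\|^{-4}$. The hypotheses $|\sigma_1| \ge d_2 n\|A\|^{-2}$ and $d_1 d_2 n\|A\|^{-4} \ge 1$ are exactly what is needed for this second application to be non-vacuous (the ``forbidden set inside $\sigma_1\cup\sigma_2$'' must be small relative to $|\sigma_1\cup\sigma_2|$, and the resulting size estimate must be at least $1$).

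The main obstacle I anticipate is the passage from the lower bound on $\|\sum_{j\in\sigma_2}a_jAe_j\|$ (with $a_j = 0$ off $\sigma_2$) to the lower bound on $\|\sum_{j\in\sigma_1\cup\sigma_2}a_jAe_j\|$ with the right-hand side still only involving the $\sigma_2$-coefficients --- i.e., making precise the claim that the $\sigma_1$-columns ``do not destroy'' the lower bound. Naively $\|\sum_{\sigma_1\cup\sigma_2} a_j Ae_j\|$ could be small even when $\sum_{\sigma_2}|a_j|^2$ is large, if the $\sigma_1$-part cancels the $\sigma_2$-part. The fix must be a \emph{selection} statement: one cannot take an arbitrary $\sigma_2$ coming out of Theorem~\ref{OGBT}; one must refine it so that every $Ae_j$, $j\in\sigma_2$, stays uniformly far from $\mathrm{span}(Ae_i : i\in\sigma_1)$ (equivalently, $\|(I - Q)Ae_j\| \ge \delta$ where $Q$ is the orthogonal projection onto $\mathrm{span}(Ae_i:i\in\sigma_1)$), and then additionally that $A$ restricted to those ``far'' columns is well-conditioned. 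The first of these is easy by Hilbert--Schmidt counting (since $\mathrm{rank}\,Q = |\sigma_1|$ is small, at most a bounded fraction of unit vectors $Ae_j$ can be close to $\mathrm{Im}\,Q$), and the second is again Bourgain--Tzafriri applied to the compressed matrix $(I-Q)A$ restricted to those columns. So the real content is to interleave a Hilbert--Schmidt averaging step with an application of Theorem~\ref{OGBT}, tracking constants carefully; everything else is bookkeeping of the $d_i$'s and of the compounding $\|A\|^{-2}$ factors.
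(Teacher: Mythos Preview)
Your first-part strategy is correct and is a genuinely different route from the paper's. The paper does \emph{not} use Theorem~\ref{OGBT} as a black box: instead it re-runs the entire Bourgain--Tzafriri machinery (random selection with Bernstein's inequality, Sauer--Shelah, then Khintchine plus an exhaustion argument) with the forbidden set $\sigma_1$ hard-wired into the probabilistic step, proving the one-sided bound directly. Your alternative---project onto the orthocomplement of $\mathrm{span}(Ae_i:i\in\sigma_1)$, discard the few $j$ with $\|QAe_j\|^2>1/2$ via the Hilbert--Schmidt estimate $\|QA\|_{\mathrm{HS}}^2\le|\sigma_1|\,\|A\|^2\le d_1n$, and apply Theorem~\ref{OGBT} to the (normalized) compressed columns---is cleaner and more modular; the key observation that $(I-Q)$ annihilates the $\sigma_1$-part, so that $\|\sum_{\sigma_1\cup\sigma_2}a_jAe_j\|\ge\|(I-Q)\sum_{\sigma_2}a_jAe_j\|$, is exactly the right way to absorb the $\sigma_1$-coefficients. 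The price is that you will get constant $c_0/\sqrt2$ rather than the paper's explicit $1/(16\sqrt2)$, but that is immaterial for the application.

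There is, however, a real gap in your bilateral step. You propose to re-apply the first part inside $\sigma_1\cup\sigma_2$ with $\sigma_2$ as the forbidden set. But the first part requires the forbidden set to have size at most $d_1\cdot(\text{ambient column count})/\|A'\|^2$; here the ambient count is $|\sigma_1\cup\sigma_2|\asymp n\|A\|^{-2}$ and $\|A'\|\le\|A\|$, so the allowed forbidden-set size is only $\asymp n\|A\|^{-4}$, while $|\sigma_2|\asymp n\|A\|^{-2}$. Since $\|A\|\ge1$, the condition fails in general, and the additional hypotheses $|\sigma_1|\ge d_2n\|A\|^{-2}$ and $d_1d_2n\|A\|^{-4}\ge1$ do \emph{not} rescue it (they guarantee the \emph{output} size is at least $1$, not that the input constraint is met). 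The fix---which the paper carries out explicitly---is to first pass to a subset $\tilde\sigma_2\subset\sigma_2$ with $|\tilde\sigma_2|\le d_1(|\sigma_1|+|\tilde\sigma_2|)/\|A\|^2$, i.e.\ $|\tilde\sigma_2|\asymp n\|A\|^{-4}$, and only then run the reversed application on $\sigma_1\cup\tilde\sigma_2$. This is where the $\|A\|^{-4}$ in $|\tilde\sigma_2|$ really comes from; your accounting attributes it to the second application's output, but it is actually forced at the input. Once you insert this shrinking step, your combination argument via $\max\{x,y\}\ge(x^2+y^2)^{1/2}/\sqrt2$ is exactly what the paper does, and everything goes through.
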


In order to make the theorem more tractable, the proof is broken down into lemmata. The argument below follows the general shape of the argument first made in \cite{bourgain:tzafriri:1987}. In Lemmata \ref{BTstep1}, \ref{BTstep2}, and \ref{BTstep3}  below it is assumed that we are given an  $n\times n$ matrix $A$ that satisfies the assumptions of Theorem \ref{BTbigthm} (i.e., $\left\|A e_{i}\right\| = 1$ for $i=1,\ldots,n$).

\begin{rem}
\label{actual constants}
The constants in the Theorem \ref{BTbigthm} are $d_1 = 1/320$ (proof of Lemma \ref{BTstep1}), $d_2 = d_1/4$ (Lemma \ref{BTstep2}), and $d_3 = d_2^2/2$ (proof of Lemma \ref{BTstep3}).
\end{rem}

\begin{lem} \label{BTstep1}
There exists a constant $d_1>0$ such that the following holds: For every $\sigma_1\subset \{1,\ldots,n\}$ with $|\sigma_1| \leq d_1n\|A\|^{-2}$ there exists $\sigma_2\subset \{1,\ldots,n\}$ with $\sigma_2\geq d_1 n\|A\|^{-2}$ such that for every $i \in \sigma_2$,
\[\big\|P_{\langle Ae_j : j \in (\sigma_1 \cup \sigma_2) \setminus \{i\} \rangle} Ae_i\big\| < \frac{1}{\sqrt2}.\]
\end{lem}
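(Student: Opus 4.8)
The plan is to follow the original Bourgain--Tzafriri selection argument, where one builds $\sigma_2$ greedily (or via an averaging/probabilistic selection) so that the columns indexed by $\sigma_1\cup\sigma_2$ stay ``nearly orthogonal'' in a controlled sense. First I would set $P_i = P_{\langle Ae_j : j\in(\sigma_1\cup\sigma_2)\setminus\{i\}\rangle}$ and note that $\|P_i Ae_i\|<1/\sqrt2$ is equivalent to saying that $Ae_i$ has a definite ``angle'' away from the span of the other selected columns. The standard way to force this is to control the Gram matrix: if $G$ denotes the Gram matrix of $\{Ae_j : j\in\sigma_1\cup\sigma_2\}$, then $G = I + E$ where $E$ has zero diagonal, and a bound like $\|E\|_{\mathrm{op}} \le 1/2$ (say) forces, for each $i$, $\|P_i Ae_i\|^2 \le$ some explicit function of $\|E\|$ strictly below $1/2$, hence $\|P_iAe_i\|<1/\sqrt2$. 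So the goal reduces to: given the fixed ``bad'' set $\sigma_1$ of size at most $d_1 n\|A\|^{-2}$, find $\sigma_2$ of size at least $d_1 n\|A\|^{-2}$ so that the off-diagonal block of the Gram matrix of $\sigma_1\cup\sigma_2$ has small operator norm.

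The key steps, in order: (1) reformulate the conclusion in terms of $\|P_iAe_i\|$ via the block decomposition $G=I+E$ and a Neumann-series / Schur-complement estimate showing $\|P_iAe_i\|^2\le \|E\|/(1-\|E\|)$ or a similar explicit bound, so that $\|E\|\le 1/3$ suffices. (2) Use the hypothesis $\|Ae_i\|=1$ together with $\sum_i \|Ae_i\|^2 = n$ and $\|A\|\le\Lambda$ to get the mean-square smallness of inner products: on average over $i$, $\sum_{j\ne i}|\langle Ae_i, Ae_j\rangle|^2 = \|A^TA\|_{\mathrm{HS}}^2 - n \le (\mathrm{rank})\|A\|^4 - n$ is $O(n\Lambda^2)$, which is the engine that lets a random subset of density $\sim \Lambda^{-2}$ have a small Gram perturbation. (3) Run the probabilistic selection: include each index in a candidate set independently with probability $p\sim c\Lambda^{-2}$, throw away $\sigma_1$, and show via a first/second-moment or a restricted-invertibility-style expectation bound (controlling $\mathbb{E}\|E\|$ through $\mathbb{E}\|E\|_{\mathrm{HS}}^2$ or through an iterative decoupling as in Bourgain--Tzafriri) that with positive probability the resulting $\sigma_2$ both has size $\ge d_1 n\Lambda^{-2}$ and yields $\|E\|\le 1/3$ for the combined index set $\sigma_1\cup\sigma_2$. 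The presence of the extra fixed columns $\sigma_1$ only adds a rank-$|\sigma_1|$ perturbation, and since $|\sigma_1|\le d_1 n\Lambda^{-2}$ is of the same small order, this is absorbed by taking $d_1$ small enough. (4) Finally extract the explicit value $d_1 = 1/320$ by tracking constants through these estimates, matching Remark \ref{actual constants}.

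The main obstacle I expect is step (3): obtaining a genuine \emph{operator-norm} bound on the off-diagonal Gram perturbation $E$ (not just a Hilbert--Schmidt bound), uniformly over the adversarially chosen $\sigma_1$, while keeping the density $p$ of order exactly $\Lambda^{-2}$ so that $|\sigma_2|$ meets the claimed lower bound. This is precisely the delicate part of the original Bourgain--Tzafriri proof, typically handled by an iteration that repeatedly halves a candidate set while controlling a norm quantity, or by Vershynin-style/Spielman--Srivastava-style arguments; the added twist here is that the forbidden set $\sigma_1$ must be excised and yet its columns still constrained, so I would incorporate $\sigma_1$'s columns into the ``target'' span from the outset and verify that the iteration's norm bound degrades only by a constant factor because $|\sigma_1|$ is comparably small. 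The rest of the argument (the Gram-to-projection reduction in step (1) and the moment computation in step (2)) is routine linear algebra once that core selection is in place.
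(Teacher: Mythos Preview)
Your route through the Gram matrix operator norm is more ambitious than what the lemma requires, and the particular mechanism you propose in step~(3) has a gap. Controlling $\mathbb{E}\|E\|_{\mathrm{op}}$ through $\mathbb{E}\|E\|_{\mathrm{HS}}^2$ does not work at this density: with $p\sim \Lambda^{-2}$ and $\sum_{i\neq j}|\langle Ae_i,Ae_j\rangle|^2 \le \|A^TA\|_{\mathrm{HS}}^2 \le n\Lambda^2$, the expected value of $\|E\|_{\mathrm{HS}}^2$ on the random subset is of order $p^2 n\Lambda^2 \sim n/\Lambda^2$, i.e.\ of order $|\sigma_1\cup\sigma_2|$, which gives no useful bound on $\|E\|_{\mathrm{op}}$. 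Your fallback, the full Bourgain--Tzafriri iterative decoupling, would eventually produce an operator-norm bound, but that is essentially proving the entire restricted invertibility theorem to obtain this preliminary lemma.

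The paper avoids the operator norm of $E$ altogether. It runs the random selection (each $i\in\sigma_1^c$ kept with probability $\delta = 1/(8\|A\|^2)$) but bounds directly the \emph{first moment of the sum of squared projection norms}:
\[
\mathbb{E}\Big[\sum_{i\in\sigma_1^c}\xi_i\,\big\|P_{\langle Ae_j:\,j\in(\sigma(\omega)\cup\sigma_1)\setminus\{i\}\rangle}Ae_i\big\|^2\Big]
\le \delta\,\mathbb{E}\big[\|P_{W(\omega)}A\|_{\mathrm{HS}}^2\big]
\le \delta\|A\|^2\big(|\sigma_1|+\delta|\sigma_1^c|\big),
\]
using independence of $\xi_i$ from the projection onto the remaining columns, and then the elementary inequality $\|P_W A\|_{\mathrm{HS}}^2\le \|A\|^2\mathrm{rank}(W)$. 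Bernstein's inequality pins down $|\sigma(\omega)|$, one picks a good $\omega_0$, and a final Markov-type count extracts $\sigma_2\subset\sigma(\omega_0)$ consisting of those $i$ with $\|P_iAe_i\|<2\|A\|\sqrt{\delta}=1/\sqrt2$. No Gram-matrix inversion, no Schur complement, no operator-norm control is needed; the lemma only asserts the projection bound for $i\in\sigma_2$, not for $i\in\sigma_1$, and the paper's argument is tailored to exactly that asymmetry. Your step~(1) reduction, while correct, throws this asymmetry away and thereby forces you into the harder problem.
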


\begin{proof}
Take $\delta=1/(8\|A\|^2)$ and $d_1 = 1/(8\cdot40)$. Let $\left\{\xi_{i}\right\}_{i \in \sigma_1^c}$ be a sequence of independent random variables of mean $\delta$ over a probability space $(\Omega, \Sigma, \mu)$ taking only the values 0 and 1.  Define
\[D = \Big\{\omega\in\Omega: \Big|\sum_{i\in\sigma_1^c}\xi_i(\omega) - \delta|\sigma_1^c|\Big|\geq \delta|\sigma_1^c|/2\Big\}.\]
By Bernstein's inequality (see, e.g., \cite[Lemma 1.3]{bourgain:tzafriri:1987} or \cite{bennet:1965}),
\begin{equation}
\label{Bernstein estimate}
\mu(D)\leq 2e^{-\delta|\sigma_1^c|/10}.
\end{equation}

For each $\omega \in \Omega$, let
$$
\sigma(\omega)=\left\{j \in \sigma_1^c : \xi_{j}(\omega)=1\right\} .
$$
We will show that there must be at least one $\sigma(\omega)$ which can be modified slightly to have the desired property. Set $V = \langle Ae_i : i \in \sigma_1 \rangle$. Then

\begin{align*}
    &\int_{\Omega} \sum_{i \in \sigma_1^c} \xi_{i}(\omega) \left\| P_{\langle \xi_{j} (\omega) Ae_j \cup V \rangle_{j \neq i, j \in \sigma_1^c}} (Ae_i) \right\|^2 d \mu \\
    &= \sum_{i \in \sigma_1^c} \bigg( \int_{\Omega} \xi_i(\omega) d\mu\bigg) \bigg( \int_{\Omega}\left\|P_{\langle \xi_j(\omega) Ae_j \cup V \rangle_{j \neq i, j \in \sigma_1^c}}(Ae_i)\right\|^2 d\mu\bigg)\\
    &=   \delta \int_{\Omega} \sum_{i \in \sigma_1^c} \left\| P_{\langle \xi_{j} (\omega) Ae_j \cup V \rangle_{j \neq i, j \in \sigma_1^c}} (Ae_i) \right\|^2 d \mu \\
    &\leq \delta \int_{\Omega} \sum_{ i \in \sigma_1^c}\left\|P_{\langle \xi_j(\omega) Ae_j \cup V \rangle_{j \in \sigma_1^c}} (Ae_i) \right\|^2 d\mu
\end{align*}
where the first equality used the fact that $ \xi_{i}(\omega)$ and $  \left\| P_{\langle \xi_{j} (\omega) Ae_j \cup V \rangle_{j \neq i, j \in \sigma_1^c}} (Ae_i) \right\|^2$ are independent (since the latter is a function of $(\xi_1,\ldots \xi_{i-1},\xi_{i+1},\ldots  \xi_n)$ so that we may split up the integral into a product) and the last line used the monotonicity of projection norms. By letting $W(\omega) = \{\xi_j (\omega) Ae_j : j \in \sigma_1^c\} \cup V,$ we have
\begin{align*}
&\int_{\Omega} \sum_{i \in \sigma_1^c} \xi_{i}(\omega) \left\| P_{\langle \xi_{j} (\omega) Ae_j \cup V \rangle_{j \neq i, j \in \sigma_1^c}} (Ae_i) \right\|^2 d \mu \\
  & \leq \delta \int_{\Omega} \sum_{ i \in \sigma_1^c}\left\|P_{\langle W(\omega) \rangle} (Ae_i) \right\|^2 d\mu \\
& \leq \delta \int_{\Omega} \sum_{i=1}^n \left\| P_{\langle W(\omega) \rangle} (Ae_i)\right\|^2 d \mu \\
&=\delta \int_{\Omega}\left\|P_{\langle W(\omega) \rangle} A\right\|_\mathrm{H S}^{2} d \mu \nonumber \\
& \leq \delta\|A\|^{2} \int_{\Omega} \left(|\sigma_1|+ \sum_{j \in \sigma_1^c} \xi_{j}(\omega)  \right) \ d \mu \\
&=\delta \|A\|^{2} \left(|\sigma_1|+ \delta |\sigma_1^c|  \right)
\end{align*}
where we used the standard inequality $\|A\|^2_\mathrm{H S}\leq \|A\|^2 \mathrm{rank}(A)$ and then made use of the fact that $\mathrm{rank}(P_{\langle W(\omega) \rangle} A)$ is bounded above by the number of non-zero vectors in $\{\xi_i(\omega): i\in \sigma_1^c\}\cup \{Ae_i: i\in \sigma_1\}$ which has the crude bound $|\sigma_1|+\sum_{j\in \sigma_1^c} \xi_j(\omega)$.

Since all functions involved are non-negative this yields:
\begin{align*}
&\int_{\Omega \setminus D} \, \sum_{i \in \sigma_1^c} \xi_{i}(\omega)\left\|P_{\langle \xi_{j}(\omega) Ae_{j} \cup V \rangle_{j \neq i, j \in \sigma_1^c}}\left(Ae_{i}\right)\right\|^{2} d \mu \\
&\leq \int_{\Omega} \, \sum_{i \in \sigma_1^c} \xi_{i}(\omega)\left\|P_{\langle \xi_{j}(\omega) Ae_{j} \cup V \rangle_{j \neq i, j \in \sigma_1^c}}\left(Ae_{i}\right)\right\|^{2} d \mu \\
&\leq \delta \|A\|^{2} \left(|\sigma_1|+ \delta |\sigma_1^c|  \right)
\end{align*}
which, with  \eqref{Bernstein estimate}, implies that there exists a point $\omega_{0} \in \Omega \setminus D$ 
such that 
\begin{align*}
    \displaystyle \sum_{i \in \sigma(\omega_0)} \left\|P_{\langle Ae_j  \cup V \rangle_{j \in \sigma(\omega_0) \setminus \{i\}}} (Ae_i) \right\|^2 
    &=\sum_{i \in \sigma_1^c} \xi_{i}(\omega_0)\left\|P_{\langle \xi_{j}(\omega_0) Ae_{j} \cup V \rangle_{j \neq i, j \in \sigma_1^c}}\left(Ae_{i}\right)\right\|^{2}\\
    &\le \frac{\delta \|A\|^2 (|\sigma_1|+ \delta |\sigma_1^c| )}{1 - \mu(D)}\\  
    &\leq \frac{\delta \|A\|^2 (|\sigma_1|+ \delta |\sigma_1^c| )}{1-2 \exp \bigg(\tfrac{-\delta|\sigma_1^c|}{10}\bigg)}.
\end{align*}
The exponential factor in the denominator can be dealt with by working with working with two separate cases: 

\noindent\textbf{Case 1: $\delta n< 40$}. In this case since $d_1 = 1/(8\cdot40)$ we have that

\begin{align*}
d_1 \frac{n}{\|A\|^2}&= \frac{n \delta}{40} < 1
\end{align*}
so that $|\sigma_1|=0,$ and for any $\sigma_2 \subset \{1,\dots n\}$,  with $|\sigma_2|=1$ the statement is vacuously true.

\noindent\textbf{Case 2:} $\delta n \geq 40$. Note that the assumption made on the columns of $A$ imply that $\|A\|^2 \geq 1 $.  So since $d_1 < 1/2$ and $|\sigma_1|\leq d_1 n\|A\|^{-2}$ we must have that $|\sigma_1^c|\geq n/2$. Some simple calculus shows that $1-2e^{-x}\geq 2/3$ for all $x\geq 2$ and thus \begin{align*}
  \displaystyle \sum_{i \in \sigma(\omega_0)} \left\|P_{\langle Ae_j  \cup V \rangle_{j \in \sigma(\omega_0) \setminus \{i\}}} (Ae_i) \right\|^2 \leq \frac{\delta \|A\|^2 (|\sigma_1|+ \delta |\sigma_1^c| )}{1-2 \exp \bigg(\tfrac{-\delta|\sigma_1^c|}{10}\bigg)} \leq \frac{3}{2}\delta \|A\|^2 (|\sigma_1|+ \delta |\sigma_1^c| ).
\end{align*}  By definition of $D$, we also have:
\begin{align*}
   \left|\sigma(\omega_{0})\right|=\sum_{i \in \sigma_1^c} \xi_{i}\left(\omega_{0}\right) \geq \delta |\sigma_1^c|/ 2. 
\end{align*}
Let
 $$\sigma_2 = \bigg\{ i \in \sigma(\omega_0) : \left\|P_{\langle Ae_j  \cup V \rangle_{j \in \sigma(\omega_0) \setminus \{i\}}} (Ae_i) \right\| < 2 \|A\| \sqrt{\delta} \bigg\}.$$
Note that, given our choice of $\delta,$ $\sigma_2$ satisfies the desired conclusion. We just need to provide a lower bound on its size. Now,
\begin{gather*}
%\sum_{i \in \sigma_2} \left\|P_{\langle Ae_j  \cup V \rangle_{j \in \sigma(\omega_0) \setminus \{i\}}} (Ae_i) \right\|^2 +
\sum_{i \in \sigma(\omega_0) \setminus \sigma_2} \left\|P_{\langle Ae_j  \cup V \rangle_{j \in \sigma(\omega_0) \setminus \{i\}}} (Ae_i) \right\|^2 \leq    \sum_{i \in \sigma(\omega_0)} \left\|P_{\langle Ae_j  \cup V \rangle_{j \in \sigma(\omega_0) \setminus \{i\}}} (Ae_i) \right\|^2\\
\leq \frac{3}{2}\delta \|A\|^{2} \left(|\sigma_1|+ \delta |\sigma_1^c|  \right).
\end{gather*}
so, using the definition of $\sigma_2$, 
\[
4\|A\|^2 \delta | \sigma(\omega_0) \setminus \sigma_2| \leq \frac{3}{2}\delta \|A\|^2(|\sigma_1|+ \delta |\sigma_1^c| ).
\]
Since $|\sigma_1| \leq d_1 n\|A\|^{-2}$ and $|\sigma(\omega_{0})| \geq \delta |\sigma_1^c|/ 2 $, we have
\begin{align*}
    4\|A\|^2 \delta | \sigma(\omega_0) \setminus \sigma_2| &\leq \frac{3}{2}\delta \|A\|^2\left(d_1\frac{n}{\|A\|^2} + 2|\sigma(\omega_0)|\right).
\end{align*}
Solving this inequality for $|\sigma_2|$ yields
\begin{align*}
|\sigma_2| &\geq \frac{1}{4}|\sigma(\omega_0)| - \frac{3}{8} d_1\frac{n}{\|A\|^2} \\
&\geq \delta\frac{|\sigma_1^c|}{8} - d_1\frac{n}{\|A\|^2}\\
&= \frac{|\sigma_1^c|}{64\|A\|^2}- \frac{n}{320\|A\|^2} \\
&\geq \frac{n}{128\|A\|^2}- \frac{n}{320\|A\|^2} \geq \frac{n}{320\|A\|^2} = d_1\frac{n}{\|A\|^2}.\qedhere
\end{align*}
\end{proof}

\begin{lem}\label{BTstep2}
For every $\sigma_1\subset \{1,\ldots,n\}$ with $|\sigma_1| \leq d_1n\|A\|^{-2}$ there exists $\sigma_2\subset \{1,\ldots,n\}\setminus\sigma_1$ with $\sigma_2\geq (d_1/2) n\|A\|^{-2}$ such that for every $\{a_j\}_{j \in \sigma_1 \cup \sigma_{2}}$:
\[\norm{\sum_{j \in \sigma_1 \cup \sigma_{2}} a_jAe_j}\geq \frac{1}{4\sqrt{2|\sigma_{2}}|} \sum_{j \in \sigma_{2}} |a_j|.\]
\end{lem}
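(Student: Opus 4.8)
The plan is to derive the bound from Lemma~\ref{BTstep1} after reducing to the ``quotient'' vectors obtained by projecting away the columns indexed by $\sigma_1$. Write $V=\langle Ae_j:j\in\sigma_1\rangle$, let $Q$ be the orthogonal projection onto $V^\perp$, and set $\tilde u_i=Q(Ae_i)$ for $i\notin\sigma_1$. First I would apply Lemma~\ref{BTstep1} to the given $\sigma_1$ to get $\tau\subset\{1,\ldots,n\}$ with $|\tau|\geq d_1 n\|A\|^{-2}$ and $\|P_{\langle Ae_j:j\in(\sigma_1\cup\tau)\setminus\{i\}\rangle}Ae_i\|<1/\sqrt2$ for every $i\in\tau$. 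Since $\{Ae_j:j\in\sigma_1\}$ spans $V$, for any $\sigma_2\subseteq\tau$ and any scalars $\{a_j\}_{j\in\sigma_1\cup\sigma_2}$ we have $\|\sum_{j\in\sigma_1\cup\sigma_2}a_jAe_j\|\geq\|Q(\sum_{j\in\sigma_1\cup\sigma_2}a_jAe_j)\|=\|\sum_{i\in\sigma_2}a_i\tilde u_i\|$, so it suffices to bound $\|\sum_{i\in\sigma_2}a_i\tilde u_i\|$ below for a suitable $\sigma_2$. Note also that $\mathrm{dist}(\tilde u_i,\langle\tilde u_k:k\in\tau\setminus\{i\}\rangle)=\mathrm{dist}(Ae_i,\langle Ae_j:j\in(\sigma_1\cup\tau)\setminus\{i\}\rangle)>1/\sqrt2$ for $i\in\tau$, a property inherited by subsets; in particular the $\tilde u_i$, $i\in\tau$, are linearly independent.

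Next I would work with the Gram matrix $G=(\langle\tilde u_i,\tilde u_k\rangle)_{i,k\in\tau}$, which is positive definite with $\|G\|\leq\|A\|^2$. The identity $\mathrm{dist}(\tilde u_i,\langle\tilde u_k:k\neq i\rangle)^2=1/(G^{-1})_{ii}$ gives $(G^{-1})_{ii}<2$ for all $i\in\tau$, hence $\mathrm{tr}(G^{-1})<2|\tau|$. Fix a small universal constant $t>0$ and let $W$ be the span of the eigenvectors of $G$ with eigenvalue $\leq t$. Expanding $G^{-1}=\sum_j\mu_j^{-1}w_jw_j^T$ in an orthonormal eigenbasis (with $\mu_j$ the eigenvalues of $G$) and discarding the terms with $\mu_j>t$ yields $(G^{-1})_{ii}\geq t^{-1}\sum_{\mu_j\leq t}(w_j)_i^2=t^{-1}(P_W)_{ii}$, so $(P_W)_{ii}\leq t(G^{-1})_{ii}<2t$ for every $i\in\tau$, and in particular $\dim W=\mathrm{tr}(P_W)<2t|\tau|$. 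Thus $W$ is a low-dimensional subspace all of whose diagonal entries (in the $\tau$-coordinates) are small.

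The crux is then to choose $\sigma_2\subseteq\tau$ with $|\sigma_2|\geq|\tau|/2\geq(d_1/2)n\|A\|^{-2}$ such that the coordinate subspace $\mathbb R^{\sigma_2}$ is quantitatively separated from $W$, say $\|P_WP_{\sigma_2}\|\leq\rho<1$ for a universal $\rho$. This is precisely a paving/restricted-invertibility statement for the projection $P_W$, whose small diagonal and small rank relative to $|\tau|$ make such a $\sigma_2$ available; it is here that a column-selection argument in the spirit of \cite{bourgain:tzafriri:1987} is carried out, and where the factor-$2$ loss in cardinality is incurred. Granting this, for $a$ supported on $\sigma_2$ we use that $G$ leaves $W$ and $W^\perp$ invariant and that $G|_{W^\perp}\succeq tI$:
\[
\Big\|\sum_{i\in\sigma_2}a_i\tilde u_i\Big\|^2=a^TGa=(P_Wa)^TG(P_Wa)+(P_{W^\perp}a)^TG(P_{W^\perp}a)\geq t\|P_{W^\perp}a\|^2\geq t(1-\rho^2)\|a\|^2 .
\]
Calibrating $t,\rho$ so that $t(1-\rho^2)\geq 1/32$ gives $\|\sum_{i\in\sigma_2}a_i\tilde u_i\|\geq\tfrac{1}{4\sqrt2}\,\|a|_{\sigma_2}\|_2$, and then Cauchy--Schwarz ($\|a|_{\sigma_2}\|_2\geq|\sigma_2|^{-1/2}\|a|_{\sigma_2}\|_1$) yields
\[
\Big\|\sum_{j\in\sigma_1\cup\sigma_2}a_jAe_j\Big\|\geq\Big\|\sum_{i\in\sigma_2}a_i\tilde u_i\Big\|\geq\frac{1}{4\sqrt{2|\sigma_2|}}\sum_{j\in\sigma_2}|a_j| ,
\]
as required. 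I expect the main obstacle to be exactly the selection step: producing a coordinate subset of $\tau$ of size at least $|\tau|/2$ on which the projection onto the near-degenerate subspace $W$ has norm bounded away from $1$ by a universal amount (without losing more than a factor $2$ in the size), since the naive probabilistic partition introduces a $\log|\tau|$ factor that would spoil the universal constant.
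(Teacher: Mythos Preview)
Your reduction to the projected vectors $\tilde u_i=Q(Ae_i)$ and the Gram/eigenspace analysis are correct as far as they go, but the step you yourself flag as the obstacle is a genuine gap, and it is essentially circular. The selection problem you pose --- given a projection $P_W$ on $\mathbb{R}^\tau$ with small diagonal entries and small rank, find $\sigma_2\subset\tau$ with $|\sigma_2|\geq|\tau|/2$ and $\|P_WP_{\sigma_2}\|\leq\rho<1$ for a universal $\rho$ --- is a paving statement of Bourgain--Tzafriri strength. Known arguments that deliver it (random partitions, iterative selection \`a la Spielman--Srivastava, or the Marcus--Spielman--Srivastava machinery) either lose a logarithmic factor, lose more than a factor $2$ in cardinality, or are at least as deep as the theorem you are proving. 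There is no elementary diagonal/trace estimate that closes this: small diagonal of $P_W$ only gives $|(P_W)_{ij}|<2t$ entrywise, which controls nothing useful about the operator norm of a principal submatrix of size $|\tau|/2$.

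You are also aiming for more than the lemma asserts. Your scheme would yield the $\ell_2$ lower bound $\|\sum_{i\in\sigma_2}a_i\tilde u_i\|\geq c\|a|_{\sigma_2}\|_2$ directly; the lemma only claims the weak $\ell_1$ inequality with the $|\sigma_2|^{-1/2}$ factor, and the $\ell_2$ upgrade is the content of the \emph{next} lemma. The paper exploits this by taking a completely different, combinatorial route that avoids any spectral selection. From Lemma~\ref{BTstep1} one extracts, for each $i\in\sigma$, the residual $u_i'=Ae_i-P_{\langle Ae_k:k\in(\sigma_1\cup\sigma)\setminus\{i\}\rangle}Ae_i$ and its normalization $u_i$; these satisfy the biorthogonality $\langle Ae_j,u_i\rangle=0$ for $j\in(\sigma_1\cup\sigma)\setminus\{i\}$ and $\langle Ae_i,u_i\rangle>1/2$. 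A random-sign expectation gives $\mathbb{E}\|\sum_{i\in\sigma}\varepsilon_iu_i\|^2=|\sigma|$, so at least $\tfrac34\cdot 2^{|\sigma|}$ sign vectors have $\|\sum\varepsilon_iu_i\|\leq 2\sqrt{|\sigma|}$. The Sauer--Shelah lemma then produces $\sigma_2\subset\sigma$ with $|\sigma_2|\geq|\sigma|/2$ on which \emph{every} sign pattern extends to such a ``good'' sign vector. For given scalars $(a_j)$, pick signs matching $\mathrm{sgn}(a_j)$ on $\sigma_2$, extend, and pair: $\big|\langle\sum_{j\in\sigma}\varepsilon_ju_j,\sum_{j\in\sigma_1\cup\sigma_2}a_jAe_j\rangle\big|$ both equals $\sum_{j\in\sigma_2}|a_j|\langle Ae_j,u_j\rangle\geq\tfrac12\sum_{j\in\sigma_2}|a_j|$ and is at most $2\sqrt{2|\sigma_2|}\,\|\sum a_jAe_j\|$. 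This delivers the $\ell_1$ bound with no spectral selection at all; the $\ell_2$ bound is then bootstrapped in Lemma~\ref{BTstep3} by an exhaustion argument combined with Khintchine's inequality.
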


\begin{proof}
From Lemma \ref{BTstep1} there exists $\sigma \subset \{1, \dotsc, n\} \setminus \sigma_1$ with $|\sigma| \ge d_1 n/\|A\|^2$, such that if for every $i \in \sigma$ we let $Ae_i = x_i$, then 
$$\norm{P_{\langle Te_k : k \in (\sigma_1 \cup \sigma) \setminus \{i\} \rangle} x_i} < \frac{1}{\sqrt2}.$$
For every $i \in \sigma$, let $u_i' = x_i - P_{\langle Te_k : k \in (\sigma_1 \cup \sigma) \setminus \{i\} \rangle} x_i$.
Then, by orthogonality, $$\norm{u_i'}^2 = \norm{x_i}^2 - \norm{ P_{\langle Te_k : k \in (\tau \cup \sigma) \setminus \{i\} \rangle} x_i }^2
> 1 - \frac{1}{2} = \frac{1}{2}$$
and $\norm{u_i'} \leq 1$.
In addition, for $i \in \sigma_1 \cup \sigma$ and  $j \in \sigma$ with $i \neq j$,
\begin{align*}
\langle x_i, u'_j \rangle 
&= \langle x_i, x_j - P_{\langle Te_k : k \in (\sigma_1 \cup \sigma) \setminus \{j\} \rangle} x_j \rangle \\
&= \langle x_i, x_j \rangle - \langle x_i, P_{\langle Te_k : k \in (\sigma_1 \cup \sigma) \setminus \{j\} \rangle} x_j \rangle \\
&= \langle x_i, x_j \rangle - \langle P_{\langle Te_k : k \in (\sigma_1 \cup \sigma) \setminus \{j\} \rangle} x_i, x_j \rangle = 0.
\end{align*}
Similarly, for $i\in\sigma$
\begin{align*}
\langle x_i, u'_i \rangle &= \|x_i\|^2  - \langle x_i, P_{\langle Te_k : k \in (\sigma_1 \cup \sigma) \setminus \{i\} \rangle} x_i\rangle\\
& = 1 - \|P_{\langle Te_k : k \in (\sigma_1 \cup \sigma) \setminus \{i\} \rangle} x_i\|^2 > 1/2.
\end{align*}
For $i\in\sigma$ let $u_i = \|u_i'\|^{-1}u_i'$, so that $1\geq \langle x_i,u_i\rangle > 1/2$.  

If $E(X)$ denotes the expected value of the random variable $X$ over $\{-1,1\}^\sigma$ with the uniform probability measure, a simple calculation yields
\[\mathbb E\Big(\Big\|\sum_{i \in \sigma} \varepsilon_i u_i\Big\|^2 \Big)  = \sum_{i \in \sigma} \norm{u_i}^2 = |\sigma|.\]
Thus if
\[\mathcal{E} = \Big\{(\varepsilon_i)_{i \in \sigma} \in \{-1, 1 \}^{\sigma} : \Big\|\sum_{i \in \sigma} \varepsilon_i u_i\Big\| \le 2 \sqrt{|\sigma|} \Big\},\]
it follows then by Markov's inequality that
\[|\mathcal{E}| \ge \frac{3}{4} 2^{|\sigma|}.\]
By a theorem of Sauer and Shelah (see, e.g., \cite[Page 144]{bourgain:tzafriri:1987} or \cite{sauer:1972}), whenever $k$ satisfies
\begin{equation}
\label{sauer shelah assumption}
|\mathcal{E}| > \sum_{i=0}^{k-1} \binom{|\sigma|}{i},
\end{equation}
then there exists a subset $\sigma_2 \subset \sigma$ of cardinality $k$
such that for each tuple $(\varepsilon_i)_{i \in \sigma_2}$
there exists an extension $(\varepsilon_i)_{i \in \sigma}$ which belongs to $\mathcal{E}$.
Note that \eqref{sauer shelah assumption} holds for $k \geq \frac{|\sigma|}{2}$ and therefore we may choose $\sigma_2$ with $|\sigma_2| \geq (d_1n)/(2\|A\|^2)$ and $|\sigma_2| \geq |\sigma|/2$.

To see that $\sigma_2$ satisfies the conclusion, let $\{a_j\}_{j \in \sigma_1 \cup \sigma_{2}}$ be given. Define $\theta_i \in \{-1 , 1 \}$ for each $i \in \sigma_2$ so that $\theta_i a_i = |a_i|$.
Then let $(\varepsilon_i)_{i \in \sigma}$ be an extension of $(\theta_i)_{i \in \sigma_2}$ that belongs to $\mathcal{E}$. It follows that
\begin{align*}
\Big|\Big\langle \sum_{j \in \sigma} \varepsilon_j u_j, \sum_{j \in \sigma_1 \cup \sigma_2} a_j x_j \Big \rangle \Big| 
&\le \Big\|\sum_{j \in \sigma} \varepsilon_j u_j\Big\|\Big\|\sum_{j \in \sigma_1 \cup \sigma_2} a_j x_j\Big\| \\
& \le 2 \sqrt{|\sigma|} \Big\|\sum_{j \in \sigma_1 \cup \sigma_2} a_j x_j\Big\| \\
&\le 2\sqrt{2|\sigma_2|}\Big\|\sum_{j \in \sigma_1 \cup \sigma_2} a_j x_j\Big\|.
\end{align*}
Hence 
\begin{align*}
2 \sqrt{2} \sqrt{|\sigma_2|}  \Big\|\sum_{j \in \sigma_1 \cup \sigma_2} a_j x_j\Big\|
&\ge \Big|\Big\langle \sum_{j \in \sigma} \varepsilon_j u_j, \sum_{j \in \sigma_1 \cup \sigma_2} a_j x_j\Big \rangle \Big| = \sum_{j \in \sigma_2} |a_j| \langle x_j, u_j \rangle \ge  \frac{1}{2}\sum_{j \in \sigma_2} |a_j|. \qedhere
\end{align*}
\end{proof}

In the next step, we will require Khintchine's inequality which states the following. Let $\{1,-1\}^n$ be endowed with the uniform probability measure. For $\{b_\ell\}_{\ell=1}^n$ in $\mathbb{R}$ we have
\[\mathbb{E}\left(\left|\sum_{\ell=1}^{n} \varepsilon_{\ell} b_{\ell}\right|\right) \ge \frac{1}{\sqrt{2}}\left(\sum_{\ell=1}^{n} \left|b_{\ell}\right|^2\right)^{1/2}.\]

\begin{lem}\label{BTstep3}
For every $\sigma_1\subset \{1,\ldots,n\}$ with $|\sigma_1| \leq d_1n\|A\|^{-2}$, there exists $\sigma_2\subset \{1,\ldots,n\}\setminus\sigma_1$ with $\sigma_2\geq (d_1/4) n\|A\|^{-2}$ such that for every $\{a_j\}_{j \in \sigma_1 \cup \sigma_{2}}$ in $\mathbb{R}$:
\[\norm{\sum_{j \in \sigma_1 \cup \sigma_{2}}a_jAe_j} \ge \frac{1}{16\sqrt{2}} \left(\sum_{j \in \sigma_{2}}|a_j|^2\right)^{1/2}.\]
\end{lem}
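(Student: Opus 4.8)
The plan is to bootstrap the $\ell_1$-type lower bound of Lemma~\ref{BTstep2} to the $\ell_2$-type lower bound asserted here by an averaging argument over random signs built on Khintchine's inequality, paying for this passage with one further halving of the selected column set (which is why the cardinality constant drops from $d_1/2$ to $d_1/4$).

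First I would apply Lemma~\ref{BTstep2} to the given $\sigma_1$ to obtain $\hat\sigma\subset\{1,\dots,n\}\setminus\sigma_1$ with $|\hat\sigma|\ge (d_1/2)n\|A\|^{-2}$ and
\[
\Big\|\sum_{j\in\sigma_1\cup\hat\sigma} b_j Ae_j\Big\| \ \ge\ \frac{1}{4\sqrt{2|\hat\sigma|}}\sum_{j\in\hat\sigma}|b_j|
\]
for every choice of scalars. It is convenient to keep the data behind this bound, namely the almost-orthogonal system coming from Lemma~\ref{BTstep1}: a set containing $\hat\sigma$ together with, for each index $j$ in it, a vector $u_j$ satisfying $\langle u_j,Ae_k\rangle=0$ for $k\neq j$, $\rho_j:=\langle u_j,Ae_j\rangle\in(\tfrac12,1]$, and $\|u_j\|\le1$; in particular $\langle\sum_j\varepsilon_j u_j,\ \sum_k a_k Ae_k\rangle=\sum_j\varepsilon_j a_j\rho_j$ whenever the index sets are compatible. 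I would also record the elementary reduction that, after minimising over the $\sigma_1$-coordinates (a Schur-complement computation that replaces each $Ae_j$ by its residual $P_{\langle Ae_k:k\in\sigma_1\rangle^{\perp}}Ae_j$), the conclusion for a candidate $\sigma_2\subset\hat\sigma$ becomes a plain lower Riesz bound for those residual vectors, so the $\sigma_1$-coefficients may be ignored.

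Second --- and this is the heart of the argument --- I would re-run the selection step of Lemma~\ref{BTstep2} (Rademacher averaging, Markov's inequality, Sauer--Shelah) to pass from $\hat\sigma$ to a subset $\sigma_2$ with $|\sigma_2|\ge|\hat\sigma|/2\ge(d_1/4)n\|A\|^{-2}$ on which every sign pattern $(\theta_j)_{j\in\sigma_2}$ extends to the good Rademacher event. Then, for arbitrary scalars $a$ supported on $\sigma_1\cup\sigma_2$ and uniformly random signs $\theta$, one estimates $\big|\sum_{j\in\sigma_2}\theta_j a_j\rho_j\big|$ from above --- for every realisation of $\theta$ --- by a bounded multiple of $\big\|\sum_j a_j Ae_j\big\|$, while Khintchine's inequality gives $\mathbb{E}_\theta\big|\sum_{j\in\sigma_2}\theta_j a_j\rho_j\big|\ge\frac1{\sqrt2}\big(\sum_{j\in\sigma_2}a_j^2\rho_j^2\big)^{1/2}\ge\frac1{2\sqrt2}\big(\sum_{j\in\sigma_2}|a_j|^2\big)^{1/2}$. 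Taking expectations and combining the two estimates then yields $\big\|\sum_{j\in\sigma_1\cup\sigma_2}a_j Ae_j\big\|\ge\frac1{16\sqrt2}\big(\sum_{j\in\sigma_2}|a_j|^2\big)^{1/2}$ once the constants are accounted for, provided the upper estimate invoked above can be secured with an absolute constant.

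The step I expect to be the main obstacle is exactly that: securing the ``bounded multiple'' with an absolute constant. Estimating $\big|\sum_{j\in\sigma_2}\theta_j a_j\rho_j\big|$ by Cauchy--Schwarz against the full Rademacher sum $\sum_j\theta_j u_j$ is hopeless, because that sum typically has norm of order $|\hat\sigma|^{1/2}$; such an estimate would merely reproduce --- in fact weaken --- the $\ell_1$-bound and could never close the gap to an $\ell_2$-bound. The bootstrap works only if the Sauer--Shelah step is organised around the event that controls the \emph{right} test vector, which forces the almost-orthogonality of Lemma~\ref{BTstep1} and the $\ell_1$-estimate of Lemma~\ref{BTstep2} to be used together rather than one after the other; this interplay is also where the two factor-$2$ cardinality losses and the extra factor $4$ in the constant are spent. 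Equivalently, the difficulty is to handle the ``spiky'' and the ``flat'' parts of the coefficient vector $a$ on the same footing while keeping $|\sigma_2|$ proportional to $n\|A\|^{-2}$, rather than the $n\|A\|^{-4}$ that a crude spectral (restricted-invertibility) argument would produce.
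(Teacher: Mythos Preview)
There is a genuine gap, and it sits exactly where you say it does. Your plan needs, for each fixed coefficient vector $a$ and every sign pattern $\theta$ in your good set,
\[
\Big|\sum_{j\in\sigma_2}\theta_j a_j\rho_j\Big|\ \le\ C\,\Big\|\sum_{k\in\sigma_1\cup\sigma_2}a_k Ae_k\Big\|
\]
with an absolute constant $C$. This estimate cannot be secured. If your good set is rich enough for the Sauer--Shelah extension property, then taking the supremum over $\theta$ on the left gives $\sum_{j\in\sigma_2}|a_j|\rho_j\ge\tfrac12\sum_{j\in\sigma_2}|a_j|$, and there is no absolute $C$ with $\sum_{j}|a_j|\le C\|\sum_k a_k Ae_k\|$: the $\ell_1$ bound of Lemma~\ref{BTstep2} runs in the \emph{opposite} direction, and the deficit is genuinely of order $|\sigma_2|^{1/2}$. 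Replacing the pointwise-in-$\theta$ bound by an averaged one does not help either, since by Khintchine the left-hand expectation is already comparable to $(\sum_j|a_j|^2)^{1/2}$ and the argument becomes circular. No reorganisation of the Sauer--Shelah event around a different ``test vector'' can repair this, because the obstruction is the norm of \emph{any} linear functional that reads off individual signed coordinates on a set of size proportional to $|\hat\sigma|$.

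The paper's proof is of a completely different shape: it does not apply Khintchine to the coefficients of a single vector at all, and it does not re-run Sauer--Shelah. Instead it argues by contradiction. If every subset $\sigma_2\subset\hat\sigma$ of cardinality $\ge|\hat\sigma|/2$ failed the $\ell_2$ bound, one greedily builds vectors $y_\ell=\sum_{k}b_{\ell,k}Ae_k$ with $\sum_{k\in\upsilon_\ell}|b_{\ell,k}|^2=1$ and $\|y_\ell\|<c'/4$, supported on nested sets $\upsilon_1\supset\upsilon_2\supset\cdots$, stopping when $|\upsilon_{m+1}|<|\hat\sigma|/2$; a pigeonhole count forces $m\ge|\hat\sigma|/2$. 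One then averages over random signs attached to the \emph{indices} $\ell$ (not to the columns $j$): the smallness of the $y_\ell$ controls $(\sum_\ell\|y_\ell\|^2)^{1/2}$ from above, while the $\ell_1$ estimate of Lemma~\ref{BTstep2} combined with Khintchine applied, for each fixed column $k$, to $\sum_\ell\varepsilon_\ell b_{\ell,k}$ bounds the same quantity from below by $c'(2|\hat\sigma|)^{-1/2}\sum_k(\sum_\ell|b_{\ell,k}|^2)^{1/2}$. Using the stopping rule to cap $\sum_\ell|b_{\ell,k}|^2\le2$ yields $m<|\hat\sigma|/4$, a contradiction. The idea you are missing is precisely this iterated ``bad vector'' construction, which turns the $\ell_1\!\to\!\ell_2$ upgrade into a counting inequality rather than a single Cauchy--Schwarz estimate.
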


\begin{proof}
Consider the set $\sigma$ supplied by applying Lemma \ref{BTstep2} and denote $c' =1/(4\sqrt{2})$. We need to establish a subset $\sigma_2 \subset \sigma$ of cardinality $|\sigma_2| \geq |\sigma|/2$ such that for any choice of coefficients in $\{a_j\}_{j \in \sigma_1 \cup \sigma_{2}}$:
\[\norm{\sum_{j \in \sigma_1 \cup \sigma_{2}}a_jAe_j} \ge \frac{c'}{4} \left(\sum_{j \in \sigma_{2}}|a_j|^2\right)^{1/2}.\]

Suppose, for contradiction, that such a subset does not exist. Let $Ae_j=x_j$. Put $\upsilon_{1}=\sigma$. Then there exists a vector $y_1=\sum_{k \in \upsilon_1 \cup \sigma_1}b_{1,k}x_k$ such that $\|y_1\| < c'/4$, but $\sum_{k \in \upsilon_1} \left| b_{1,k} \right|^2 =1$.

Assume that we have already constructed subsets $\upsilon_1 \supset \upsilon_2 \supset \cdots \supset \upsilon_{p}$ with $|\upsilon_p| \geq |\sigma|/2$, and vectors $\{y_{\ell}\}_{\ell=1}^p$ such that $y_{\ell}=\sum_{k \in \upsilon_{\ell} \cup\sigma_1}b_{\ell,k}x_k$ and $\|y_{\ell}\| < c'/4$ and $\sum_{k \in \upsilon_{\ell}} \left| b_{{\ell},k} \right|^2 =1$, for $1 \leq \ell \leq p$. Consider the set
$$\upsilon_{p+1}=\left\{k \in \upsilon_p \ : \ \sum_{\ell=1}^{p}\left| b_{\ell,k} \right|^2 <1 \right\}.$$ 
If $|\upsilon_{p+1}| < |\sigma|/2$, then stop the procedure. On the other hand, if $|\upsilon_{p+1}| \geq |\sigma|/2$, then there exists a vector 
$$y_{p+1}= \sum_{k \in \upsilon_{p+1} \cup \sigma_1} b_{p+1, k} x_k$$
such that $\|y_{l+1}\|< c'/4$ and $\sum_{k \in \upsilon_{p+1}} \left| b_{j,k} \right|^2 =1$.

By the pigeon-hole principle, this algorithm must eventually terminate, say after $m$ steps. Then

$$|\upsilon_{m+1}| < \frac{|\sigma|}{2}$$
and thus, for $\ell \in \sigma \setminus \upsilon_{m+1}$, we have that
$$\sum_{\ell=1}^{m} \left| b_{\ell,k} \right| ^2 \geq 1$$
with the convention that $b_{\ell,k}=0$ for those $\ell$ and $k$ for which $b_{\ell, k}$ has not been defined.

Hence
\begin{align*}
    m&=\sum_{\ell=1}^{m} \sum_{k \in \upsilon_{\ell}}\left| b_{\ell,k} \right|^2 = \sum_{k \in \sigma} \sum_{\ell=1}^{m} \left| b_{\ell,k} \right|^2 \geq \sum_{k \in \sigma \setminus \upsilon_{m+1}} \sum_{\ell=1}^{m} \left| b_{\ell,k} \right| ^2 \geq |\sigma \setminus \upsilon_{m+1}|
\end{align*}
and this implies that $m \geq |\sigma| / 2$.

On the other hand, we have 
\begin{align}
\frac{c' \sqrt{m}}{4} >\left(\sum_{\ell=1}^{m}\left\|y_{\ell}\right\|^{2}\right)^{\frac{1}{2}} &= \left(\int\left\|\sum_{\ell=1}^{m} \varepsilon_{\ell} y_{\ell}\right\|^2 d \varepsilon\right)^{\frac{1}{2}}\nonumber \\
& \geq \int\left\|\sum_{\ell=1}^{m} \varepsilon_{\ell} y_{\ell}\right\| d \varepsilon \nonumber \\
&=\int\left\|\sum_{k \in \sigma \cup \sigma_1}\left(\sum_{\ell=1}^{m} \varepsilon_{\ell} b_{\ell, k}\right) x_{k}\right\| d\varepsilon \nonumber \\
& \geq \frac{c'}{\sqrt{\left|\sigma\right|}} \sum_{k \in \sigma} \int\left|\sum_{\ell=1}^{m} \varepsilon_{\ell} b_{\ell, k}\right| d \varepsilon  \label{eq1}\\
& \geq \frac{c'}{\sqrt{2\left|\sigma\right|}} \sum_{k \in \sigma}\left(\sum_{\ell=1}^{m}\left|b_{\ell, k}\right|^{2}\right)^{\frac{1}{2}} \label{eq2}
\end{align}
where (\ref{eq1}) follows from Lemma \ref{BTstep2}, and (\ref{eq2}) follows from Khintchine's inequality.

However, the inductive construction implies 
\begin{align}
\sum_{\ell=1}^{m}\left|b_{\ell, k}\right|^{2} \leq 2 \label{eq3}
\end{align}
for all $k \in \sigma$. This is clear if $k \in \upsilon_{m+1}$ because of how we construct $\upsilon_{m+1}$, while if $k \in \left(\upsilon_{p} \setminus \upsilon_{p+1}\right)$ for some $1 \leq p \leq m$, we have that
$$\sum_{\ell=1}^{m}\left|b_{\ell, k}\right|^{2}=\sum_{\ell=1}^{p-1}\left|b_{\ell, k}\right|^{2}+\left|b_{p, k}\right|^{2}<2.$$

It follows from (\ref{eq2}) and (\ref{eq3}) that

\begin{align*}
\cfrac{\sqrt{m\left|\sigma\right|}}{2} &> \sqrt{2} \sum_{k \in \sigma}  \left(\sum_{\ell=1}^{m}\left|b_{\ell, k}\right|^{2}\right)^{\frac{1}{2}}\\
&\geq \sum_{k \in \sigma}\left(\sum_{\ell=1}^{m}\left|b_{\ell, k}\right|^{2}\right)^{\frac{1}{2}}  \left(\sum_{\ell=1}^{m}\left|b_{\ell, k}\right|^{2}\right)^{\frac{1}{2}}\\
&= \sum_{k \in \sigma} \sum_{\ell=1}^{m}\left|b_{\ell, k}\right|^{2}=m.
\end{align*}
Thus $m < |\sigma|/4$ which contradicts the fact that $m \geq \left|\sigma\right| / 2$.
\end{proof}

We now prove Theorem \ref{BTbigthm}.

\begin{proof}[Proof of Theorem \ref{BTbigthm}]
First apply Lemma \ref{BTstep3} to obtain $\sigma_2$ that satisfies the desired conclusion for $d_2 = d_1/4$. We will next assume $|\sigma_1|\geq d_2n\|A\|^{-2}$ and $d_1d_2n\|A\|^{-4} \geq 1$.  Let $d_3 = d_2^2$.  Choose $\tilde\sigma_2\subset\sigma_2$ with the largest possible cardinality that satisfies the implicit bound $|\stilde \sigma_2| \leq d_1(|\sigma_1|+|\stilde\sigma_2|)/\|A\|^2$ (such sets exist; the empty set is one such example). Define $n' = |\sigma_1| + |\stilde\sigma_2|$ and note that
\[
n'\geq |\sigma_1| \geq d_2\frac{n}{\|A\|^2} .
\]
Since $|\sigma_2|\geq d_2n/\|A\|^2$, the set $\sigma_2$ is sufficiently large to have allowed us to choose $\stilde \sigma_2$ such that
\begin{align*}
|\stilde \sigma_2| \geq \lfloor d_1d_2n/\|A\|^4 \rfloor &\geq (d_1d_2n/\|A\|^4)/2\quad(\text{because }d_1d_2n/\|A\|^4\geq 1)\\
 &\geq d_3n/\|A\|^{4}.
\end{align*}
Now we define $V=\langle e_j : j \in \sigma_1 \cup \stilde \sigma_2 \rangle$ and $A'=A{|_V}$. An application of Lemma \ref{BTstep3} to the matrix $A'$ and the set $\stilde \sigma_{2}$, yields a subset $\tau_1 \subset \sigma_1$ such that 
\[
|\tau_1| \geq d_2n'/\|A'\|^2 \geq d_2^2n/\|A\|^4\geq d_3n/\|A\|^{4},
\]
and for any choice of scalars $\{a_{j}\}_{j \in \tau_1 \cup \stilde \sigma_{2}}$, we have
\begin{align}
    \norm{\sum_{j \in \tau_1 \cup \stilde \sigma_{2}}a_jAe_j} \ge \frac{1}{16\sqrt{2}} \left(\sum_{j \in \tau_1}|a_j|^2\right)^{1/2}. \label{eq4}
\end{align}
As $\sigma_2$ was provided by Lemma \ref{BTstep3} and  $\tau_1\subset \sigma_1$, $\stilde\sigma_2\subset\sigma_2$ we also have
\begin{align}
    \norm{\sum_{j \in \tau_1 \cup \stilde \sigma_{2}}a_jAe_j} \ge \frac{1}{16\sqrt{2}} \left(\sum_{j \in \stilde\sigma_2}|a_j|^2\right)^{1/2}. \label{eq5}
\end{align}
Recall that for any two non-negative numbers $x$, $y$ we have $\max\{x,y\} \geq (1/\sqrt2)(x^2+y^2)^{1/2}$, which in conjunction with \eqref{eq4} and \eqref{eq5} yields the desired inequality.
\end{proof}

The above theorem guarantees the existence of subsets of columns that can be ``stitched" together without violating the minimal stretch property.

\subsection{The proof of Theorem \ref{finalresultBTstatement}}
Recall that an $m$-dimensional subspace $U$ of $\mathbb{R}^n$ is a $\lambda$-quadratic convex combination of $U_{\sigma_1}$ and $U_{\sigma_2}$, where $\lambda\in[0,1]$ and $\sigma_1 = \{i_1<\cdots<i_m\}$, $\sigma_2 = \{j_1<\cdots<j_m\}$ are disjoint subsets of $\{1,\ldots,n\}$, such that $U$ is spanned by the orthonormal sequence $u_k = \lambda^{1/2}e_{i_k}+(1-\lambda)^{1/2}e_{j_k}$, $k=1,\ldots,m$.

In order to prove Theorem \ref{finalresultBTstatement}, we will combine Theorem \ref{BTbigthm} with a continuous traversal between disjoint subsets of the basis via quadratic convex combinations.

\begin{lem} \label{columnswitch}
 Let $A : [a,b] \to M_{n\times n}$ be a continuous matrix function and let $\sigma_1$, $\sigma_2 \subset \{1,\dotsc,n\}$ such that $|\sigma_1| = |\sigma_2|$ and $\sigma_1 \cap \sigma_2 = \emptyset$. Let $U = \langle e_i : i \in \sigma_1\cup \sigma_2\rangle$ and suppose there exists a $c>0$ such that $m_{A(t)|_U} \ge c$ for all $t \in [a,b]$. Let $\{U(t)\}_{t\in[a,b]}$ be such that, for each $t\in[a,b]$, $U(t)$ is the $(1 - (t-a)/(b-a))$-quadratic convex combination of $U_{\sigma_1}$ and $U_{\sigma_2}$. Then $\{U(t)\}_{t\in[a,b]}$ is a continuous choice of subspaces of $\mathbb{R}^n$ such that $U(a) = U_{\sigma_1}$, $U(b) = U_{\sigma_2}$, and $m_{A(t)|_{U(t)}} \ge c$ for all $t \in [a,b]$.
\end{lem}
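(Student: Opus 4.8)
The plan is to write down an explicit continuous orthonormal basis for $\{U(t)\}_{t\in[a,b]}$ and then observe that the minimal-stretch bound transfers for free because every $U(t)$ is contained in the fixed subspace $U$.

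First I would fix the enumerations $\sigma_1 = \{i_1 < \cdots < i_m\}$ and $\sigma_2 = \{j_1 < \cdots < j_m\}$ and set $\lambda(t) = 1 - (t-a)/(b-a)$, which is continuous and decreasing on $[a,b]$ with $\lambda(a) = 1$, $\lambda(b) = 0$, and $\lambda(t) \in [0,1]$ throughout. Define $u_k(t) = \lambda(t)^{1/2} e_{i_k} + (1-\lambda(t))^{1/2} e_{j_k}$ for $k = 1, \ldots, m$. By Definition \ref{convex combos of spaces}, $u_1(t), \ldots, u_m(t)$ is, for each fixed $t$, precisely the orthonormal basis spanning $U(t)$; orthonormality is a one-line check since the $2m$ indices $i_1, \ldots, i_m, j_1, \ldots, j_m$ are pairwise distinct (using $\sigma_1 \cap \sigma_2 = \emptyset$), so $\langle u_k(t), u_\ell(t)\rangle = 0$ for $k \neq \ell$ and $\|u_k(t)\|^2 = \lambda(t) + (1-\lambda(t)) = 1$. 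Since $t \mapsto \lambda(t)$ is continuous with values in $[0,1]$ and $s \mapsto s^{1/2}$ is continuous on $[0,1]$, each $u_k : [a,b] \to \mathbb{R}^n$ is continuous; hence $\{U(t)\}_{t\in[a,b]}$ admits a continuous choice of basis, and by Lemma \ref{continuous gram-schmidt} it is a continuous choice of subspaces. Evaluating at the endpoints, $u_k(a) = e_{i_k}$ gives $U(a) = U_{\sigma_1}$ and $u_k(b) = e_{j_k}$ gives $U(b) = U_{\sigma_2}$.

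The remaining point, the lower $\ell_2$-bound, is immediate once one notes that each $u_k(t)$ lies in $\langle e_i : i \in \sigma_1 \cup \sigma_2\rangle = U$, so $U(t) \subseteq U$ for every $t \in [a,b]$. Taking the infimum of $\|A(t)x\|$ over the smaller unit sphere of $U(t)$ can only increase it, so $m_{A(t)|_{U(t)}} \geq m_{A(t)|_U} \geq c$, which is exactly the claim. I do not anticipate a genuine obstacle: the content of the lemma is the conceptual observation that a quadratic convex combination of $U_{\sigma_1}$ and $U_{\sigma_2}$ never leaves $U_{\sigma_1} + U_{\sigma_2}$, so a uniform minimal-stretch bound on the larger space automatically persists along the entire stitching path, while continuity is visible directly from the closed-form interpolation. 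The only mild care needed is checking that the interpolated vectors remain orthonormal (hence $U(t)$ stays $m$-dimensional) at the endpoint values $\lambda = 0, 1$ as well as in the interior.
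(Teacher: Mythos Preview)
Your proof is correct and follows essentially the same route as the paper: write down the explicit continuous orthonormal basis $u_k(t)$, invoke Lemma~\ref{continuous gram-schmidt} for continuity, read off the endpoint values, and use $U(t)\subset U$ to transfer the minimal-stretch bound. The paper's argument is slightly terser but identical in substance.
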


\begin{proof}
Write $\sigma_1 = \{i_1<\cdots<i_m\}$, $\sigma_2 = \{j_1<\cdots<j_m\}$ so that for every $t \in [a,b]$,
\[u_k(t) = \Big(1-\frac{t-a}{b-a}\Big)^{1/2}e_{i_k} + \Big(\frac{t-a}{b-a}\Big)^{1/2}e_{j_k},\;k=1,\ldots,m\]
is an orthonormal basis for $U(t)$. As this is a continuous choice of orthonormal basis, by Lemma \ref{continuous gram-schmidt}, $\{U(t)\}_{t\in[a,b]}$ is a continuous choice of subspaces and clearly $U(a) = U_{\sigma_1}$, $U(b) = U_{\sigma_2}$.

To complete the proof, for $t\in[a,b]$ note that
\begin{align*}
    m_{A(t)|_{U(t)}} &= \inf\{\|A(t)x\| : x \in U(t), \|x\| = 1\}\\
    &\ge \inf\{\|A(t)x\| : x \in U, \|x\| = 1\}\quad\text{(as $U(t)\subset U$)}\\
    &= m_{A(t)|_U}\ge c. \qedhere
\end{align*}
\end{proof}

We now have all the tools at our disposal to prove Theorem \ref{finalresultBTstatement}, which is restated for convenience.

\begin{thm} \label{finalresultBT}
There exist universal constants $c = 1/33$ and $d = 2^{-15}/100$ such that for all continuous matrix functions $A: \mathbb{R} \rightarrow M_{n \times n}$ with the property that $\|A(t)e_i\|=1$ for all $t \in \mathbb{R}$ and $ 1 \le i \le n$, there exists a continuous family of $m$-dimensional subspaces $\{U(t)\}_{t \in \mathbb R}$ of $\mathbb{R}^n$ with $m \ge dn/\Lambda^4$ where $\Lambda = \sup_{t\in\mathbb{R}} \|A(t)\|$ such that $\|A(t)v\| \geq c \|v\|$ for every $t \in \mathbb{R}$ and every $v \in U(t)$. Furthermore, each subspace $U(t)$ is a quadratic convex combination of disjoint basis vectors.
\end{thm}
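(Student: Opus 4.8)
\emph{Plan of proof.}
We carry out the scheme sketched after the statement. First, two harmless reductions. We may assume $d_3 n/\Lambda^4\ge 1$, where $d_3$ is the constant of Theorem~\ref{BTbigthm}: otherwise $n\ge 2$ and the \emph{constant} family $U(t)=\langle e_1\rangle$ — which is the $1$-quadratic convex combination of $U_{\{1\}}$ and $U_{\{2\}}$ — already works, since $\|A(t)e_1\|=1$. Also, passing to a subspace only increases $m_{A(t)|_{\cdot}}$, so it suffices to build a continuous family of subspaces of dimension exactly $m:=\lceil d_3 n/\Lambda^4\rceil$ (so $m\ge d_3n/\Lambda^4$, i.e. $d=d_3=2^{-15}/100$). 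Finally, the proof of Theorem~\ref{BTbigthm} is readily checked to remain valid with every occurrence of $\|A\|$ replaced by any fixed $\lambda\ge\|A\|$ (one uses $\|PA\|_{\mathrm{HS}}^2\le\lambda^2\,\mathrm{rank}(PA)$, etc.); we therefore run it at every sample point with $\|A(t_i)\|$ replaced by the uniform bound $\Lambda$, which removes all reconciliation of operator norms between sample points. Fix $\varepsilon=1/1056$ (so $1/32-\varepsilon=1/33$) and apply Lemma~\ref{compactification} to $A$ and $\varepsilon$ to obtain sample points $(t_i)_{i\in\mathbb Z}$; recall that a lower stretch bound on a subspace holding at $t_i$ then persists, with loss $\varepsilon$, throughout $[t_{i-1},t_{i+1}]$.

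The crux is to produce sets $\sigma_i\subset\{1,\dots,n\}$, $i\in\mathbb Z$, each of cardinality at least $m$, with $\sigma_i\cap\sigma_{i+1}=\emptyset$ and $m_{A(t_i)|_{U_{\sigma_i\cup\sigma_{i+1}}}}\ge 1/32$ for every $i$. For this, first a \emph{forward pass}: iterating the first part of Theorem~\ref{BTbigthm} yields ``large'' sets $R_i$, each truncated to the fixed size $\lceil d_2 n/\Lambda^2\rceil$ (a legal input at every $t_j$ thanks to $d_3n/\Lambda^4\ge1$), with $R_i\cap R_{i+1}=\emptyset$ and with $R_{i+1}$ equal to the set the theorem produces from input $R_i$ at $t_i$. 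Then a \emph{backward pass}, processing the $t_i$ in \emph{decreasing} order: having already defined $\sigma_{i+1}\subset R_{i+1}$, apply the \emph{bilateral} second part of Theorem~\ref{BTbigthm} at $t_i$ with input set $R_i$, associated first–part set $R_{i+1}$, and with the freely–choosable refinement of $R_{i+1}$ taken to be precisely $\sigma_{i+1}$ (admissible since $|\sigma_{i+1}|=m$ satisfies the required size constraint $|\sigma_{i+1}|\le d_1(|R_i|+|\sigma_{i+1}|)/\Lambda^2$, and since the proof of the second part goes through for any refinement meeting that constraint). This returns $\tau_i\subset R_i$ with $|\tau_i|\ge d_2^2 n/\Lambda^4=2d_3n/\Lambda^4\ge m$ and $m_{A(t_i)|_{U_{\tau_i\cup\sigma_{i+1}}}}\ge1/32$; set $\sigma_i:=\tau_i$. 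Then $\sigma_i\subset R_i$ and $\sigma_{i+1}\subset R_{i+1}$ are disjoint, the bound holds on $U_{\sigma_i\cup\sigma_{i+1}}$ at $t_i$, and — the point of processing in reverse — the single set $\sigma_i$ serves both as the right endpoint of $[t_{i-1},t_i]$ and the left endpoint of $[t_i,t_{i+1}]$. The two–sided indexing of $\mathbb Z$ is handled by performing both passes on $[-N,N]$ for each $N$ (choosing $\sigma_N^{(N)}\subset R_N^{(N)}$ of size $m$ to start the backward pass) and passing to a diagonal limit, all the relevant properties involving only consecutive pairs.

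Granting the $\sigma_i$, the proof concludes routinely. On each interval $[t_i,t_{i+1}]$ let $\{U(t)\}$ be the family furnished by Lemma~\ref{columnswitch} traversing $U_{\sigma_i}\to U_{\sigma_{i+1}}$; by Lemma~\ref{compactification} its hypothesis $m_{A(t)|_{U_{\sigma_i\cup\sigma_{i+1}}}}\ge 1/32-\varepsilon=1/33$ holds on all of $[t_i,t_{i+1}]$, so $\{U(t)\}$ is a continuous family of $m$-dimensional quadratic convex combinations of disjoint basis vectors with $\|A(t)v\|\ge(1/33)\|v\|$ on it. The pieces agree at each common endpoint $t_{i+1}$ (both give $U_{\sigma_{i+1}}$), and continuity of the orthogonal projection $P(t)$ is a local condition, so (exactly as in the proof of Theorem~\ref{projection yields basis}) the glued family is a continuous choice of subspaces on all of $\mathbb R$. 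Taking $c=1/33$ and $d=d_3$ finishes the argument.

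The main obstacle is precisely the construction of the $\sigma_i$: each must be compatible — in the sense of the $\ell_2$-lower bound on the joined subspace — with \emph{both} neighbours, and the one–sided first part of Theorem~\ref{BTbigthm} alone does not achieve this. It is the bilateral second part, together with the freedom to prescribe the refinement of its second set, that lets a single bi-infinite sequence $(\sigma_i)$ interlock with itself once the backward pass is run in reverse. Everything else — the discretization, the quadratic–convex–combination interpolation of Lemma~\ref{columnswitch}, the gluing, and the bookkeeping of the constants — is mechanical.
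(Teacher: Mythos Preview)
Your approach is correct and genuinely different from the paper's. The paper runs a single forward pass: at $t_{i+1}$ it applies Theorem~\ref{BTbigthm} to the pair $(A(t_{i+1}),\sigma_i)$ and records \emph{both} outputs $\tilde\tau_i\subset\sigma_i$ and $\dtilde\sigma_{i+1}\subset\sigma_{i+1}$. This leaves the endpoint mismatch $\dtilde\sigma_{i+1}\neq\tilde\tau_{i+1}$, which the paper repairs by inserting an auxiliary set $\xi_{i+1}\subset\sigma_{i+1}\setminus(\tilde\tau_{i+1}\cup\dtilde\sigma_{i+1})$ (possible because $\lceil d_3 n/\Lambda^4\rceil<\tfrac13|\sigma_{i+1}|$), so each interval carries three applications of Lemma~\ref{columnswitch}. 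Your forward/backward scheme instead exploits the observation---correct upon inspecting the proof of Theorem~\ref{BTbigthm}---that $\tilde\sigma_2$ may be any prescribed subset of $\sigma_2$ meeting the size bound, so you can feed the already-chosen $\sigma_{i+1}$ in as $\tilde\sigma_2$ and obtain $\sigma_i$ directly compatible with it. The payoff is a single switch per interval and no auxiliary sets; the cost is the extra backward sweep and the compactness/diagonal extraction to handle bi-infinite $\mathbb Z$ (the paper avoids this by starting at $t_0$ and working outward, with one extra stitch at the origin). One small slip: you write ``set $\sigma_i:=\tau_i$'' but then use $|\sigma_{i+1}|=m$ at the next backward step; since $|\tau_i|\ge d_2^2 n/\Lambda^4=2d_3n/\Lambda^4\ge m$, you should truncate $\tau_i$ to size exactly $m$ before calling it $\sigma_i$, both so that Lemma~\ref{columnswitch} applies ($|\sigma_i|=|\sigma_{i+1}|$) and so that the size constraint on the prescribed $\tilde\sigma_2$ at step $i-1$ is met. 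With that adjustment your argument goes through.
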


\begin{proof}
Note that $c = 1/33$ is a perturbation of $1/32$ and $d = d_3$, which are from the conclusion of Theorem \ref{BTbigthm} and Remark \ref{actual constants}. If $d_1d_2n/\Lambda^4 \leq 1$, then we may simply select $U(t) = \langle e_1\rangle$, for all $t\in\mathbb{R}$. Hence, we may assume $d_1d_2n/\Lambda^4 \geq 1$.  Note that
\[\Big\lceil d_2\frac{n}{\Lambda^2}\Big\rceil =  \Big\lceil \frac{1}{4}d_1\frac{n}{\Lambda^2}\Big\rceil \leq  d_1\frac{n}{\Lambda^2},\]
since $d_1n/\Lambda^2\geq d_1d_2n/\Lambda^4 \geq 1$. Furthermore,
\begin{align}
\Big\lceil d_3 \frac{n}{\Lambda^4}\Big\rceil &\leq d_3 \frac{n}{\Lambda^4} + 1 \leq \frac{d_2}{2\Lambda^2}\Big(d_2 \frac{n}{\Lambda^2}\Big) + \frac{d_1}{\Lambda^2}\Big(d_2\frac{n}{\Lambda^2}\Big)\leq 2d_1\Big(d_2\frac{n}{\Lambda^2}\Big)<\frac{1}{3}\Big\lceil d_2\frac{n}{\Lambda^2}\Big\rceil.\label{onesixth}
\end{align}

By Lemma \ref{compactification} we can find an increasing sequence of points $(t_i)_{i\in \mathbb{Z}}$ such that for all $i\in\mathbb{Z}$ and $t$ in $[t_{i-1},t_{i+1}]$, $\|A(t_i) - A(t)\| \leq \varepsilon = 1/32-1/33$.  Moreover, by Theorem \ref{BTbigthm} applied to the matrix $A(t_0)$ and the empty set, there exists a subset $\sigma_0\subset\{1,\ldots,n\}$ with $|\sigma_0| =  \lceil d_2n/\Lambda^2\rceil \leq d_1n/\Lambda^2$ such that for any choice of coefficients $\{a_j\}_{j\in\sigma_0}$ we have
\[\Big\|\sum_{j\in\sigma_0}a_jA(t_0)e_j\Big\| \geq \frac{1}{16\sqrt{2}}\Big(\sum_{j\in\sigma_0}|a_j|^2\Big)^{1/2} \geq \frac{1}{32}\Big(\sum_{j\in\sigma_0}|a_j|^2\Big)^{1/2}.\] 
By using $\|A(t_0) - A(t)\| \leq \varepsilon = 1/32-1/33$ we obtain that if $U = \langle e_j:j\in \sigma_0\rangle$, then $m_{A(t)|_U} \geq 1/33$ for all $t\in[t_{-1},t_1]$ (see the last part of Lemma \ref{compactification}).

We now focus on the interval $[t_0, \infty)$ as a symmetric argument can be applied to $(- \infty, t_0]$. In reality, an extra stitching needs to be performed at $t_0$ to concatenate the two solutions. We omit this as it is essentially contained in the ensuing argument.

Iteratively applying Theorem \ref{BTbigthm}, for $i \ge 0$, we can obtain the sets with following properties:
\begin{enumerate}[leftmargin=19pt,label=(\roman*)]

    \item \label{set1} $\sigma_{i+1}\subset\{1,\ldots,n\}\setminus\sigma_i$  with  $|\sigma_{i+1}| = \lceil d_2n/{\Lambda^2}\rceil \leq d_1 n/{\Lambda^{2}}$ such that, for $U = \langle e_j:j\in \sigma_{i+1}\rangle$, we have $m_{A(t)|_U} \ge 1/33$ for $t \in [t_i,t_{i+2}]$.
    
    \item \label{set2} $\dtilde \sigma_{i+1}\subset\sigma_{i+1}$ and $\stilde \tau_i\subset\sigma_i$  with $|\dtilde \sigma_{i+1}| = |\stilde \tau_{i}| = \lceil d_3 n/\Lambda^4 \rceil$ such that, for $U=\langle e_j : j \in \stilde \tau_i \cup \dtilde \sigma_{i+1} \rangle$, we have $m_{A(t)|_U} \ge 1/33$ for $t \in [t_i,t_{i+1}]$.

\end{enumerate}
Indeed, suppose $\sigma_i$  has been constructed with $|\sigma_i| = \lceil d_2n/{\Lambda^2}\rceil \leq d_1 n/{\Lambda^{2}}$ such that, for $U = \langle e_j:j\in \sigma_{i}\rangle$, we have $m_{A(t)|_U} \ge 1/33$ for $t \in [t_i,t_{i+2}]$. To obtain the sets $\sigma_{i+1}$, $\stilde\tau_i$ and $\dtilde\sigma_{i+1}$, we apply Theorem \ref{BTbigthm} to the matrix $A(t_{i+1})$ and the set $\sigma_i$. Because $|\sigma_i|\leq d_1n/\Lambda^2$, we first get $\sigma_{i+1}\subset\{1,\ldots,n\}$ with $|\sigma_{i+1}| \geq d_2n/\Lambda^2$ (which, after truncating, we may assume satisfies $|\sigma_{i+1}| = \lceil d_2n/\Lambda^2\rceil$) such that, for any choice of coefficients $\{a_j\}_{j\in\sigma_i\cup\sigma_{i+1}}$,
\[\Big\|\sum_{j\in\sigma_i\cup\sigma_{i+1}}a_jA(t_{i+1})e_j\Big\| \geq \frac{1}{32}\Big(\sum_{j\in\sigma_{i+1}}|a_j|^2\Big)^{1/2}.\] 
Using $\|A(t_{i+1}) - A(t)\| \leq \varepsilon = 1/32-1/33$ we obtain that,  if $U = \langle e_j:j\in \sigma_{i+1}\rangle$, then $m_{A(t)|_U} \geq 1/33$ for all $t\in[t_{i},t_{i+2}]$.  Since $|\sigma_i| = \lceil d_2n/\Lambda^2\rceil \geq d_2n/\Lambda^2$, the second part of Theorem \ref{BTbigthm} yields $\stilde \tau_i \subset \sigma_i$ \text{ and } $\dtilde \sigma_{i+1}\subset\sigma_{i+1}$ with $|\stilde\tau_i|,|\dtilde \sigma_{i+1}|\geq d_3n/\Lambda^4$ (which after truncating we may assume $|\stilde\tau_i| = |\dtilde \sigma_{i+1}| = \lceil d_3n/\Lambda^4\rceil$) such that for any choice of coefficients $\{a_j\}_{j\in \stilde\tau_i\cup\dtilde\sigma_{i+1}}$ we have
\[\Big\|\sum_{j\in\stilde\tau_i\cup\dtilde\sigma_{i+1}}a_jA(t_{i+1})e_j\Big\| \geq \frac{1}{32}\Big(\sum_{j\in\stilde\tau_i\cup\dtilde\sigma_{i+1}}|a_j|^2\Big)^{1/2}.\] 
By using $\|A(t_{i+1}) - A(t)\| \leq \varepsilon = 1/32-1/33$ we obtain that,  if $U = \langle e_j:j\in \stilde\tau_i\cup\dtilde\sigma_{i+1}\rangle$, then $m_{A(t)|_U} \geq 1/33$ for all $t\in[t_{i},t_{i+1}]$.

To construct our continuous collection of subspaces, consider the interval $[t_i, t_{i+1}]$.  Using $\langle e_j : j \in \stilde \tau_i \rangle$ at $t_i$ and $\langle e_j : j \in \dtilde \sigma_{i+1} \rangle$ at $t_{i+1}$, we will invoke Lemma \ref{columnswitch} to construct a continuous collection of subspaces $\{U(t)\}_{t \in [t_i,t_{i+1}]}$, that has dimension $\lceil d_3 n/\Lambda^4\rceil$ for every $i\geq 0$. A diagram of the construction has been provided below for the reader's convenience:

\begin{center}
\vspace{0.2in}

\tikzset{every picture/.style={line width=0.75pt}} %set default line width to 0.75pt        

\begin{tikzpicture}[x=0.75pt,y=0.75pt,yscale=-1,xscale=1]
%uncomment if require: \path (0,300); %set diagram left start at 0, and has height of 300

%Straight Lines [id:da2259536654547598] 
\draw    (121.54,198.49) -- (520.34,199.17) ;
\draw [shift={(520.34,199.17)}, rotate = 180.1] [color={rgb, 255:red, 0; green, 0; blue, 0 }  ][line width=0.75]    (0,5.59) -- (0,-5.59)   ;
\draw [shift={(121.54,198.49)}, rotate = 180.1] [color={rgb, 255:red, 0; green, 0; blue, 0 }  ][line width=0.75]    (0,5.59) -- (0,-5.59)   ;
%Straight Lines [id:da009228690256545491] 
\draw    (320.95,192.93) -- (320.95,204.11) ;
%Straight Lines [id:da5400889747680708] 
\draw  [dash pattern={on 4.5pt off 4.5pt}]  (136,48.33) -- (294,48.33) ;
\draw [shift={(296,48.33)}, rotate = 180] [color={rgb, 255:red, 0; green, 0; blue, 0 }  ][line width=0.75]    (10.93,-3.29) .. controls (6.95,-1.4) and (3.31,-0.3) .. (0,0) .. controls (3.31,0.3) and (6.95,1.4) .. (10.93,3.29)   ;
%Straight Lines [id:da8436364382055237] 
\draw  [dash pattern={on 4.5pt off 4.5pt}]  (320.5,60.33) -- (136.86,132.93) ;
\draw [shift={(135,133.67)}, rotate = 338.43] [color={rgb, 255:red, 0; green, 0; blue, 0 }  ][line width=0.75]    (10.93,-3.29) .. controls (6.95,-1.4) and (3.31,-0.3) .. (0,0) .. controls (3.31,0.3) and (6.95,1.4) .. (10.93,3.29)   ;
%Straight Lines [id:da9569100845351919] 
\draw  [dash pattern={on 4.5pt off 4.5pt}]  (339,48.33) -- (489,48.33) ;
\draw [shift={(491,48.33)}, rotate = 180] [color={rgb, 255:red, 0; green, 0; blue, 0 }  ][line width=0.75]    (10.93,-3.29) .. controls (6.95,-1.4) and (3.31,-0.3) .. (0,0) .. controls (3.31,0.3) and (6.95,1.4) .. (10.93,3.29)   ;
%Straight Lines [id:da1269131452737018] 
\draw  [dash pattern={on 4.5pt off 4.5pt}]  (512.5,61.33) -- (342.36,128.6) ;
\draw [shift={(340.5,129.33)}, rotate = 338.43] [color={rgb, 255:red, 0; green, 0; blue, 0 }  ][line width=0.75]    (10.93,-3.29) .. controls (6.95,-1.4) and (3.31,-0.3) .. (0,0) .. controls (3.31,0.3) and (6.95,1.4) .. (10.93,3.29)   ;
%Straight Lines [id:da5242714031979572] 
\draw  [dash pattern={on 0.84pt off 2.51pt}]  (141.47,142.69) -- (288.53,168.98) ;
\draw [shift={(290.5,169.33)}, rotate = 190.14] [color={rgb, 255:red, 0; green, 0; blue, 0 }  ][line width=0.75]    (10.93,-3.29) .. controls (6.95,-1.4) and (3.31,-0.3) .. (0,0) .. controls (3.31,0.3) and (6.95,1.4) .. (10.93,3.29)   ;
\draw [shift={(139.5,142.33)}, rotate = 10.14] [color={rgb, 255:red, 0; green, 0; blue, 0 }  ][line width=0.75]    (10.93,-3.29) .. controls (6.95,-1.4) and (3.31,-0.3) .. (0,0) .. controls (3.31,0.3) and (6.95,1.4) .. (10.93,3.29)   ;
%Straight Lines [id:da10525463087127496] 
\draw  [dash pattern={on 0.84pt off 2.51pt}]  (342.47,139.69) -- (489.53,165.98) ;
\draw [shift={(491.5,166.33)}, rotate = 190.14] [color={rgb, 255:red, 0; green, 0; blue, 0 }  ][line width=0.75]    (10.93,-3.29) .. controls (6.95,-1.4) and (3.31,-0.3) .. (0,0) .. controls (3.31,0.3) and (6.95,1.4) .. (10.93,3.29)   ;
\draw [shift={(340.5,139.33)}, rotate = 10.14] [color={rgb, 255:red, 0; green, 0; blue, 0 }  ][line width=0.75]    (10.93,-3.29) .. controls (6.95,-1.4) and (3.31,-0.3) .. (0,0) .. controls (3.31,0.3) and (6.95,1.4) .. (10.93,3.29)   ;
%Straight Lines [id:da5561269587430508] 
\draw  [dash pattern={on 0.75pt off 0.75pt}]  (323,145.33) .. controls (324.67,147) and (324.67,148.66) .. (323,150.33) .. controls (321.33,152) and (321.33,153.66) .. (323,155.33) .. controls (324.67,157) and (324.67,158.66) .. (323,160.33) -- (323,164.33) -- (323,164.33)(320,145.33) .. controls (321.67,147) and (321.67,148.66) .. (320,150.33) .. controls (318.33,152) and (318.33,153.66) .. (320,155.33) .. controls (321.67,157) and (321.67,158.66) .. (320,160.33) -- (320,164.33) -- (320,164.33) ;

% Text Node
\draw (112.96,39) node [anchor=north west][inner sep=0.75pt]    {$\sigma _{i}$};
% Text Node
\draw (304.78,39) node [anchor=north west][inner sep=0.75pt]    {$\sigma _{i}{}_{+}{}_{1}$};
% Text Node
\draw (495.02,39) node [anchor=north west][inner sep=0.75pt]    {$\sigma _{i}{}_{+}{}_{2}$};
% Text Node
\draw (114,130) node [anchor=north west][inner sep=0.75pt]    {$\tau _{i}$};
% Text Node
\draw (303,130) node [anchor=north west][inner sep=0.75pt]    {$\tau _{i}{}_{+}{}_{1}$};
% Text Node
\draw (304,166) node [anchor=north west][inner sep=0.75pt]    {$\sigma _{i}{}_{+}{}_{1}$};
% Text Node
\draw (505,166) node [anchor=north west][inner sep=0.75pt]    {$\sigma _{i}{}_{+}{}_{2}$};
% Text Node
\draw (114,125) node [anchor=north west][inner sep=0.75pt]    {$\widetilde{\ \ }$};
% Text Node
\draw (304,125) node [anchor=north west][inner sep=0.75pt]    {$\widetilde{\ \ }$};
% Text Node
\draw (305,162.4) node [anchor=north west][inner sep=0.75pt]    {$\widetilde{\ \ }$};
% Text Node
\draw (305,159.4) node [anchor=north west][inner sep=0.75pt]    {$\widetilde{\ \ }$};
% Text Node
\draw (505,160) node [anchor=north west][inner sep=0.75pt]    {$\widetilde{\ \ }$};
% Text Node
\draw (505,162) node [anchor=north west][inner sep=0.75pt]    {$\widetilde{\ \ }$};
% Text Node
\draw (116,210.4) node [anchor=north west][inner sep=0.75pt]    {$t_{i}$};
% Text Node
\draw (308,210.4) node [anchor=north west][inner sep=0.75pt]    {$t_{i+1}$};
% Text Node
\draw (506,206.4) node [anchor=north west][inner sep=0.75pt]    {$t_{i}{}_{+}{}_{2}$};

\end{tikzpicture}
\vspace{0.2in}
\end{center}

As indicated on the diagram there is no reason to suppose $\stilde \tau_{i+1} = \dtilde \sigma_{i+1}$, so we must devise a way to ``stitch'' together the collection of subspaces at each endpoint, in order to construct a continuous collection of subspaces $\{U(t)\}_{t \in \mathbb{R}}$. We choose an $\eta_i \in (0, \frac{t_{i+1}-t_i}{2})$ and allow for another instance of ``stitching'' in the interval $[t_{i+1} - \eta_i, t_{i+1})$. This allows for us to continuously switch between $[t_i,t_{i+1}]$ and $[t_{i+1},t_{i+2}]$.

It is necessary to observe here that both $\stilde \tau_{i+1}$ and $\dtilde \sigma_{i+1}$ are subsets of $\sigma_{i+1}$. Thus the minimal stretch property holds for $U=\langle e_j : j \in \stilde \tau_{i+1} \cup \dtilde \sigma_{i+1} \rangle.$ We would like to switch from $\langle e_j : j \in \dtilde \sigma_{i+1} \rangle$ at $t_{i-1} -\eta_i$ to $\langle e_j:j \in \stilde \tau_{i+1} \rangle$ at $t_{i+1}$. To do this, it suffices to choose a new subset $\xi_{i+1}\subset \sigma_{i+1}\setminus(\stilde\tau_{i+1}\cup\dtilde\sigma_{i+1})$ with $|\xi_{i+1}| = |\stilde\tau_{i+1}| = |\dtilde\sigma_{i+1}| = \lceil d_3n/\Lambda^4\rceil$. This is possible because
\[|\stilde\tau_{i+1}\cup\dtilde\sigma_{i+1}| \leq 2\Big\lceil d_3\frac{n}{\Lambda^4}\Big\rceil  < \frac{2}{3}\Big\lceil d_2\frac{n}{\Lambda^2}\Big\rceil = \frac{2}{3}|\sigma_{i+1}|\quad\text{(by \eqref{onesixth}).}\]

We can now switch between the following subspaces for each $[t_i, t_{i+1}]$:
\begin{align*}
    \text{At } t_i &: \text{Begin with } U(t_i)= \langle e_j : j \in \stilde \tau_i \rangle.\\
    \text{At } t_{i+1} - \eta_i &: \text{Switch to  }U(t_{i+1}-\eta_i)= \langle e_j : j \in \dtilde \sigma_{i+1} \rangle \text{ using Lemma \ref{columnswitch}.}\\
        \text{At } t_{i+1} - \eta_i/2 &: \text{Switch to  }U(t_{i+1}-\eta_i/2)= \langle e_j : j \in \xi_i \rangle \text{ using Lemma \ref{columnswitch}.}\\
    \text{At } t_{i+1} &: \text{Switch to  }U(t_{i+1})= \langle e_j : j \in \stilde \tau_{i+1} \rangle \text{ using Lemma \ref{columnswitch}.} \qedhere
\end{align*}
\end{proof}

\section*{Acknowledgements}

The fourth and fifth authors are greatly indebted to Professors Alfonso Gracia-Saz (1976-2021) and Inder Bir Passi (1939-2021), without whose unfaltering support and encouragement the authors would never have considered continued studies in mathematics and to whom far more is owed than can be expressed. This paper is dedicated to their memories.

\bibliographystyle{plain}

\bibliography{bibliography}

\end{document}